\PassOptionsToPackage{pdfpagelabels=false}{hyperref}
\pdfoutput=1
\documentclass[copyright,creativecommons]{eptcs}
\usepackage{graphicx}
\usepackage{cmll}
\usepackage{amsmath,amsthm,amssymb}
\usepackage{mathtools}
\usepackage[UKenglish]{babel}
\usepackage[english=british]{csquotes}
\usepackage[compact]{titlesec}
\makeatletter
\let\c@author\relax
\makeatother
\usepackage[giveninits=true,url=false,date=year,dashed=false,bibstyle=authoryear]{biblatex}
\makeatletter
\input{numeric.bbx}
\makeatother
\bibliography{references}
\usepackage{cleveref}
\usepackage{adjustbox}
\usepackage[autosize,tikz]{dot2texi}
\usepackage[margin=10pt]{xsavebox}

\usepackage{rotating}
\allowdisplaybreaks[1]

\newtheorem{corollary}{Corollary}
\newtheorem{lemma}{Lemma}
\newtheorem{proposition}{Proposition}
\newtheorem{theorem}{Theorem}
\theoremstyle{definition}
\newtheorem{definition}{Definition}
\theoremstyle{remark}
\newtheorem{example}{Example}
\newtheorem{remark}{Remark}

\usepackage{tikz,pgfkeys}
\newcommand{\tinymorphism}[2][]{\smash{\ensuremath{\tikz[#1]{\scalecobordisms{0.35}\node[#2] (m) {};}}}}
\newcommand{\tinymorphisms}[1]{\ensuremath{\tikz[baseline=(current bounding box.center)]{\scalecobordisms{0.35}#1}}}

\usepackage{Cobordism}
\scalecobordisms{0.65}

\makeatletter
\def\calign@preamble{%
   &\hfil\strut@
    \setboxz@h{\@lign$\m@th\displaystyle{##}$}%
    \ifmeasuring@\savefieldlength@\fi
    \set@field
    \hfil
    \tabskip\alignsep@
}
\let\cmeasure@\measure@
\patchcmd\cmeasure@{\divide\@tempcntb\tw@}{}{}{}
\patchcmd\cmeasure@{\divide\@tempcntb\tw@}{}{}{}
\patchcmd\cmeasure@{\ifodd\maxfields@
  \global\advance\maxfields@\@ne
  \fi}{}{}{}
\newenvironment{calign}
{%
  \let\align@preamble\calign@preamble
  \let\measure@\cmeasure@
  \align
}
{%
  \endalign
}
\makeatother

\sloppy
\DeclareMathOperator{\Set}{\mathbf{Set}}
\DeclareMathOperator{\Rel}{\mathbf{Rel}}
\DeclareMathOperator{\Vect}{\mathbf{Vect}}
\DeclareMathOperator{\Cat}{\mathbf{Cat}}
\DeclareMathOperator{\Prof}{\mathbf{Prof}}
\DeclareMathOperator{\Tr}{\mathsf{Tr}}
\DeclareMathOperator{\NTr}{\mathsf{NTr}}
\DeclareMathOperator{\RTr}{\otimes \mathsf{Tr}_R}
\DeclareMathOperator{\RTrPar}{\invamp \mathsf{Tr}_R}
\DeclareMathOperator{\LTr}{\invamp \mathsf{Tr}_L}
\DeclareMathOperator{\LTrTensor}{\otimes \mathsf{Tr}_L}
\newcommand{\cat}[1]{\mathcal{#1}}

\newcommand{\op}[1]{{#1}^\mathrm{op}}
\newcommand{\ra}[1]{{#1}^\mathsf{RA}}
\title{Traced monoidal categories as algebraic structures in $\Prof$}
\author{
Nick Hu
  \institute{Department of Computer Science\\ University of Oxford\\
    Oxford, United Kingdom}
  \email{nick.hu@cs.ox.ac.uk}
\and
Jamie Vicary
  \institute{Department of Computer Science and Technology\\University of Cambridge\\
    Cambridge, United Kingdom}
  \email{jamie.vicary@cl.cam.ac.uk}
}

\begin{document}
\maketitle
\begin{abstract}
  We define a traced pseudomonoid as a pseudomonoid in a monoidal bicategory equipped with extra structure, giving a new characterisation of Cauchy complete traced monoidal categories as algebraic structures in $\Prof$, the monoidal bicategory of profunctors. This enables reasoning about the trace using the graphical calculus for monoidal bicategories, which we illustrate in detail. We apply our techniques to study traced $*$-autonomous categories, proving a new equivalence result between the left $\otimes$-trace and the right $\invamp$-trace, and describing a new condition under which traced $*$-autonomous categories become autonomous.
\end{abstract}

\begin{xlrbox}{id}
  \begin{tikzpicture}[baseline=(current bounding box.center)]
    \idstack{1}
  \end{tikzpicture}
\end{xlrbox}

\begin{xlrbox}{tallid}
  \begin{tikzpicture}[baseline=(current bounding box.center)]
    \idstack{2}
  \end{tikzpicture}
\end{xlrbox}

\begin{xlrbox}{verytallid}
  \begin{tikzpicture}[baseline=(current bounding box.center)]
    \idstack{3}
  \end{tikzpicture}
\end{xlrbox}

\begin{xlrbox}{veryverytallid}
  \begin{tikzpicture}[baseline=(current bounding box.center)]
    \idstack{4}
  \end{tikzpicture}
\end{xlrbox}

\begin{xlrbox}{double-id}
  \begin{tikzpicture}[baseline=(current bounding box.center)]
    \node[identity] (i1) {};
    \node[identity, xshift=-\cobwidth-\cobgap, anchor=top] (i2) at (i1.top) {};
  \end{tikzpicture}
\end{xlrbox}

\begin{xlrbox}{double-tallid}
  \begin{tikzpicture}[baseline=(current bounding box.center)]
    \node[identity] (i1) {};
    \node[identity, xshift=-\cobwidth-\cobgap, anchor=top] (i2) at (i1.top) {};
    \idstack[anchor=i1.bottom, direction=down]{1}
    \idstack[anchor=i2.bottom, direction=down]{1}
  \end{tikzpicture}
\end{xlrbox}

\begin{xlrbox}{double-verytallid}
  \begin{tikzpicture}[baseline=(current bounding box.center)]
    \node[identity] (i1) {};
    \node[identity, xshift=-\cobwidth-\cobgap, anchor=top] (i2) at (i1.top) {};
    \idstack[anchor=i1.bottom, direction=down]{2}
    \idstack[anchor=i2.bottom, direction=down]{2}
  \end{tikzpicture}
\end{xlrbox}

\begin{xlrbox}{monoid-white}
  \begin{tikzpicture}[baseline=(current bounding box.center)]
    \node[mult] {};
  \end{tikzpicture}
\end{xlrbox}

\begin{xlrbox}{tinymonoid-white}
  $\tinymorphism{mult}$
\end{xlrbox}

\begin{xlrbox}{monoid-black}
  \begin{tikzpicture}[baseline=(current bounding box.center)]
    \node[mult, dot=black] {};
  \end{tikzpicture}
\end{xlrbox}

\begin{xlrbox}{tinymonoid-black}
  \tinymorphism{mult, dot=black}
\end{xlrbox}

\begin{xlrbox}{comonoid-white}
  \begin{tikzpicture}[baseline=(current bounding box.center)]
    \node[comult] {};
  \end{tikzpicture}
\end{xlrbox}

\begin{xlrbox}{tinycomonoid-white}
  \tinymorphism{comult}
\end{xlrbox}

\begin{xlrbox}{comonoid-black}
  \begin{tikzpicture}[baseline=(current bounding box.center)]
    \node[comult, dot=black] {};
  \end{tikzpicture}
\end{xlrbox}

\begin{xlrbox}{tinycomonoid-black}
  \tinymorphism{comult, dot=black}
\end{xlrbox}

\begin{xlrbox}{unit-white}
  \begin{tikzpicture}[baseline=(current bounding box.center)]
    \node[unit] {};
  \end{tikzpicture}
\end{xlrbox}

\begin{xlrbox}{tinyunit-white}
   \tinymorphism[baseline={([yshift=-0.25\cobheight] m.center)}]{unit}
\end{xlrbox}

\begin{xlrbox}{unit-black}
  \begin{tikzpicture}[baseline=(current bounding box.center)]
    \node[unit, dot=black] {};
  \end{tikzpicture}
\end{xlrbox}

\begin{xlrbox}{tinyunit-black}
   \tinymorphism[baseline={([yshift=-0.25\cobheight] m.center)}]{unit, dot=black}
\end{xlrbox}

\begin{xlrbox}{counit-white}
  \begin{tikzpicture}[baseline=(current bounding box.center)]
    \node[counit] {};
  \end{tikzpicture}
\end{xlrbox}

\begin{xlrbox}{tinycounit-white}
  \tinymorphism{counit}
\end{xlrbox}

\begin{xlrbox}{counit-black}
  \begin{tikzpicture}[baseline=(current bounding box.center)]
    \node[counit, dot=black] {};
  \end{tikzpicture}
\end{xlrbox}

\begin{xlrbox}{tinycounit-black}
  \tinymorphism{counit, dot=black}
\end{xlrbox}

\section{Introduction}\label{sec:intro}

One way to interpret category theory is as a theory of \emph{systems} and
\emph{processes}, whereby monoidal structure naturally lends itself to enable
processes to be juxtaposed in parallel. Following this analogy, the
presence of a \emph{trace} structure embodies the notion of feedback:
some output of a process is directly fed back in to one of its inputs. For
instance, if we think of processes as programs, then feedback is some kind of
recursion \autocite{hasegawaRecursionCyclicSharing1997}. This becomes clearer
still when we consider how tracing is depicted in the standard graphical
calculus \autocite[\S~5]{selingerSurveyGraphicalLanguages2009}, as follows:
\begin{equation}\label{eq:trace}
\begin{tikzpicture}[baseline=(current bounding box.center)]
      \begin{scope}[internal string scope]
        \node [tiny label, draw=black, text=black] (f) {$f$};
        \node [xshift=\cobgap] (f') at (f) {};
        \node [black, yshift=\toff+0.5\cobheight, xshift=-\cobgap] at (f.north) {\tiny $A$};
        \node [black, yshift=-\boff-0.5\cobheight, xshift=-\cobgap] at (f.south) {\tiny $B$};
        \node [black, yshift=\toff+0.5\cobheight, xshift=\cobgap] at (f.north) {\tiny $X$};
        \node [black, yshift=-\boff-0.5\cobheight, xshift=\cobgap] at (f.south) {\tiny $X$};
        \draw [double=black] ([yshift=\toff+0.5\cobheight, xshift=-\cobgap] f.center)
          to [out=down, in=130] (f.north west)
          to (f.south west)
          to [out=-130, in=up] ([yshift=-\boff-0.5\cobheight, xshift=-\cobgap] f.center);
        \draw [double=black] ([yshift=\toff+0.5\cobheight, xshift=\cobgap] f.center)
          to [out=down, in=40] (f.north east)
          to (f.south east)
          to [out=-40, in=up] ([yshift=-\boff-0.5\cobheight, xshift=\cobgap] f.center);
      \end{scope}
    \end{tikzpicture}
    \leadsto
    \begin{tikzpicture}[baseline=(current bounding box.center), decoration={
        markings, mark=at position 0.475 with {\arrowreversed[black]{Stealth[length=1mm]}}
      }]
      \begin{scope}[internal string scope]
        \node [tiny label, draw=black, text=black] (f) {$f$};
        \node [xshift=\cobgap] (f') at (f) {};
        \node [black, xshift=3\cobgap] at (f) {\tiny $X$};
        \node [black, yshift=\toff+0.5\cobheight, xshift=-\cobgap] at (f.north) {\tiny $A$};
        \node [black, yshift=-\boff-0.5\cobheight, xshift=-\cobgap] at (f.south) {\tiny $B$};
        \draw [double=black] ([yshift=\toff+0.5\cobheight, xshift=-\cobgap] f.center)
          to [out=down, in=130] (f.north west)
          to (f.south west)
          to [out=-130, in=up] ([yshift=-\boff-0.5\cobheight, xshift=-\cobgap] f.center);
        \draw [double=black, postaction={decorate}] (f.center)
          to [out=40, in=down] ([yshift=0.5\cobheight, xshift=\cobgap] f.center)
          arc (180:0:0.5\cobgap)
          to [out=down, in=up] ([yshift=-0.5\cobheight, xshift=2\cobgap] f.center)
          arc (0:-180:0.5\cobgap)
          to [out=up, in=-40] (f.center);
      \end{scope}
    \end{tikzpicture}
\end{equation}
Many important algebraic structures which are typically defined
as sets-with-structure, like monoids, groups, or rings, may be described abstractly
as algebraic structure, which when
interpreted in $\Set$ yield the original definition. We call this process
\emph{externalisation}. The external version of a definition can then be
reinterpreted in a setting other than $\Set$ to expose meaningful connections
between known structures, or to generate new ones. For instance, a monoid in
$\Set$ is a standard monoid, but a monoid in $\Vect$ is a unital algebra,
and in $\Cat$ it is a strict monoidal category. Externalisation formalises the
relationship between these structures.

In this article, we externalise the $1$-categorical notion of traced monoidal
category, giving a new external definition of \emph{traced pseudomonoid}. We show that, when
interpreted in $\Prof$, the monoidal bicategory of categories and profunctors, this is equivalent to the standard definition of traced monoidal category. While the traditional definition of traced monoidal category has five separate axiom families, our traced pseudomonoid only has three, because two of the axioms become subsumed into the technology of $\Prof$. In this sense our externalised theory is a simpler than the traditional approach.

We make substantial use of the graphical calculus for compact closed
bicategories, categorifying the way one might use a PROP when working in a
symmetric monoidal theory \autocite{lackComposingPROPs2004}. $\Prof$
additionally admits a special string diagram calculus of \emph{internal string
diagrams} --- string diagrams \enquote{inside} string diagrams --- which we use
extensively to prove our results.

We apply our framework to derive new proofs of known facts about traced
monoidal categories in an entirely diagrammatic and synthetic way. For
instance, every braided autonomous category admits a trace, which we reduce to
the presence of a certain isomorphism. Following this, we proceed to analyse
the interaction between tracing and $*$-autonomous structure. We show that on a
$*$-autonomous category, a right $\otimes$-trace and a left $\invamp$-trace are
equivalent. We also derive an interesting sufficient condition for a traced
$*$-autonomous category to be compact closed, extending previous work of
\textcite{hajgatoTracedAutonomousCategories2013} which handled the symmetric
case.

\subsection{Related work}

Our traced pseudomonoid is a sort of \emph{categorification} of the standard
categorical notion of trace, as described by
\textcite{joyalTracedMonoidalCategories1996}. The idea of bicategories
\autocite{benabouIntroductionBicategories1967} as a formal arena for the study
of categories comes from \textcite{grayFormalCategoryTheory1974}, however an
issue which arises is that the obvious arena $\Cat$ preserves too little
information to study certain phenomena. Profunctors are one way to resolve this
\autocite{woodAbstractProArrows1982}, and furthermore they also naturally allow
for the diagrammatic methods we wish to employ. $\Prof$ is to $\Cat$ what
$\Rel$ is to $\Set$ \textcite[Example~5.1.5]{loregianCoendCalculus2019}.

Within the same framework, certain Frobenius pseudomonoids categorify the
notion of $*$-autonomous categories
\autocite{barrAutonomousCategories1979},
as first studied by \textcite{streetFrobeniusMonadsPseudomonoids2004}. We use
the term \enquote{$*$-autonomous} for the non-symmetric version, as described
by \textcite{barrNonsymmetricAutonomousCategories1995}. In a symmetric monoidal
category, the notions of trace and $*$-autonomous interact: a traced symmetric
$*$-autonomous monoidal category is compact closed
\autocite{hajgatoTracedAutonomousCategories2013}. An obvious conjecture is that
a traced $*$-autonomous category is autonomous (with left and right duals for
all objects), and in the last section of our paper we give an analysis of this
problem, deriving a sufficient condition for this result to hold.

\subsection{Outline}

In \Cref{sec:prof-string-diagrams}, we establish our technical background,
utilising the language of \emph{presentations}
\autocite[\S~2.10]{schommer-priesClassificationTwoDimensionalExtended2011} to
graphically represent different types of monoidal categories. Presentations
extending a pseudomonoid with right adjoints represent Cauchy complete monoidal
categories when interpreted in $\Prof$, and internal string diagrams
\autocite[\S~4]{bartlettModularCategoriesRepresentations2015} are also recalled
from existing literature.
\Cref{sec:traced-presentation} contains our main definition: the traced
pseudomonoid presentation. We show that its representations correspond exactly
to Cauchy complete traced monoidal categories
\autocite{joyalTracedMonoidalCategories1996}, using internal string diagrams as
our main proof technique.
\Cref{sec:autonomous-traced} illustrates using this framework that all Cauchy
complete braided autonomous categories are Cauchy complete traced.
\Cref{sec:*-autonomous-categories} concludes with a study on $*$-autonomous
categories, defined by the right-adjoint Frobenius pseudomonoid presentation
\autocite[\S~2.7]{dunnCoherenceFrobeniusPseudomonoids2016}, and their
interaction with tracing. We conjecture that every traced $*$-autonomous
category is autonomous, which is the non-symmetric generalisation of the result
of \textcite{hajgatoTracedAutonomousCategories2013}, and use our techniques to
give evidence for this conjecture.

\subsection{Acknowledgements}

The authors are grateful to Masahito Hasegawa for useful comments, and the
authors of \textcite{bartlettModularCategoriesRepresentations2015} for their
TikZ code for drawing internal string diagrams. The first author acknowledges
funding from the EPSRC [grant number EP/R513295/1].

\section{String diagrams and the bicategory of profunctors}\label{sec:prof-string-diagrams}

\subsection{Introduction}

In this section, we establish the definition of $\Prof$, and recall some of its
important properties. We also assume familiarity with \emph{string diagrams}
for compact closed categories, of the type described by
\textcite[\S~4.8]{selingerSurveyGraphicalLanguages2009}. There are two
main differences with our string diagrams:
\begin{enumerate}
  \item our string diagram convention is from bottom to top, rather than left
    to right;
  \item our setting is \emph{bicategorical}, which we view in projection. This
    means that, as usual, $0$-morphisms are represented by wire
    \emph{colourings}, and $1$-morphisms are represented by $2$-dimensional
    \emph{tiles} with some number of incoming wires and some number of outgoing
    wires, but in addition there are $2$-morphisms which are represented by
    wire-boundary-preserving (globular) \emph{rewrites} which act locally. In
    this context, the equational theory states that certain sequences of
    rewrites agree, and for each tile there is a \enquote{do nothing} rewrite
    corresponding to the identity $2$-morphism.
\end{enumerate}
This is the diagrammatic calculus of
\textcite{bartlettQuasistrictSymmetricMonoidal2014} enhanced with compact
structure, which means that $1$-morphisms may be rotated, changing the
orientation of their wires appropriately:
\[
\xusebox{monoid-white}
    \leadsto
    \begin{tikzpicture}[baseline=(current bounding box.center)]
      \node[comult, down] (c) {};
      \node[2Dcap, down, anchor=rightleg, span=0.5] (cap) at (c.leftleg) {};
      \node[2Dcap, down, anchor=rightleg, span=2] (cap') at (c.rightleg) {};
      \node[2Dcup, down, anchor=leftleg] (cup) at (c.bottom) {};
      \idstack[anchor=cap.leftleg, direction=down, orientation=up]{2}
      \idstack[anchor=cap'.leftleg, direction=down, orientation=up]{2}
      \idstack[anchor=cup.rightleg, direction=up, orientation=up]{2}
    \end{tikzpicture}
\]

Sometimes we will use colour-coded boxes to signal the local site at which a
$2$-morphism is being applied to aid the reader (for an example, see Definition~\ref{def:pseudomonoidpresentation}.) Additionally, we often use the
same symbol to denote a $2$-morphism, its inverse, or its adjoint mate; context
will disambiguate, but e.g.\ any $2$-morphism labelled $\alpha$ is morally the
associator move which type-checks, without significant additional nuance.

\begin{definition}[Bicategory of Profunctors {\autocite[Proposition~7.8.2]{borceuxHandbookCategoricalAlgebra1994}}]
  \emph{$\Prof$} is the bicategory of categories, profunctors, and natural
  transformations.
\end{definition}

\noindent
Additionally, $\Prof$ is compact closed in the sense of
\textcite[\S~2]{stayCompactClosedBicategories2013}, with the dual of $\cat{C}$ given by
$\op{\cat{C}}$; the structural information of this bicategory (the identity
profunctor, the symmetry, the co/unit of the compact structure, etc.) is given
by variations on the Hom-profunctor $\cat{C}(-, =)\colon \op{\cat{C}} \times
\cat{C} \to \Set$.

There exists an embedding theorem for $\Cat \to \Prof$.

\begin{lemma}\label[lemma]{prof-embedding}
  For each functor $F\colon \cat{C} \to \cat{D}$, there are associated
  profunctors $F_*\colon \op{\cat{C}} \times \cat{D} \to \Set$ and $F^*\colon
  \op{\cat{D}} \times \cat{C} \to \Set$ given by right and left actions on the
  Hom-functor $\cat{D}(-, =)$:
  \begin{equation*}
    F_* (d, c) = \cat{D}(F d, c),
    \quad
    F^* (c, d) = \cat{D}(c, F d).
  \end{equation*}
  Either mapping extends to an injective fully faithful pseudofunctor
  \autocite[Proposition~7.8.5]{borceuxHandbookCategoricalAlgebra1994}.
  Furthermore, $F^* \dashv F_*$ in $\Prof$
  \autocite[Proposition~7.9.1]{borceuxHandbookCategoricalAlgebra1994}.
\end{lemma}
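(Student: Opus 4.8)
The proof rests almost entirely on the co-Yoneda (density) lemma, and I would organise it around the three assertions in turn. First, $F_*$ is the composite of the hom-functor $\cat{D}(-,=) \colon \op{\cat{D}} \times \cat{D} \to \Set$ with $\op{F} \times \id{\cat{D}}$, hence a functor --- that is, a profunctor --- and $F^*$ is obtained symmetrically. To see that $F \mapsto F_*$ extends to a pseudofunctor, note that $(\id{\cat{C}})_* = \cat{C}(-,=)$ is literally the identity $1$-cell on $\cat{C}$, while for $F \colon \cat{C} \to \cat{D}$ and $G \colon \cat{D} \to \cat{E}$ the composite profunctor is $(c, e) \mapsto \int^{d \in \cat{D}} \cat{D}(Fc, d) \times \cat{E}(Gd, e)$, which the density lemma --- applied to the presheaf $\cat{E}(G-, e)$ --- identifies naturally with $\cat{E}(GFc, e) = (GF)_*(c, e)$. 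Taking these isomorphisms as the compositor and unitor, the pseudofunctor coherence axioms reduce to the Fubini theorem for coends together with naturality of the density isomorphism; the case of $F \mapsto F^*$ is dual.

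Both assignments are the identity on objects, so injectivity on objects is immediate and only local full faithfulness is at issue (injectivity on $1$-morphisms falls out of the same Yoneda computation). Given parallel functors $F, G \colon \cat{C} \to \cat{D}$, a $2$-morphism $F_* \Rightarrow G_*$ is a family of maps $\cat{D}(Fc, d) \to \cat{D}(Gc, d)$ natural in $(c, d)$; for each fixed $c$ the Yoneda lemma identifies the part natural in $d$ with a unique morphism $Gc \to Fc$, and naturality in $c$ unwinds to exactly the condition that these assemble into a natural transformation $G \Rightarrow F$. This correspondence is manifestly functorial, so the induced functor on hom-categories is an isomorphism onto its image; for $F^*$ the same argument, with the variances reversed, matches $2$-morphisms $F^* \Rightarrow G^*$ with natural transformations $F \Rightarrow G$.

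For the adjunction $F^* \dashv F_*$, the round-trip composite based at $\cat{C}$ is, by the density lemma, naturally isomorphic to the profunctor $(c, c') \mapsto \cat{D}(Fc, Fc')$, into which the identity profunctor $\cat{C}(-,=)$ maps via the action of $F$ on hom-sets; this supplies the unit. The round-trip composite based at $\cat{D}$, namely $(d, d') \mapsto \int^{c \in \cat{C}} \cat{D}(d, Fc) \times \cat{D}(Fc, d')$, carries the dinatural composition map onto $\cat{D}(d, d') = \id{\cat{D}}$, supplying the counit. Each of the two triangle identities then unwinds, after one further appeal to Fubini, to a single instance of the density lemma. I do not expect any conceptual difficulty here --- the statement is classical, hence the citation rather than a proof --- and the only step that genuinely needs care is the honest verification of the pseudofunctor coherence axioms and of the triangle identities through the explicit coends; everything else is formal bookkeeping.
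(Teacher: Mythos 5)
Your argument is correct, and the paper itself offers no proof of this lemma beyond the citations to Borceux (Propositions 7.8.2, 7.8.5 and 7.9.1), where the standard proof is exactly the coend/co-Yoneda computation you give: density supplies the compositors and the identifications $F_*\diamond F^* \cong \cat{D}(F-,F=)$ and $F^*\diamond F_*$, Yoneda gives local full faithfulness, and Fubini handles coherence and the triangle identities. The only cosmetic caveat is that the two embeddings differ in variance (as your own computation of $2$-cells $F_*\Rightarrow G_*$ corresponding to $G\Rightarrow F$ shows), which is worth stating explicitly when claiming both extend to pseudofunctors.
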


$F^*$ is called the covariant embedding of $F$, or also its
\emph{representation}, and $F_*$ is called the contravariant embedding, or
alternatively its \emph{corepresentation}.

\Cref{prof-embedding} justifies the following condition, which we shall make
heavy use of throughout.

\begin{theorem}[{\autocite[Theorem~7.9.3]{borceuxHandbookCategoricalAlgebra1994}}]\label[theorem]{cauchy}
  Given a small category $\cat{C}$, the following conditions are equivalent:
  \begin{enumerate}
    \item $\cat{C}$ is Cauchy complete;
    \item for every small category $\cat{D}$, a profunctor $\op{\cat{C}} \times
      \cat{D} \to \Set$ has a right adjoint if and only if it is isomorphic to
      the covariant embedding of a functor.
  \end{enumerate}
\end{theorem}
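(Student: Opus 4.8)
The plan is to pass through the standard dictionary identifying a profunctor $P\colon\op{\cat{C}}\times\cat{D}\to\Set$ with the functor $\check{P}\colon\cat{D}\to[\op{\cat{C}},\Set]$ sending $d$ to $P(-,d)$, under which the covariant embedding $G^*$ of a functor $G\colon\cat{D}\to\cat{C}$ corresponds to $G$ postcomposed with the Yoneda embedding, and composition of profunctors is computed by the usual coend formula. With this in hand, each implication becomes a short calculation together with \Cref{prof-embedding}. I also use freely that for ordinary categories \enquote{Cauchy complete} is the condition that every idempotent splits, and that presheaf categories have split idempotents.

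For $(1)\Rightarrow(2)$, one half of the biconditional is immediate and needs no hypothesis on $\cat{C}$: if $P\cong G^*$ then $P$ has the right adjoint $G_*$ by \Cref{prof-embedding}. For the converse, suppose $P\colon\op{\cat{C}}\times\cat{D}\to\Set$ has a right adjoint $Q$, with unit $\eta$ and counit $\epsilon$. Fix $d\in\cat{D}$; applying the $(d,d)$-component of $\eta$ to $\id{d}$ produces a point of the coend $(Q\circ P)(d,d)$, represented by an object $c_d\in\cat{C}$ together with elements $p_d\in P(c_d,d)$ and $q_d\in Q(d,c_d)$. Feeding $p_d$ and $q_d$ through $\epsilon$ yields natural transformations $\alpha\colon P(-,d)\to\cat{C}(-,c_d)$ and $\beta\colon\cat{C}(-,c_d)\to P(-,d)$; one of the triangle identities for $P\dashv Q$ forces $\beta\circ\alpha=\id{P(-,d)}$, while $\alpha\circ\beta=\cat{C}(-,e_d)$ for an endomorphism $e_d\colon c_d\to c_d$, which is then idempotent, and $P(-,d)$ is its splitting. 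Since $\cat{C}$ is Cauchy complete, $e_d$ splits \emph{in} $\cat{C}$, say through $Gd$; uniqueness of splittings gives $P(-,d)\cong\cat{C}(-,Gd)$, and full faithfulness of the Yoneda embedding both promotes $d\mapsto Gd$ to a functor $G\colon\cat{D}\to\cat{C}$ and makes this isomorphism natural in $d$, yielding $P\cong G^*$.

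For $(2)\Rightarrow(1)$ it is enough to split an arbitrary idempotent $e\colon c\to c$ in $\cat{C}$. Take $\cat{D}=\mathbf{1}$ and let $P$ be the splitting of the idempotent $\cat{C}(-,e)$ on the representable $\cat{C}(-,c)$ in $[\op{\cat{C}},\Set]$; concretely $P(x)=\{\,f\colon x\to c\mid ef=f\,\}$. Then the copresheaf $P^{\vee}(x)=\{\,g\colon c\to x\mid ge=g\,\}$ is a right adjoint to $P$ in $\Prof$: the unit picks out the class of $(e,e)$ in the relevant coend, the counit is composition $(g,f)\mapsto g\circ f$, and the two triangle identities collapse to the defining equations $ef=f$ and $ge=g$. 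Hence $P$ has a right adjoint, so by hypothesis $P$ is isomorphic to the covariant embedding of some functor $\mathbf{1}\to\cat{C}$, i.e.\ $P$ is representable, say $P\cong\cat{C}(-,c')$. Conjugating by this isomorphism the section and retraction exhibiting $P$ as a retract of $\cat{C}(-,c)$ gives maps $\cat{C}(-,c')\to\cat{C}(-,c)\to\cat{C}(-,c')$ composing to the identity; by the Yoneda lemma these are $\cat{C}(-,\sigma)$ and $\cat{C}(-,\rho)$ for morphisms $\sigma\colon c'\to c$ and $\rho\colon c\to c'$ satisfying $\rho\circ\sigma=\id{c'}$ and $\sigma\circ\rho=e$, which is a splitting of $e$. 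As $e$ was arbitrary, $\cat{C}$ is Cauchy complete.

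The one genuinely computational step is the converse half of $(1)\Rightarrow(2)$: reading the transformations $\alpha,\beta$ off the abstract adjunction and checking that the triangle identity really does force $\beta\circ\alpha=\id{P(-,d)}$. Everything else is formal once the profunctor/presheaf dictionary and \Cref{prof-embedding} are available.
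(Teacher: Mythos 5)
The paper does not prove this statement itself---it is quoted directly from Borceux (Theorem~7.9.3), so there is no internal proof to compare against. Your argument is correct and is essentially the standard one from that reference: characterising left-adjoint profunctors out of $\cat{D}=\mathbf{1}$ as retracts of representables, splitting the induced idempotent $e_d$ on $c_d$ objectwise, and using Yoneda full faithfulness to assemble $G$; the only cosmetic slip is that $\beta\colon\cat{C}(-,c_d)\to P(-,d)$ comes from the Yoneda transform of $p_d$ alone rather than from $\epsilon$, which does not affect the argument.
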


\noindent
Informally, this describes when a profunctor is \enquote{the same} as a
functor. More precisely, it allows us to capture the conditions where we can
treat functors as profunctors (and conversely) --- when some profunctor $P$ is
(isomorphic to) the representation of some functor $F$ --- i.e.\ it justifies the
move from doing formal category theory in $\Cat$ to $\Prof$. Thus we must
qualify that throughout this article, our object of study is Cauchy complete
categories. Note that every category admits a universal embedding into its
Cauchy completion via the Karoubi envelope construction
\autocite{borceuxCauchyCompletionCategory1986}.

\subsection{Planar monoidal categories}

We recall the definitions of the pseudomonoid presentation and its
right-adjoint analogue \autocite{dunnCoherenceFrobeniusPseudomonoids2016}.

\begin{definition}
\label{def:pseudomonoidpresentation}
  \begin{xlrbox}{associator-1}
    \begin{tikzpicture}[baseline=(current bounding box.center)]
      \node[mult] (m1) {};
      \node[mult, anchor=top] (m2) at (m1.leftleg) {};
      \node[identity, anchor=top] (i) at (m1.rightleg) {};
      \node[mult, anchor=top] (m3) at (m2.leftleg) {};
      \node[identity, anchor=top] at (i.bottom) {};
      \node[identity, anchor=top] at (m2.rightleg) {};
      \node[draw=red, dashed, fit=(m2.top) (m2.rightleg) (m3.leftleg)]{};
      \node[draw=blue, dashed, fit=(m1.top) (m2.leftleg) (m1.rightleg)]{};
    \end{tikzpicture}
  \end{xlrbox}

  \begin{xlrbox}{associator-2}
    \begin{tikzpicture}[baseline=(current bounding box.center)]
      \node[mult, span=1.5] (m1) {};
      \node[mult, anchor=top] (m2) at (m1.leftleg) {};
      \node[identity, anchor=top] (i) at (m1.rightleg) {};
      \node[mult, anchor=top] at (m2.rightleg) {};
      \node[identity, anchor=top] at (i.bottom) {};
      \node[identity, anchor=top] at (m2.leftleg) {};
      \node[draw=red, dashed, fit=(m1.top) (m1.rightleg) (m2.leftleg)]{};
    \end{tikzpicture}
  \end{xlrbox}

  \begin{xlrbox}{associator-3}
    \begin{tikzpicture}[baseline=(current bounding box.center)]
      \node[mult, span=1.5] (m1) {};
      \node[mult, anchor=top] (m2) at (m1.rightleg) {};
      \node[identity, anchor=top] (i) at (m1.leftleg) {};
      \node[mult, anchor=top] (m3) at (m2.leftleg) {};
      \node[identity, anchor=top] at (i.bottom) {};
      \node[identity, anchor=top] at (m2.rightleg) {};
      \node[draw=red, dashed, fit=(m2.top) (m2.rightleg) (m3.leftleg)]{};
    \end{tikzpicture}
  \end{xlrbox}

  \begin{xlrbox}{associator-4}
    \begin{tikzpicture}[baseline=(current bounding box.center)]
      \node[mult, span=1.5] (m1) {};
      \node[mult, anchor=top] (m2) at (m1.leftleg) {};
      \node[mult, anchor=top] at (m1.rightleg) {};
      \node[draw=blue, dashed, fit=(m1.top) (m2.leftleg) (m1.rightleg)]{};
    \end{tikzpicture}
  \end{xlrbox}

  \begin{xlrbox}{associator-5}
    \begin{tikzpicture}[baseline=(current bounding box.center)]
      \node[mult] (m1) {};
      \node[mult, anchor=top] (m2) at (m1.rightleg) {};
      \node[identity, anchor=top] (i) at (m1.leftleg) {};
      \node[mult, anchor=top] at (m2.rightleg) {};
      \node[identity, anchor=top] at (i.bottom) {};
      \node[identity, anchor=top] at (m2.leftleg) {};
    \end{tikzpicture}
  \end{xlrbox}

  \begin{xlrbox}{unitor-l}
    \begin{tikzpicture}[baseline=(current bounding box.center)]
      \node[mult] (m1) {};
      \node[mult, anchor=top] (m2) at (m1.rightleg) {};
      \node[identity, anchor=top] at (m2.rightleg) {};
      \node[unit, anchor=top] (u) at (m2.leftleg) {};
      \node[identity, anchor=top] (i) at (m1.leftleg) {};
      \node[identity, anchor=top] at (i.bottom) {};
      \node[draw=red, dashed, fit=(m2.top) (m2.rightleg) (u)]{};
    \end{tikzpicture}
  \end{xlrbox}

  \begin{xlrbox}{unitor-r}
    \begin{tikzpicture}[baseline=(current bounding box.center)]
      \node[mult] (m1) {};
      \node[mult, anchor=top] (m2) at (m1.leftleg) {};
      \node[identity, anchor=top] at (m2.leftleg) {};
      \node[unit, anchor=top] (u) at (m2.rightleg) {};
      \node[identity, anchor=top] (i) at (m1.rightleg) {};
      \node[identity, anchor=top] at (i.bottom) {};
      \node[draw=blue, dashed, fit=(m2.top) (m2.leftleg) (u)]{};
      \node[draw=red, dashed, fit=(m1.top) (m2.leftleg) (m1.rightleg)]{};
    \end{tikzpicture}
  \end{xlrbox}

  The \emph{pseudomonoid presentation} of $\mathcal{M} = (\cdot, \xusebox{tinymonoid-white},
  \xusebox{tinyunit-white})$\footnote{$\mathcal{M}$ is not a bicategory, rather
  it is data from which a free symmetric monoidal bicategory can be generated à
  la generators-and-relations.} is given by
  \begin{itemize}
    \item a generating $0$-morphism: $\cdot$\footnote{The point $\cdot$ represents a
      $0$-dimensional aspect of our graphical calculus, i.e.\ the
    \enquote{colour} of the wires at the boundaries of $2$-dimensional tiles.
    Graphically, this \enquote{colour} is depicted by (implicitly) upwards
    flowing wires, contrasting with its dual colour, the downwards flowing
    wires.};
    \item generating $1$-morphisms: $\xusebox{tinymonoid-white}$ and $\xusebox{tinyunit-white}$;
    \item invertible generating $2$-morphisms expressing associativity and unitality respectively:
      \begin{equation*}
        \begin{tikzpicture}[baseline=(current bounding box.center)]
          \node[mult] (m) {};
          \node[mult, anchor=top] at (m.leftleg) {};
          \node[identity, anchor=top] at (m.rightleg) {};
        \end{tikzpicture}
        \overset{\alpha}{\cong}
        \begin{tikzpicture}[baseline=(current bounding box.center)]
          \node[mult] (m) {};
          \node[mult, anchor=top] at (m.rightleg) {};
          \node[identity, anchor=top] at (m.leftleg) {};
        \end{tikzpicture}
        \qquad \qquad
        \begin{tikzpicture}[baseline=(current bounding box.center)]
          \node[mult] (m) {};
          \node[unit, anchor=top] at (m.leftleg) {};
          \node[identity, anchor=top] at (m.rightleg) {};
        \end{tikzpicture}
        \overset{\lambda}{\cong}
        \begin{tikzpicture}[baseline=(current bounding box.center)]
          \node[identity] (i) {};
          \node[identity, anchor=top] at (i.bottom) {};
        \end{tikzpicture}
        \overset{\rho}{\cong}
        \begin{tikzpicture}[baseline=(current bounding box.center)]
          \node[mult] (m) {};
          \node[unit, anchor=top] at (m.rightleg) {};
          \node[identity, anchor=top] at (m.leftleg) {};
        \end{tikzpicture}
      \end{equation*}
    \item equations witnessing that these inverses are coherent
      (pentagon\footnote{Due to the weak interchange structure of $\Prof$, this
      pentagon should technically be a hexagon where along the bottom, the
      right monoid and the left monoid are interchanged between the two
      associator moves, however for clarity we elide trivial interchange steps
      throughout.} and triangle equations):
      \begin{equation*}
        \begin{tikzcd}[math mode=false, row sep=-35pt, column sep=tiny]
          &
          \xusebox{associator-2}
          \arrow[rr, Rightarrow, "\textcolor{red}{$\alpha$}"]
          &&
          \xusebox{associator-3}
          \arrow[rd, Rightarrow, "\textcolor{red}{$\alpha$}"]
          & \\
          \xusebox{associator-1}
          \arrow[ru, Rightarrow, "\textcolor{red}{$\alpha$}"]
          \arrow[rrd, Rightarrow, "\textcolor{blue}{$\alpha$}"']
          &&&&
          \xusebox{associator-5}
          \\
          &&
          \xusebox{associator-4}
          \arrow[rru, Rightarrow, "\textcolor{blue}{$\alpha$}"']
          &&
        \end{tikzcd}
        \qquad
        \begin{tikzcd}[math mode=false, sep=tiny]
          \xusebox{unitor-r}
          \arrow[rr, Rightarrow, "\textcolor{red}{$\alpha$}"]
          \arrow[rd, Rightarrow, "\textcolor{blue}{$\rho$}"']
          &&
          \xusebox{unitor-l}
          \arrow[ld, Rightarrow, "\textcolor{red}{$\lambda$}"]
          \\
          &
          \xusebox{monoid-white}
          &
        \end{tikzcd}
      \end{equation*}
  \end{itemize}
\end{definition}

We actually use \emph{oriented} string diagrams, in the sense of
\autocite[\S~4]{selingerSurveyGraphicalLanguages2009}, as the dual of
$\cdot$, is given by the duality of $\cat{C}$ versus
$\op{\cat{C}}$ in $\Prof$ and is represented diagrammatically by
downwards-oriented strings. For cleanliness, we omit decorations for
upwards-oriented strings.

Presentations can be interpreted in a target symmetric monoidal bicategory, as follows.
\begin{definition}
  An \emph{interpretation} of a presentation $\mathcal{P}$ in a symmetric monoidal bicategory
  $\cat{C}$ is given by a strict symmetric monoidal 2-functor from the free
  symmetric monoidal bicategory on $\mathcal{P}$ to $\cat{C}$.
\end{definition}

\noindent
As discussed in \textcite[\S~2.1]{bartlettModularCategoriesRepresentations2015}, such an interpretation corresponds exactly to choosing for each $k$-dimensional generator of $\mathcal{P}$ a corresponding $k$-morphism of $\mathcal{C}$, satisfying the corresponding equations. So interpretations of presentations are easy to work with. The following then follows.

\begin{lemma}
  Interpretations of $\mathcal{M}$ in $\Prof$ correspond to Cauchy complete
  promonoidal categories.
\end{lemma}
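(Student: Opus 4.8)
The plan is to prove this by unwinding definitions in two stages, the second of which is essentially Day's identification of pseudomonoids in $\Prof$ with promonoidal categories; no deep argument is involved. For the first stage I would invoke the principle recalled just above, from \autocite{bartlettModularCategoriesRepresentations2015}: an interpretation of a presentation in a symmetric monoidal bicategory is nothing more than a choice, for each generating $k$-morphism, of a $k$-morphism of the target satisfying the imposed equations. Unpacking this for $\mathcal{M}$ and $\Prof$ shows that an interpretation of $\mathcal{M}$ in $\Prof$ consists of exactly: a category $\cat{C}$; profunctors $m\colon \cat{C}\times\cat{C}\to\cat{C}$ and $u\colon \mathbf{1}\to\cat{C}$, where $\mathbf{1}$ is the terminal category, the monoidal unit of $\Prof$; invertible $2$-morphisms $\alpha$, $\lambda$, $\rho$ with the evident sources and targets; and a pentagon and a triangle equation relating them. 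In other words, an interpretation of $\mathcal{M}$ in $\Prof$ is precisely a pseudomonoid in $\Prof$.

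For the second stage I would unwind what a pseudomonoid in $\Prof$ is, using the explicit description of the bicategory. With the convention that a profunctor $\cat{A}\to\cat{B}$ is a functor $\op{\cat{A}}\times\cat{B}\to\Set$, the multiplication $m$ is a functor $\op{\cat{C}}\times\op{\cat{C}}\times\cat{C}\to\Set$ --- a promultiplication $P(a,b;c)$ in the sense of Day \autocite{loregianCoendCalculus2019} --- and the unit $u$ is a functor $\cat{C}\to\Set$, a unit object $J$. Horizontal composition in $\Prof$ is computed by coends, so the source and target of $\alpha$ are the functors $(a,b,c,e)\mapsto \int^{d} P(a,b;d)\times P(d,c;e)$ and $(a,b,c,e)\mapsto \int^{d} P(b,c;d)\times P(a,d;e)$, and those of $\lambda$ and $\rho$ are coends against $J$ compared with the identity profunctor $\cat{C}(-,=)$. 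Chasing these formulae, the invertible $2$-cells $\alpha$, $\lambda$, $\rho$ are exactly Day's associativity and unit isomorphisms, and the pentagon and triangle equations of $\mathcal{M}$ become Day's pentagon and triangle axioms. Hence a pseudomonoid in $\Prof$ is a promonoidal category, and, since this article restricts attention to Cauchy complete categories throughout, interpretations of $\mathcal{M}$ in $\Prof$ correspond to Cauchy complete promonoidal categories --- the reverse direction being the same dictionary read backwards.

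I expect the only real care to be needed in this second stage: keeping the variances and the direction conventions for profunctor composition consistent so that the coend formulae line up with a fixed reference definition of promonoidal category, and noting that the pentagon equation of $\mathcal{M}$ --- which, per the footnote to \Cref{def:pseudomonoidpresentation}, is really a hexagon once the weak interchange of $\Prof$ is made explicit --- still reduces to Day's pentagon once the elided interchange cells are reinstated. This is the crux only in the sense of being the one place an error could slip in; conceptually the lemma is just the classical correspondence between pseudomonoids in $\Prof$ and promonoidal categories. If one wished to upgrade the statement to an equivalence between the $2$-category of interpretations of $\mathcal{M}$ and that of (Cauchy complete) promonoidal categories, the same translation applied to $1$- and $2$-cells would do it, but the object-level correspondence above is the substance.
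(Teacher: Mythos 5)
Your proposal is correct and matches the paper's (implicit) argument: the paper gives no explicit proof, merely noting that an interpretation amounts to choosing a $k$-morphism of $\Prof$ for each generator of $\mathcal{M}$ subject to the stated equations --- i.e.\ a pseudomonoid in $\Prof$ --- and that the lemma \enquote{then follows} via the classical Day correspondence with promonoidal categories, with Cauchy completeness entering only as the paper's standing restriction. Your two-stage unwinding simply fills in the details the paper leaves to the reader.
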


\noindent
A promonoidal category is \enquote{nearly} a monoidal category: it captures
only when Hom-sets have the form $\cat{C} (X, Y \otimes Z)$ --- that is,
$\otimes$ may only appear as a right action on the Hom. To overcome this
limitation, we must restrict our attention to \emph{representable} profunctors
(equivalently, profunctors which admit right adjoints).

\begin{definition}[Free right-adjoint extension]
  For a presentation $\mathcal{P}$, we denote $\ra{\mathcal{P}}$ as the
  presentation with all of the data of $\mathcal{P}$, and in addition, for each
  generating $1$-morphism of $\mathcal{P}$: a freely-added right-adjoint
  $1$-morphism, unit and counit $2$-morphisms, and equational structure
  witnessing that the triangle equations for this adjunction hold.

  The procedure $\ra{(-)}$ which freely adds right adjoints is well-behaved
  in the sense of
  \textcite[\S~2.3]{bartlettModularCategoriesRepresentations2015}, and in
  general, given the data of $\mathcal{P}$, it is unambiguous to discuss a
  presentation $\ra{\mathcal{P}}$ with freely-added right adjoints without
  giving an explicit description as we do in \Cref{right-adjoint-monoid}.
\end{definition}

\begin{example}\label[example]{right-adjoint-monoid}
  \begin{xlrbox}{eta-prime-target}
    \begin{tikzpicture}[baseline=(current bounding box.center)]
      \node[comult, dot=black] (c) {};
      \node[mult, anchor=top] (m) at (c.bottom) {};
    \end{tikzpicture}
  \end{xlrbox}

  \begin{xlrbox}{eta-target}
    \begin{tikzpicture}[baseline=(current bounding box.center)]
      \node[comult] (c) {};
      \node[mult, dot=black, anchor=top] (m) at (c.bottom) {};
    \end{tikzpicture}
  \end{xlrbox}

  \begin{xlrbox}{white-monoid-triangle-1}
    \begin{tikzpicture}[baseline=(current bounding box.center)]
      \node[mult] (m) {};
      \node[draw=red, dashed, fit=(m.leftleg) (m.rightleg)]{};
    \end{tikzpicture}
  \end{xlrbox}

  \begin{xlrbox}{white-monoid-triangle-2}
    \begin{tikzpicture}[baseline=(current bounding box.center)]
      \node[mult] (m) {};
      \node[comult, dot=black, anchor=leftleg] (c) at (m.leftleg) {};
      \node[mult, anchor=top] (m') at (c.bottom) {};
      \node[draw=red, dashed, fit=(m.top) (m.leftleg) (m.rightleg) (c)]{};
    \end{tikzpicture}
  \end{xlrbox}

  \begin{xlrbox}{white-unit-triangle-1}
    \begin{tikzpicture}[baseline=(current bounding box.center)]
      \node[unit] (u) {};
      \node[draw=red, dashed, anchor=north, below=2\cobgap of u, minimum height=1cm, minimum width=1cm]{};
    \end{tikzpicture}
  \end{xlrbox}

  \begin{xlrbox}{white-unit-triangle-2}
    \begin{tikzpicture}[baseline=(current bounding box.center)]
      \node[counit, dot=black] (c) {};
      \node[unit, anchor=top] at (c.bottom) {};
      \node[unit, above=2\cobgap of c] (u) {};
      \node[draw=red, dashed, fit=(c.bottom) (u)]{};
    \end{tikzpicture}
  \end{xlrbox}

  \begin{xlrbox}{black-comonoid-triangle-1}
    \begin{tikzpicture}[baseline=(current bounding box.center)]
      \node[comult, dot=black] (c) {};
      \node[draw=red, dashed, fit=(c.leftleg) (c.rightleg)]{};
    \end{tikzpicture}
  \end{xlrbox}

  \begin{xlrbox}{black-comonoid-triangle-2}
    \begin{tikzpicture}[baseline=(current bounding box.center)]
      \node[comult, dot=black] (c) {};
      \node[mult, anchor=top] (m) at (c.bottom) {};
      \node[comult, dot=black, anchor=leftleg] (c') at (m.leftleg) {};
      \node[draw=red, dashed, fit=(m.top) (m.leftleg) (m.rightleg) (c'.bottom)]{};
    \end{tikzpicture}
  \end{xlrbox}

  \begin{xlrbox}{black-counit-triangle-1}
    \begin{tikzpicture}[baseline=(current bounding box.center)]
      \node[counit, dot=black] (c) {};
      \node[draw=red, dashed, anchor=south, above=2\cobgap of c, minimum height=1cm, minimum width=1cm]{};
    \end{tikzpicture}
  \end{xlrbox}

  \begin{xlrbox}{black-counit-triangle-2}
    \begin{tikzpicture}[baseline=(current bounding box.center)]
      \node[counit, dot=black] (c) {};
      \node[unit, above=2\cobgap of c] (u) {};
      \node[counit, dot=black, anchor=bottom] at (u.top) {};
      \node[draw=red, dashed, fit=(c) (u)]{};
    \end{tikzpicture}
  \end{xlrbox}

  The \emph{right-adjoint pseudomonoid presentation} $\ra{\mathcal{M}}$ is
  given by the data of $\mathcal{M}$, and additionally:
  \begin{itemize}
    \item $1$-morphisms: $\xusebox{tinycomonoid-black}$ and
      $\xusebox{tinycounit-black}$;
    \item unit and counit $2$-morphisms witnessing adjunctions
      $\xusebox{tinymonoid-white} \dashv \xusebox{tinycomonoid-black}$ and
      $\xusebox{tinyunit-white} \dashv \xusebox{tinycounit-black}$:
      \begin{equation*}
        \xusebox{double-tallid}
        \overset{\eta_\otimes}{\Rightarrow}
        \xusebox{eta-prime-target} \qquad\qquad
        \begin{tikzpicture}[baseline=(current bounding box.center)]
          \node[comult, dot=black] (c) {};
          \node[mult, anchor=leftleg] at (c.leftleg) {};
        \end{tikzpicture}
        \overset{\varepsilon_\otimes}{\Rightarrow}
        \xusebox{tallid} \qquad\qquad
        \begin{tikzpicture}[baseline=(current bounding box.center)]
          \node[draw, dashed, minimum height=1cm, minimum width=1cm] {};
        \end{tikzpicture}
        \overset{\varphi_I}{\Rightarrow}
        \begin{tikzpicture}[baseline=(current bounding box.center)]
          \node[counit, dot=black] (c) {};
          \node[unit, anchor=top] at (c.bottom) {};
        \end{tikzpicture} \qquad\qquad
        \begin{tikzpicture}[baseline=(current bounding box.center)]
          \node[counit, dot=black] (c) {};
          \node[unit, above=2\cobgap of c] {};
        \end{tikzpicture}
        \overset{\psi_I}{\Rightarrow}
        \xusebox{tallid}
      \end{equation*}
    \item equations witnessing that the adjunction is coherent (triangle equations):
      \begin{equation*}
        \begin{tikzcd}[math mode=false, sep=0pt, row sep=-5pt]
          &
          \xusebox{white-monoid-triangle-2}
          \arrow[rd, Rightarrow, "\textcolor{red}{$\varepsilon_\otimes$}"]
          &
          \\
          \xusebox{white-monoid-triangle-1}
          \arrow[ru, Rightarrow, "\textcolor{red}{$\eta_\otimes$}"]
          \arrow[rr, equal]
          &&
          \xusebox{monoid-white}
        \end{tikzcd}
        \quad
        \begin{tikzcd}[math mode=false, sep=0pt, row sep=-15pt]
          &
          \xusebox{white-unit-triangle-2}
          \arrow[rd, Rightarrow, "\textcolor{red}{$\psi_I$}"]
          &
          \\
          \xusebox{white-unit-triangle-1}
          \arrow[ru, Rightarrow, "\textcolor{red}{$\varphi_I$}", start anchor={[xshift=\cobgap, yshift=\cobgap]center}, shorten >=10pt]
          \arrow[rr, equal]
          &&
          \xusebox{unit-white}
        \end{tikzcd}
        \quad
        \begin{tikzcd}[math mode=false, sep=0pt, row sep=-5pt]
          \xusebox{black-comonoid-triangle-1}
          \arrow[rr, equal]
          \arrow[rd, Rightarrow, "\textcolor{red}{$\eta_\otimes$}"', shorten=-5pt]
          &&
          \xusebox{comonoid-black}
          \\
          &
          \xusebox{black-comonoid-triangle-2}
          \arrow[ru, Rightarrow, "\textcolor{red}{$\varepsilon_\otimes$}"', shorten <=-5pt, start anchor={east}]
          &
        \end{tikzcd}
        \quad
        \begin{tikzcd}[math mode=false, sep=0pt, row sep=-15pt]
          \xusebox{black-counit-triangle-1}
          \arrow[rr, equal]
          \arrow[rd, Rightarrow, "\textcolor{red}{$\varphi_I$}"', start anchor={[xshift=\cobgap, yshift=-\cobgap]center}, shorten >=10pt]
          &&
          \xusebox{counit-black}
          \\
          &
          \xusebox{black-counit-triangle-2}
          \arrow[ru, Rightarrow, "\textcolor{red}{$\psi_I$}"']
          &
        \end{tikzcd}
      \end{equation*}
  \end{itemize}
\end{example}

\begin{lemma}
  In the free monoidal bicategory on $\ra{\mathcal{M}}$, $(\cdot,
  \xusebox{tinycomonoid-black}, \xusebox{tinycounit-black})$ can be given a
  canonical pseudocomonoid structure, by transporting the pseudomonoid $(\cdot,
  \xusebox{tinymonoid-white}, \xusebox{tinyunit-white})$ across the
  adjunctions.
\end{lemma}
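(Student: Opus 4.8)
The plan is to use the principle that \emph{transporting algebraic structure along an adjunction reverses the multiplications}, so that a pseudomonoid is carried to a pseudocomonoid. Write $m = \xusebox{tinymonoid-white}$ and $u = \xusebox{tinyunit-white}$ for the generating $1$-morphisms of $\ra{\mathcal{M}}$ and $\ra{m} = \xusebox{tinycomonoid-black}$, $\ra{u} = \xusebox{tinycounit-black}$ for their freely adjoined right adjoints. The structural $2$-morphisms of the sought pseudocomonoid will be defined as the \emph{mates} of the structural $2$-morphisms $\alpha,\lambda,\rho$ of the pseudomonoid, and the pseudocomonoid axioms will be deduced as the mates of the pseudomonoid axioms. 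Everything takes place in the free monoidal bicategory $\mathcal{F}$ on $\ra{\mathcal{M}}$; since $\mathcal{F}$ is a genuine monoidal bicategory, the pseudofunctoriality of $\otimes$ means that tensoring on either side with a fixed object preserves adjunctions, and adjunctions compose.

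First I would assemble the composite adjunctions needed even to \emph{state} the transported cells. Starting from the generating adjunctions $m \dashv \ra{m}$ and $u \dashv \ra{u}$ of $\ra{\mathcal{M}}$, with the triangle equations listed in \Cref{right-adjoint-monoid}, tensoring with the object $\cdot$ and composing produces adjunctions whose left legs are the iterated multiplications $m\circ(m\otimes 1)$ and $m\circ(1\otimes m)$ and whose right legs are the iterated comultiplications $(\ra{m}\otimes 1)\circ\ra{m}$ and $(1\otimes\ra{m})\circ\ra{m}$ (here $1$ is the appropriate identity $1$-morphism); in the same way one obtains adjunctions with left legs $m\circ(u\otimes 1)$ and $m\circ(1\otimes u)$, alongside the trivial adjunction $1 \dashv 1$. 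In each case the unit and counit are built in the standard way from $\eta_\otimes,\varepsilon_\otimes,\varphi_I,\psi_I$ and the structure $2$-morphisms of $\mathcal{F}$, and --- crucially --- the right leg is \emph{literally} the iterated comultiplication $1$-morphism, not merely isomorphic to it.

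Next I would define the coassociator $\ra{\alpha}$ to be the mate of $\alpha$ (or of $\alpha^{-1}$, as the orientation dictates) across the two adjunctions between iterated multiplications and iterated comultiplications, and the counitors $\ra{\lambda},\ra{\rho}$ to be the mates of $\lambda,\rho$ across the corresponding adjunctions, using also $1 \dashv 1$. Invertibility is automatic: the mate of the inverse of an invertible $2$-morphism is inverse to its mate, because taking mates is compatible with vertical composition and sends identities to identities --- precisely the content of the triangle equations. It then remains to verify the pseudocomonoid coherence, namely the co-pentagon for $\ra{\alpha}$ and the co-triangle relating $\ra{\lambda},\ra{\rho},\ra{\alpha}$. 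This too is automatic: the mate correspondence is functorial with respect to vertical composition, horizontal composition and whiskering (including whiskering by identities under $\otimes$), so any pasting identity between composites of $\alpha$'s transports verbatim to the analogous pasting identity between composites of $\ra{\alpha}$'s, and likewise for the mixed composites of $\alpha$'s, $\lambda$'s and $\rho$'s. Applying this to the pentagon and triangle equations of $\mathcal{M}$ yields exactly the pseudocomonoid axioms for $(\cdot,\xusebox{tinycomonoid-black},\xusebox{tinycounit-black},\ra{\alpha},\ra{\lambda},\ra{\rho})$. The structure is canonical because its only inputs are adjunction data already present in $\ra{\mathcal{M}}$, and mates are uniquely determined by them.

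The main obstacle is bookkeeping rather than mathematical depth: one must fix the orientation and variance conventions --- which $2$-morphism is $\alpha$ versus $\alpha^{-1}$, the precise shape of each composite adjunction, and the direction in which each mate points --- so that the four transported cells assemble into a pseudocomonoid rather than some mismatched structure, and then carry the pentagon and triangle through the mate calculus without orientation errors. The projected-bicategory and internal-string-diagram calculi make this tractable, since each mate is simply a deformation of a diagram past a cup--cap pair.
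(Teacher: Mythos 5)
Your proposal is correct and matches the paper exactly: the paper states this lemma without a separate proof, the construction being precisely the one indicated in the statement itself, namely defining the coassociator and counitors as mates of $\alpha$, $\lambda$, $\rho$ across the (composite, $\otimes$-whiskered) adjunctions and deducing coherence from the functoriality of the mate correspondence. Your additional care about invertibility of mates and the strict identification of the right adjoint of a composite with the composite of right adjoints is exactly the bookkeeping the paper leaves implicit.
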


\begin{lemma}
  Interpretations of $\ra{\mathcal{M}}$ in $\Prof$ correspond to Cauchy
  complete monoidal categories.
\end{lemma}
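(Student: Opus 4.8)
The plan is to bootstrap off the lemma identifying interpretations of $\mathcal{M}$ in $\Prof$ with Cauchy complete promonoidal categories, using \Cref{cauchy} to upgrade \enquote{promonoidal} to \enquote{monoidal}. First I would note that an interpretation of $\ra{\mathcal{M}}$ in $\Prof$ restricts, along the inclusion of presentations $\mathcal{M} \hookrightarrow \ra{\mathcal{M}}$, to an interpretation of $\mathcal{M}$, hence to a Cauchy complete promonoidal category: a Cauchy complete category $\cat{C}$ together with a promultiplication $1$-morphism $P\colon \cat{C} \times \cat{C} \to \cat{C}$ and a prounit $1$-morphism $J\colon \mathbf{1} \to \cat{C}$ in $\Prof$, equipped with the invertible coherence $2$-cells $\alpha, \lambda, \rho$ (recall that for a genuine monoidal structure $P(c; a, b) = \cat{C}(c, a \otimes b)$, which is why the covariant embedding is the relevant notion). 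The added data of $\ra{\mathcal{M}}$ --- the $1$-morphisms $\xusebox{tinycomonoid-black}$ and $\xusebox{tinycounit-black}$, the $2$-morphisms $\eta_\otimes, \varepsilon_\otimes, \varphi_I, \psi_I$, and their triangle equations --- is, by the universal property of the free right-adjoint extension $\ra{(-)}$ spelled out in \Cref{right-adjoint-monoid}, exactly a choice of right adjoints to the images of $\xusebox{tinymonoid-white}$ and $\xusebox{tinyunit-white}$. So an interpretation of $\ra{\mathcal{M}}$ is the same as a Cauchy complete promonoidal category in which $P$ and $J$ both admit right adjoints in $\Prof$.

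Now I would invoke \Cref{cauchy} with the Cauchy complete category $\cat{C}$ --- the codomain of both pro-arrows $P$ and $J$ --- playing the role of the theorem's Cauchy complete category, and with $\cat{C} \times \cat{C}$ and $\mathbf{1}$ as the auxiliary domain. This gives: $P$ has a right adjoint iff $P \cong {\otimes}^{*}$ for a functor ${\otimes}\colon \cat{C} \times \cat{C} \to \cat{C}$, and $J$ has a right adjoint iff $J \cong I^{*}$ for a functor $\mathbf{1} \to \cat{C}$, i.e.\ an object $I \in \cat{C}$; the right adjoints are then ${\otimes}_{*}$ and $I_{*}$ by \Cref{prof-embedding}. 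Transporting $\alpha, \lambda, \rho$ across these isomorphisms of $1$-cells yields invertible $2$-cells in $\Prof$ between the covariant embeddings of the evident reassociations and unit insertions built from $\otimes$ and $I$; since the covariant embedding $\Cat \to \Prof$ is fully faithful on $1$- and $2$-cells (\Cref{prof-embedding}), these reflect to a natural associator and left/right unitors for $(\cat{C}, \otimes, I)$, and the pentagon and triangle equations of $\mathcal{M}$ reflect to Mac Lane's pentagon and triangle. Thus every interpretation of $\ra{\mathcal{M}}$ determines a Cauchy complete monoidal category. Conversely, a Cauchy complete monoidal category $(\cat{C}, \otimes, I, a, l, r)$ gives an interpretation of $\ra{\mathcal{M}}$ by taking $P = {\otimes}^{*}$, $J = I^{*}$ (which admit right adjoints by \Cref{prof-embedding}), the promonoidal coherence cells to be the images of $a, l, r$, and the adjunction data to be the canonical units and counits of the embedding adjunctions $F^{*} \dashv F_{*}$. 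Because right adjoints --- and hence the transported structure --- are unique up to canonical isomorphism, these two passages are mutually inverse up to isomorphism, which is the claimed correspondence; this recovers the standard fact that a representable promonoidal category is a monoidal category, repackaged presentationally.

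The conceptual content is slight, but there are two places requiring care, and I expect the second to be the real obstacle. First, one must track variances so that \Cref{cauchy} is applied with $\cat{C}$ --- the codomain of $P$ and $J$, which is exactly the category already known to be Cauchy complete --- in the right slot, and so that transporting $\alpha, \lambda, \rho$ across the chosen isomorphisms $P \cong {\otimes}^{*}$, $J \cong I^{*}$ yields the associator and unitors in the normalisation under which the reflected coherence diagrams are literally Mac Lane's. Second, and more delicate: the covariant embedding $\Cat \to \Prof$ is only a pseudofunctor, and the pentagon of \Cref{def:pseudomonoidpresentation} is, as its footnote warns, really a hexagon carrying a weak-interchange edge of $\Prof$; so reflecting the pentagon and triangle along the embedding requires checking that the embedding's compositor $2$-cells, together with that interchange edge, cancel coherently, leaving precisely Mac Lane's identities with no residual coherence datum. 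This last point is routine given the full faithfulness in \Cref{prof-embedding} and the fact that the whole argument takes place inside the Cauchy complete --- hence essentially $\Cat$-like --- part of $\Prof$, but it is where the bookkeeping concentrates.
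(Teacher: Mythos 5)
Your argument is correct and is exactly the one the paper intends: it states this lemma without proof, leaving it to follow from the preceding lemma on promonoidal interpretations of $\mathcal{M}$ together with \Cref{cauchy} and \Cref{prof-embedding}, which is precisely the route you take (right adjoints for the promultiplication and prounit force representability, and full faithfulness of the embedding reflects the coherence data to Mac Lane's axioms). The two points of care you flag — variance bookkeeping in applying \Cref{cauchy} to the codomain $\cat{C}$, and the pseudofunctoriality/interchange issue in reflecting the pentagon — are real but routine, and the paper implicitly defers them to the cited literature.
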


\subsection{Braiding and symmetry}

\begin{definition}\label[definition]{braided}
  \begin{xlrbox}{braid-coherence-1}
    \begin{tikzpicture}[baseline=(current bounding box.center)]
      \node[mult] (m) {};
      \node[mult, anchor=top] (m') at (m.leftleg) {};
      \node[identity, anchor=top] at (m.rightleg) {};
      \node[draw=red, dashed, fit={(m.top) (m'.leftleg) (m.rightleg)}]{};
      \node[draw=blue, dashed, fit={(m'.top) (m'.leftleg) (m'.rightleg)}]{};
    \end{tikzpicture}
  \end{xlrbox}

  \begin{xlrbox}{braid-coherence-2}
    \begin{tikzpicture}[baseline=(current bounding box.center)]
      \node[mult] (m) {};
      \node[mult, anchor=top] (m') at (m.rightleg) {};
      \node[identity, anchor=top] at (m.leftleg) {};
      \node[draw=red, dashed, fit={(m.top) (m.leftleg) (m.rightleg)}]{};
    \end{tikzpicture}
  \end{xlrbox}

  \begin{xlrbox}{braid-coherence-3}
    \begin{tikzpicture}[baseline=(current bounding box.center)]
      \node[mult] (m) {};
      \node[braid, anchor=toprightleg] (b) at (m.rightleg) {};
      \node[mult, anchor=top] at (b.bottomrightleg) {};
      \node[identity, anchor=top] at (b.bottomleftleg) {};
    \end{tikzpicture}
  \end{xlrbox}

  \begin{xlrbox}{braid-coherence-4}
    \begin{tikzpicture}[baseline=(current bounding box.center)]
      \node[mult, span=1.5] (m) {};
      \node[mult, anchor=top] (m') at (m.leftleg) {};
      \node[braid, anchor=topleftleg] (b) at (m'.rightleg) {};
      \node[braid, anchor=toprightleg] at (b.bottomleftleg) {};
      \node[identity, anchor=top] at (b.bottomrightleg) {};
      \node[identity, anchor=top] at (m'.leftleg) {};
      \node[identity, anchor=top] at (m.rightleg) {};
      \node[draw=red, dashed, fit={(m.top) (m'.leftleg) (m.rightleg)}]{};
    \end{tikzpicture}
  \end{xlrbox}

  \begin{xlrbox}{braid-coherence-5}
    \begin{tikzpicture}[baseline=(current bounding box.center)]
      \node[mult] (m) {};
      \node[mult, anchor=top] (m') at (m.leftleg) {};
      \idstack[anchor=m.rightleg]{2}
      \node[braid, anchor=topleftleg] at (m'.leftleg) {};
      \node[draw=blue, dashed, fit={(m.top) (m'.leftleg) (m.rightleg)}]{};
    \end{tikzpicture}
  \end{xlrbox}

  \begin{xlrbox}{braid-coherence-6}
    \begin{tikzpicture}[baseline=(current bounding box.center)]
      \node[mult, span=1.5] (m) {};
      \node[mult, anchor=top] (m') at (m.rightleg) {};
      \idstack[anchor=m.leftleg]{1}
      \idstack[anchor=m'.rightleg]{1}
      \node[braid, anchor=toprightleg] at (m'.leftleg) {};
      \node[draw=blue, dashed, fit={(m'.top) (m'.leftleg) (m'.rightleg)}]{};
    \end{tikzpicture}
  \end{xlrbox}

  \begin{xlrbox}{braid-coherence-7}
    \begin{tikzpicture}[baseline=(current bounding box.center)]
      \node[mult, span=1.5] (m) {};
      \node[mult, anchor=top] (m') at (m.rightleg) {};
      \node[braid, anchor=topleftleg] (b) at (m'.leftleg) {};
      \node[braid, anchor=toprightleg] at (b.bottomleftleg) {};
      \node[identity, anchor=top] at (b.bottomrightleg) {};
      \idstack[anchor=m.leftleg]{2}
    \end{tikzpicture}
  \end{xlrbox}

  The \emph{braided pseudomonoid presentation} $\mathcal{B}$ is given by the data of $\mathcal{M}$, and an
  additional $2$-morphism specifying that the pseudomonoid is commutative:
  \begin{equation*}
    \xusebox{monoid-white}
    \overset{\sigma}{\cong}
    \begin{tikzpicture}[baseline=(current bounding box.center)]
      \node[mult] (m) {};
      \node[braid, anchor=toprightleg] at (m.rightleg) {};
    \end{tikzpicture}
  \end{equation*}
  and coherence equation (hexagon):

  \begin{equation*}
    \begin{tikzcd}[math mode=false, sep=-45pt, column sep=tiny]
      &
      \xusebox{braid-coherence-2}
      \arrow[rr, Rightarrow, "\textcolor{red}{$\sigma$}"]
      &&
      \xusebox{braid-coherence-3}
      \arrow[rr, Rightarrow]
      &&
      \xusebox{braid-coherence-4}
      \arrow[rd, Rightarrow, "\textcolor{red}{$\alpha$}"]
      &
      \\
      \xusebox{braid-coherence-1}
      \arrow[ru, Rightarrow, "\textcolor{red}{$\alpha$}"]
      \arrow[rrd, Rightarrow, "\textcolor{blue}{$\sigma$}"']
      &&&&&&
      \xusebox{braid-coherence-7}
      \\
      &&
      \xusebox{braid-coherence-5}
      \arrow[rr, Rightarrow, "\textcolor{blue}{$\alpha$}"']
      &&
      \xusebox{braid-coherence-6}
      \arrow[rru, Rightarrow, "\textcolor{blue}{$\sigma$}"']
      &&
    \end{tikzcd}
  \end{equation*}
\end{definition}

\begin{lemma}
  Interpretations of $\ra{\mathcal{B}}$ in $\Prof$ correspond to Cauchy
  complete braided monoidal categories.
\end{lemma}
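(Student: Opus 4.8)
\noindent The plan is to bootstrap off the preceding lemma, which identifies interpretations of $\ra{\mathcal{M}}$ in $\Prof$ with Cauchy complete monoidal categories. Since $\mathcal{B}$ (see \Cref{braided}) has the same generating $0$- and $1$-morphisms as $\mathcal{M}$, the free right-adjoint extension adjoins the same data to each, so $\ra{\mathcal{B}}$ is just $\ra{\mathcal{M}}$ together with the commutativity $2$-morphism $\sigma$ and the single hexagon equation. Hence an interpretation of $\ra{\mathcal{B}}$ in $\Prof$ is the same as an interpretation $(\cat{C}, \otimes, I, \alpha, \lambda, \rho)$ of $\ra{\mathcal{M}}$ --- that is, a Cauchy complete monoidal category by the preceding lemma --- equipped with a choice of $2$-morphism interpreting $\sigma$ for which the hexagon holds. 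It remains to check that this extra datum is exactly a braiding on $\cat{C}$.

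\noindent First I would pin down what an interpretation of $\sigma$ is. The generating multiplication $1$-morphism is representable, since $\ra{\mathcal{M}}$ freely adjoins a right adjoint to it (see \Cref{right-adjoint-monoid}), so by \Cref{cauchy} together with \Cref{prof-embedding} it is interpreted as the covariant embedding of the tensor functor $\otimes\colon \cat{C} \times \cat{C} \to \cat{C}$; precomposing with the ambient symmetry $1$-morphism of $\Prof$ --- interpreted as the symmetry of $\cat{C} \times \cat{C}$, which is again a covariant embedding since swapping factors is an isomorphism of categories --- gives the covariant embedding of the twisted tensor $(A, B) \mapsto B \otimes A$. Thus $\sigma$ is an invertible $2$-morphism between the covariant embeddings of two functors $\cat{C} \times \cat{C} \to \cat{C}$, and since the covariant embedding $\Cat \to \Prof$ is fully faithful as a pseudofunctor, hence bijective on $2$-morphisms (\Cref{prof-embedding}), it corresponds to a natural isomorphism whose components $\sigma_{A,B}\colon A \otimes B \xrightarrow{\sim} B \otimes A$ are precisely the underlying data of a braiding.

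\noindent The remaining and most delicate step is to show that the hexagon equation of $\ra{\mathcal{B}}$ corresponds, under this dictionary, to the hexagon axioms of a braided monoidal category, and conversely. Reading off the diagram of \Cref{braided} needs two kinds of bookkeeping: discarding the structural interchanger cells of $\Prof$ --- in particular the unlabelled edge, which is the coherence of the ambient symmetry expressing a crossing of a bundled pair of wires as two successive crossings, of the same elided nature as in the pentagon footnote --- and tracking orientations so that the associator moves match $\alpha^{\pm 1}$ in the usual hexagon. The displayed hexagon then yields the braiding axiom governing $\sigma_{A, B \otimes C}$; obtaining the companion axiom for $\sigma_{A \otimes B, C}$ is where I expect the real work to lie, and I would derive it by rotating the hexagon $2$-morphism using the compact structure of $\Prof$ (the dual object $\op{\cat{C}}$ is always available, so no duals in $\cat{C}$ itself are required): rotation transports the axiom for $\sigma_{A, B \otimes C}$ in $\cat{C}$ to the corresponding axiom for the induced braiding on $\op{\cat{C}}$ with the reversed tensor, which unwinds to exactly the axiom for $\sigma_{A \otimes B, C}$ in $\cat{C}$. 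The converse direction is routine: a Cauchy complete braided monoidal category supplies interpretations of all generators of $\ra{\mathcal{B}}$, with $\sigma$ the braiding and the hexagon equation its axiom, the remaining coherences being those already handled in the $\ra{\mathcal{M}}$ case together with the coherence theorem for braided monoidal categories.
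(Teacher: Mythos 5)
The paper offers no proof of this lemma at all --- it is asserted to follow from the general principle that an interpretation of a presentation is a choice of $k$-morphism for each $k$-dimensional generator satisfying the stated equations --- so the only question is whether your argument is sound. Its first two stages are: $\ra{\mathcal{B}}$ is $\ra{\mathcal{M}}$ plus $\sigma$ and the hexagon; the multiplication is interpreted by the covariant embedding of $\otimes$, so by full faithfulness of the embedding $\sigma$ corresponds to a natural isomorphism $\sigma_{A,B}\colon A\otimes B \to B\otimes A$. That is all correct and is essentially the intended reading.

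The gap is in your treatment of the second hexagon. Rotation under the compact structure of $\Prof$ (i.e.\ taking mates across the duality $\cat{C} \dashv \op{\cat{C}}$) is a bijection on $2$-morphisms that preserves and reflects equations, so it can only ever produce statements \emph{equivalent} to the hexagon you already have. Concretely, transporting the axiom for $\sigma_{A,B\otimes C}$ across the duality yields the \emph{second} hexagon for the \emph{induced} braiding on $\op{\cat{C}}$ --- which, unwound, is again the first hexagon for $\sigma$ on $\cat{C}$, not the independent axiom for $\sigma_{A\otimes B,C}$. The two hexagons are genuinely independent: on $\mathbb{Z}$-graded vector spaces every natural isomorphism $A\otimes B \cong B\otimes A$ is given on homogeneous components by scalars $c(m,n)$, the two hexagons become multiplicativity in the second and first arguments respectively, and $c(m,n)=q^{m^2 n}$ satisfies one but not the other. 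So no invertible $2$-functorial operation can derive the missing axiom, and your proposed derivation cannot go through. The correct resolution is that a braided pseudomonoid (following McCrudden and Day--Street) carries \emph{both} hexagon coherence equations; with the second equation included in $\mathcal{B}$, your dictionary transcribes it directly to the axiom for $\sigma_{A\otimes B,C}$ with no further work, and the lemma follows exactly as in your converse direction. If the presentation really is taken with only the single displayed hexagon, the lemma as stated fails on the counterexample above.
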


Likewise, \enquote{braided} can be promoted to \enquote{balanced} by adding a
compatible twist in the presentation.

\begin{definition}
  \begin{xlrbox}{braid-twist-coherence-1}
    \begin{tikzpicture}[baseline=(current bounding box.center)]
      \node[mult] (m) {};
      \node[draw=red, dashed, fit={(m.top)}]{};
      \node[draw=blue, dashed, fit={(m.top) (m.leftleg) (m.rightleg)}]{};
    \end{tikzpicture}
  \end{xlrbox}

  \begin{xlrbox}{braid-twist-coherence-2}
    \begin{tikzpicture}[baseline=(current bounding box.center)]
      \node[mult] (m) {};
      \node[braid, anchor=toprightleg] (b) at (m.rightleg) {};
      \node[braid, anchor=toprightleg] at (b.bottomrightleg) {};
    \end{tikzpicture}
  \end{xlrbox}

  \begin{xlrbox}{braid-twist-coherence-3}
    \begin{tikzpicture}[baseline=(current bounding box.center)]
      \node[mult] (m) {};
      \node[draw=red, dashed, fit={(m.leftleg)}]{};
      \node[draw=blue, dashed, fit={(m.rightleg)}]{};
    \end{tikzpicture}
  \end{xlrbox}

  \begin{xlrbox}{twist-coherence-1}
    \begin{tikzpicture}[baseline=(current bounding box.center)]
      \node[unit] (m) {};
      \node[draw=red, dashed, fit={(m.top)}]{};
    \end{tikzpicture}
  \end{xlrbox}

  The \emph{balanced pseudomonoid presentation} $\mathcal{L}$ is given by the data of
  $\mathcal{B}$, and additionally a $2$-endomorphism specifying a compatible
  twist:
  \begin{equation*}
    \xusebox{id}
    \overset{\theta}{\cong}
    \xusebox{id}
  \end{equation*}
  and equations:
  \begin{equation*}
    \begin{tikzcd}[math mode=false, row sep=-27.5pt]
      &
      \xusebox{braid-twist-coherence-1}
      \arrow[rd, Rightarrow, "\textcolor{red}{$\theta$}"]
      \arrow[dl, Rightarrow, "\textcolor{blue}{$\sigma^2$}"']
      &
      \\
      \xusebox{braid-twist-coherence-2}
      \arrow[rd, Rightarrow]
      &&
      \xusebox{monoid-white}
      \\
      &
      \xusebox{braid-twist-coherence-3}
      \arrow[ru, Rightarrow,  "\textcolor{red}{$\theta$}", "\textcolor{blue}{$\theta$}"']
      &
    \end{tikzcd}
    \qquad
    \begin{tikzcd}[math mode=false]
      \xusebox{twist-coherence-1}
      \arrow[r, equals, bend right]
      \arrow[r, Rightarrow, bend left, "\textcolor{red}{$\theta$}"]
      &
      \begin{tikzpicture}[baseline=(current bounding box.center)]
        \node[unit] (m) {};
      \end{tikzpicture}
    \end{tikzcd}
  \end{equation*}
\end{definition}

\begin{remark}\label[remark]{symmetry-balanced}
  The coherence equation of \Cref{braided} is redundant in the presence of a
  twist. Conversely, the symmetric pseudomonoid presentation is equivalent to
  the balanced pseudomonoid presentation with a trivial twist.
\end{remark}

Subsequently, we shall examine a variety of presentations, representing
different types of monoidal categories, which extend $\mathcal{M}$: in each
case, their braided (resp.\ balanced) variant is obtained by considering the
corresponding extension with respect to $\mathcal{B}$ (resp.\
$\mathcal{L}$) instead.

\subsection{Autonomous categories}

A monoidal category is autonomous when every object has a left and a right dual. Here we recall how they can be defined via a presentation following \textcite{bartlettModularCategoriesRepresentations2015}.

\begin{definition}\label[definition]{autonomous}
  The \emph{autonomous pseudomonoid presentation} $\mathcal{A}$ is given by the
  data of $\ra{\mathcal{M}}$, and additionally inverses for the following composite
  $2$-morphisms:
  \begin{align*}
    \gamma_L &\coloneqq
    \begin{tikzpicture}[baseline=(current bounding box.center)]
      \node[comult, dot=black] (c) {};
      \node[mult, anchor=rightleg] (m) at (c.leftleg) {};
      \node[identity, anchor=top] at (m.leftleg) {};
      \node[identity, anchor=bottom] (i) at (c.rightleg) {};
      \node[draw=red, dashed, fit={(m.top) (i.top)}]{};
    \end{tikzpicture}
    \xRightarrow{\textcolor{red}{\eta_\otimes}}
    \begin{tikzpicture}[baseline=(current bounding box.center)]
      \node[comult, dot=black] (c) {};
      \node[mult, anchor=rightleg] (m) at (c.leftleg) {};
      \node[identity, anchor=top] at (m.leftleg) {};
      \node[identity, anchor=bottom] at (c.rightleg) {};
      \node[mult, anchor=leftleg, span=1.5] (m') at (m.top) {};
      \node[comult, dot=black, anchor=bottom] (c') at (m'.top) {};
      \node[draw=red, dashed, fit={(m'.north) (m.leftleg) (m'.rightleg)}]{};
    \end{tikzpicture}
    \overset{\textcolor{red}{\alpha}}{\cong}
    \begin{tikzpicture}[baseline=(current bounding box.center)]
      \node[mult] (m) {};
      \node[comult, dot=black, anchor=bottom] (c) at (m.top) {};
      \node[mult, anchor=top] (m') at (m.rightleg) {};
      \node[comult, dot=black, anchor=leftleg] (c') at (m'.leftleg) {};
      \idstack[anchor=m.leftleg]{2}
      \node[draw=red, dashed, fit={(m'.top) (m'.leftleg) (m'.rightleg) (c'.bottom)}]{};
    \end{tikzpicture}
    \xRightarrow{\textcolor{red}{\varepsilon_\otimes}}
    \xusebox{eta-prime-target}
    &
    \gamma_R &\coloneqq
    \begin{tikzpicture}[baseline=(current bounding box.center)]
      \node[comult, dot=black] (c) {};
      \node[mult, anchor=leftleg] (m) at (c.rightleg) {};
      \node[identity, anchor=top] at (m.rightleg) {};
      \node[identity, anchor=bottom] (i) at (c.leftleg) {};
      \node[draw=red, dashed, fit={(m.top) (i.top)}]{};
    \end{tikzpicture}
    \xRightarrow{\textcolor{red}{\eta_\otimes}}
    \begin{tikzpicture}[baseline=(current bounding box.center)]
      \node[comult, dot=black] (c) {};
      \node[mult, anchor=leftleg] (m) at (c.rightleg) {};
      \node[identity, anchor=top] at (m.rightleg) {};
      \node[identity, anchor=bottom] (i) at (c.leftleg) {};
      \node[mult, anchor=rightleg, span=1.5] (m') at (m.top) {};
      \node[comult, dot=black, anchor=bottom] (c') at (m'.top) {};
      \node[draw=red, dashed, fit={(m'.north) (m.rightleg) (m'.leftleg)}]{};
    \end{tikzpicture}
    \overset{\textcolor{red}{\alpha^{-1}}}{\cong}
    \begin{tikzpicture}[baseline=(current bounding box.center)]
      \node[mult] (m) {};
      \node[comult, dot=black, anchor=bottom] (c) at (m.top) {};
      \node[mult, anchor=top] (m') at (m.leftleg) {};
      \node[comult, dot=black, anchor=leftleg] (c') at (m'.leftleg) {};
      \idstack[anchor=m.rightleg]{2}
      \node[draw=red, dashed, fit={(m'.top) (m'.leftleg) (m'.rightleg) (c'.bottom)}]{};
    \end{tikzpicture}
    \xRightarrow{\textcolor{red}{\varepsilon_\otimes}}
    \xusebox{eta-prime-target}
  \end{align*}
\end{definition}

\begin{lemma}[{\autocite[Proposition~4.8]{bartlettModularCategoriesRepresentations2015}}]
  Interpretations of $\mathcal{A}$ in $\Prof$ correspond to Cauchy
  complete autonomous categories.
\end{lemma}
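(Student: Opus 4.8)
The plan is to build on the earlier identification of interpretations of $\ra{\mathcal{M}}$ in $\Prof$ with Cauchy complete monoidal categories. Under that correspondence a Cauchy complete monoidal category $\cat{C}$ sends the multiplication $1$-morphism $\xusebox{tinymonoid-white}$ to the hom-profunctor $(x; a, b) \mapsto \cat{C}(x, a \otimes b)$ and its freely-adjoined right adjoint $\xusebox{tinycomonoid-black}$ to $((a, b); x) \mapsto \cat{C}(a \otimes b, x)$. An interpretation of $\mathcal{A}$ is exactly such a $\cat{C}$ together with the extra requirement that the composite $2$-morphisms $\gamma_L$ and $\gamma_R$ of \Cref{autonomous} be invertible, so the whole content is to prove that, for a Cauchy complete monoidal $\cat{C}$, invertibility of $\gamma_L$ and $\gamma_R$ holds if and only if $\cat{C}$ is autonomous.

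First I would unwind $\gamma_L$ and $\gamma_R$ in $\Prof$. Replacing each generating $2$-morphism occurring in them ($\eta_\otimes$, $\varepsilon_\otimes$, $\alpha$ and the unitors) by its coend description and composing, the source and target of $\gamma_L$ collapse by co-Yoneda to a pair of composite hom-profunctors and $\gamma_L$ itself becomes the canonical comparison between them induced by reassociating $\otimes$. The statement I am aiming for is that $\gamma_L$ is invertible exactly when the $1$-morphism of $\Prof$ obtained by currying $((a, x); y) \mapsto \cat{C}(a \otimes x, y)$ in the parameter $a$ — packaging the functors $a \otimes (-)$ over all $a$ — admits a right adjoint, and symmetrically that $\gamma_R$ is invertible exactly when the analogous $1$-morphism built from $((a, x); y) \mapsto \cat{C}(x \otimes a, y)$, packaging the functors $(-) \otimes a$, admits a right adjoint.

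Granting this, \Cref{cauchy} closes the forward direction. Since $\cat{C}$, and hence all of its finite powers and opposites, is Cauchy complete, a profunctor admits a right adjoint if and only if it is the covariant embedding of a functor; so the two $1$-morphisms above are the representations of functors ${}^{*}(-), (-)^{*} \colon \cat{C} \to \cat{C}$, furnishing natural isomorphisms $\cat{C}(a \otimes x, y) \cong \cat{C}(x, {}^{*}a \otimes y)$ and $\cat{C}(x \otimes a, y) \cong \cat{C}(x, y \otimes a^{*})$. Instantiating the unit and counit of each adjunction at the monoidal unit (via $\lambda$ and $\rho$) produces the coevaluation and evaluation maps, the two triangle equations of each adjunction become the two snake equations, and so every object of $\cat{C}$ acquires a left and a right dual; that is, $\cat{C}$ is autonomous. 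The converse is immediate: an autonomous $\cat{C}$ supplies exactly these adjunctions, whence the comparison maps defining $\gamma_L$ and $\gamma_R$ are isomorphisms and the interpretation of $\ra{\mathcal{M}}$ lifts to one of $\mathcal{A}$.

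The main obstacle is the bookkeeping in the first step: pinning down the precise source, target and components of $\gamma_L$ and $\gamma_R$ once all the coherence data is expanded, and checking that the comparison one thereby reads off is exactly the canonical map whose invertibility detects the representability of the dual-assigning profunctors. This is the computation carried out in \textcite[Proposition~4.8]{bartlettModularCategoriesRepresentations2015}, whose argument we follow.
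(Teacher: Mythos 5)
The paper does not prove this lemma; it imports it wholesale as Proposition~4.8 of \textcite{bartlettModularCategoriesRepresentations2015}, so there is no internal proof to compare against. Your reconstruction is, in outline, exactly the cited argument: invertibility of $\gamma_L$ and $\gamma_R$ from \Cref{autonomous} is unwound via co-Yoneda into representability of the duality profunctors, and \Cref{cauchy} converts representability into the dual-assigning functors. Two places where your sketch is loosest and where the real work lives: first, the profunctor $((a,x);y)\mapsto\cat{C}(a\otimes x,y)$ is just the corepresentation of $\otimes$ and always has a \emph{left} adjoint by \Cref{prof-embedding}, so you must be careful that the currying you perform isolates the correct mate (a $1$-cell $\cat{C}\to\cat{C}$ depending on the generic parameter $a$, landing in $\op{\cat{C}}$) before invoking \Cref{cauchy} --- as stated, ``admits a right adjoint'' is ambiguous between a vacuous condition and the intended one; second, passing from a natural isomorphism $\cat{C}(a\otimes x,y)\cong\cat{C}(x,{}^{*}a\otimes y)$ to genuine duals requires checking that the unit and counit of the induced adjunction $a\otimes(-)\dashv{}^{*}a\otimes(-)$ are of tensorial form, which holds here only because $\gamma_L$ is assembled from the coherence data $\eta_\otimes$, $\alpha$, $\varepsilon_\otimes$ and not from an arbitrary adjunction. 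Neither point is a wrong turn, but both are precisely the ``bookkeeping'' you defer, and they constitute the substance of the cited proof rather than a routine verification.
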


\subsection{Internal string diagrams}

\begin{figure}[!t]
\scalecobordisms{1}
\begin{calign}
\nonumber
\begin{tikzpicture}[baseline=(current bounding box.center)]
    \node[Cyl, top, bot, tall] (i) {};
    \node[Cyl, top, bot, tall, xshift=\cobgap+\cobwidth] (j) {};
    \begin{scope}[internal string scope]
        \node (a) at ([yshift=\toff] i.top) [above] {$A$};
        \node (c) at ([yshift=-\boff] i.bot) [below] {$C$};
        \draw (c.north) to [out=up, in=down] (a.south);
        \node [tiny label] at (i.center) {$f$};
        \node (b) at ([yshift=\toff] j.top) [above] {$B$};
        \node (d) at ([yshift=-\boff] j.bot) [below] {$D$};
        \draw (d.north) to [out=up, in=down] (b.south);
        \node [tiny label] at (j.center) {$g$};
    \end{scope}
  \end{tikzpicture}
  \xRightarrow{\eta_\otimes}
  \begin{tikzpicture}[baseline=(current bounding box.center)]
    \node[Pants, bot, belt scale=1.5] (pants) {};
    \node[Copants, top, bot, anchor=belt, belt scale=1.5] (copants) at (pants.belt) {};
    \begin{scope}[internal string scope]
        \node (a) at ([yshift=\toff] copants.leftleg) [above] {$A$};
        \node (c) at ([yshift=-\boff] pants.leftleg) [below] {$C$};
        \node [tiny label, minimum width=0.2cm] (f) at ([xshift=-0.25\cobwidth]pants.belt) {$f$};
        \draw (c.north) to [out=up, in=down] (f.center) to [out=up, in=down] (a.south);
        \node (b) at ([yshift=\toff] copants.rightleg) [above] {$B$};
        \node (d) at ([yshift=-\boff] pants.rightleg) [below] {$D$};
        \node [tiny label, minimum width=0.2cm] (g) at ([xshift=0.25\cobwidth]pants.belt) {$g$};
        \draw (d.north) to [out=up, in=down] (g.center) to [out=up, in=down] (b.south);
    \end{scope}
  \end{tikzpicture}
&
    \begin{tikzpicture}[baseline=(current bounding box.center)]
    \node[Pants, bot, top] (B) at (0,0) {};
    \node[Pants, bot, anchor=belt] (A) at (B.leftleg) {};
    \node[SwishL, bot, anchor=top] (C) at (B.rightleg) {};
    \begin{scope}[internal string scope]
        \node (i) at (B.belt) [above=\toff] {$A$};
        \node (j) at (A.leftleg) [below=\boff] {$B$};
        \node (k) at (A.rightleg) [below=\boff] {$C$};
        \node (l) at (C.bot) [below=\boff] {$D$};
        \node [tiny label] (f) at (0,0.13) {$f$};
        \node [tiny label] (g) at (A.center) {$g$};
        \node [tiny label] (h) at (C.center) {$h$};
        \draw (f.center) to (i.south);
        \draw (j.north) to [out=up, in=-135] (g.center);
        \draw (k.north) to [out=up, in=-35] (g.center);
        \draw (g.center) to [out=90, in=-135] node [left=-4pt] {} (f.center);
        \draw (l.north)
            to [out=90, in=-80] (h.center)
            to [out=up, in=down, out looseness=0.7]
                (B.rightleg)
            to [out=up, in=-45]
                node [right=-4pt, pos=0.11] {}
  (f.center);
    \end{scope}
  \end{tikzpicture}
  \overset{\alpha}{\cong}
  \begin{tikzpicture}[baseline=(current bounding box.center)]
    \node[Pants, bot, top] (B) at (0,0) {};
    \node[Pants, bot, anchor=belt] (A) at (B.rightleg) {};
    \node[SwishR, bot, anchor=top] (C) at (B.leftleg) {};
    \begin{scope}[internal string scope]
        \node (i) at (B.belt) [above=\toff] {$A$};
        \node (j) at (C.bot) [below=\boff] {$B$};
        \node (k) at (A.leftleg) [below=\boff] {$C$};
        \node (l) at (A.rightleg) [below=\boff] {$D$};
        \node [tiny label] (f) at (0.05\cobwidth,0.25\cobheight) {$f$};
        \node [tiny label] (g)  at (-0.25\cobwidth,-0.1\cobheight) {$g$};
        \draw (j.north)
            to [out=up, in=-130] (g.center);
        \draw (k.north)
            to [out=up, in=down] (B-rightleg.in-leftthird)
            to [out=up, in=-40] (g.center);
        \draw (l.north) to [out=up, in=down] (B-rightleg.in-rightthird)
            to [out=up, in=-60] (f.center);
        \draw (f.center) to [out=90, in=-90, looseness=2] (i.south);
        \draw (f.center) to [in=90, out=-120] (g.center);
        \node [tiny label] (h) at (0.8\cobwidth,-0.25\cobheight) {$h$};
    \end{scope}
  \end{tikzpicture}
&
  \begin{tikzpicture}[baseline=(current bounding box.center)]
    \node[Cyl, top, bot] (i) {};
    \node[Cyl, bot, anchor=top] (j) at (i.bottom) {};
    \begin{scope}[internal string scope]
        \node (x) at ([yshift=\toff] i.top) [above] {$A$};
        \node (y) at ([yshift=-\boff] j.bot) [below] {$B$};
        \draw (y.north) to [out=up, in=down] (x.south);
        \node [tiny label] at (i.center) {$f$};
    \end{scope}
  \end{tikzpicture}
  \cong
  \begin{tikzpicture}[baseline=(current bounding box.center)]
    \node[Cyl, top, bot] (i) {};
    \node[Cyl, bot, anchor=top] (j) at (i.bottom) {};
    \begin{scope}[internal string scope]
        \node (x) at ([yshift=\toff] i.top) [above] {$A$};
        \node (y) at ([yshift=-\boff] j.bot) [below] {$B$};
        \draw (y.north) to [out=up, in=down] (x.south);
        \node [tiny label] at (j.center) {$f$};
    \end{scope}
  \end{tikzpicture}
  \\\nonumber
  \mathrm{(a)} & \mathrm{(b)} & \mathrm{(c)}
  \end{calign}

  \caption{Examples of the internal string diagram formalism.}
  \label{fig:internalstring}
\end{figure}
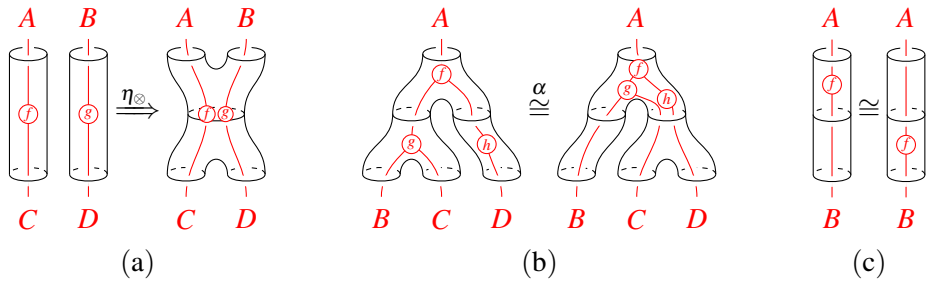

One special aspect of $\Prof$ is that it admits a calculus called
the \emph{internal string diagram} construction, when we consider presentations
which extend $\ra{\mathcal{M}}$. Informally speaking, the strings we use can
be inflated into tubes, containing a volume in which the standard graphical
calculus for monoidal categories operates, and $2$-morphisms in $\Prof$
correspond to rewrites of these tubes which act on the internal strings.

Some examples of the formalism are shown in Figure~\ref{fig:internalstring}. A feature of the formalism is that the internal strings must be read in the opposite direction to the ambient profunctors. Since our profunctor convention is bottom-to-top, the convention for internal strings is top-to-bottom.

\begin{example}
Figure~\ref{fig:internalstring}(a) illustrates the action of
$\tinymorphisms{
  \node[identity] (i) {};
  \node[identity, anchor=top] at (i.bottom) {};
  \node[identity, xshift=\cobgap+\cobwidth] (i') {};
  \node[identity, anchor=top] at (i'.bottom) {};
} \xRightarrow{\eta_\otimes}
\tinymorphisms{
  \node[mult] (m) {};
  \node[comult, dot=black, anchor=bottom] at (m.top) {};
}$ on internal strings. As a function of sets it maps $\cat{C}(A, C) \times
\cat{C}(B, D) \to \cat{C}(A \otimes B, C \otimes D)$, which sends $(f, g) \mapsto f
\otimes g$.

In Figure~\ref{fig:internalstring}(b) we show the action of the associator
$\tinymorphisms{
  \node[mult] (m) {};
  \node[mult, anchor=leftleg] (m') at (m.top) {};
  \node[identity, anchor=top] (i) at (m'.rightleg) {};
} \overset{\alpha}{\cong}
\tinymorphisms{
  \node[mult] (m) {};
  \node[mult, anchor=rightleg] (m') at (m.top) {};
  \node[identity, anchor=top] (i) at (m'.leftleg) {};
}$. As a function of sets, this natural transformation of profunctors has type
 $\cat{C}(A, (B
\otimes C) \otimes D) \to \cat{C}(A, B \otimes (C \otimes D))$, and acts by post-composition of $(B \otimes C) \otimes D \xrightarrow{\alpha_{B, C, D}} B \otimes
(C \otimes D)$.
\end{example}

The definition of profunctor composition for two profunctors $F\colon \op{\mathcal{B}} \times \cat{A} \to \Set$ and $G\colon \op{\mathcal{C}} \times \cat{B} \to \Set$ is
$G \diamond F \coloneqq \int^b G(-, b) \times F(b, =)$, where $\int$ denotes a coend \autocite[Equation~5.1]{loregianCoendCalculus2019}. This is interpreted in $\Set$ as a disjoint union quotiented by the least equivalence
relation generated by $(f \cdot g, h) \sim (f, g \cdot h)$.
From an internal string diagram perspective, this precisely says that morphisms can \enquote{move freely} through boundary circles. We illustrate this in Figure~\ref{fig:internalstring}(c).

A formal treatment of internal string diagrams is given in \textcite[\S~4]{bartlettModularCategoriesRepresentations2015}, which establishes that it is a sound calculus for reasoning about  $\ra{\mathcal{M}}$.

\section{The traced pseudomonoid presentation}\label{sec:traced-presentation}


In this section, we introduce our main contribution: the algebraic gadget in
$\Prof$ which admits traced monoidal (Cauchy complete) categories as
interpretations. This offers an \emph{external} perspective on traced monoidal
categories, akin to how monoidal categories can be viewed externally as
pseudomonoids internal to $\Cat$, versus a category equipped with
\emph{internal} structure.

\subsection{Traced monoidal categories}

In our framework, we seek to capture the standard notion of traced monoidal
category in terms of a presentation.

\begin{xlrbox}{tr-source}
  \begin{tikzpicture}[baseline=(current bounding box.center), decoration={
          markings, mark=at position 0.5 with {\arrowreversed[black]{Stealth[length=1mm]}}
        }]
    \node[comult, dot=black] (c) {};
    \node[mult, anchor=top] (m) at (c.bottom) {};
    \node[2Dcup, anchor=leftleg] (cup) at (m.rightleg) {};
    \node[2Dcap, anchor=leftleg] (cap) at (c.rightleg) {};
    \idstack[anchor=m.leftleg, direction=down]{1}
    \idstack[anchor=c.leftleg, direction=up]{1}
    \draw[postaction={decorate}] (cup.rightleg) -- (cap.rightleg);
  \end{tikzpicture}
\end{xlrbox}

\begin{xlrbox}{rtr-source}
  \begin{tikzpicture}[baseline=(current bounding box.center)]
    \node[mult] (m) {};
    \node[mult, dot=black, anchor=leftleg, span=1.5] (m') at (m.top) {};
    \node[twfrobcup, anchor=leftleg] (cup) at (m.rightleg) {};
    \node[identity, anchor=top] at (m'.rightleg) {};
    \idstack[anchor=m.leftleg]{2}
  \end{tikzpicture}
\end{xlrbox}

\begin{xlrbox}{ltr-source}
  \begin{tikzpicture}[baseline=(current bounding box.center)]
    \node[comult] (c) {};
    \node[twfrobcap, anchor=rightleg] (cap) at (c.leftleg) {};
    \node[comult, dot=black, anchor=rightleg, span=1.5] (c') at (c.bottom) {};
    \idstack[anchor=cap.leftleg]{1}
    \idstack[anchor=c.rightleg, direction=up]{2}
  \end{tikzpicture}
\end{xlrbox}

\begin{xlrbox}{ntr-source}
  \begin{tikzpicture}[baseline=(current bounding box.center)]
    \node[comult, dot=black] (c) {};
    \node[mult, anchor=top] (m) at (c.bottom) {};
    \node[frobcap, autonomous, anchor=leftleg] at (c.rightleg) {};
    \node[frobcup, autonomous, anchor=leftleg] (c') at (m.rightleg) {};
    \idstack[anchor=c'.rightleg, direction=up] {2}
    \idstack[anchor=m.leftleg, direction=down] {1}
    \idstack[anchor=c.leftleg, direction=up] {1}
  \end{tikzpicture}
\end{xlrbox}

\begin{definition}\label[definition]{traced-pseudomonoid}
  \begin{xlrbox}{tr-van-i-1}
    \begin{tikzpicture}[baseline=(current bounding box.center)]
      \node[comult, dot=black] (c) {};
      \node[mult, anchor=top] (m) at (c.bottom) {};
      \node[counit, dot=black, anchor=bottom] (cu) at (c.rightleg) {};
      \node[unit, anchor=top] (u) at (m.rightleg) {};
      \idstack[anchor=m.leftleg, direction=down]{1}
      \idstack[anchor=c.leftleg, direction=up]{1}
      \node[draw=blue, dashed, fit={(u) (m.north) (m.leftleg)}]{};
      \node[draw=blue, dashed, fit={(cu) (c.south) (c.leftleg)}]{};
    \end{tikzpicture}
  \end{xlrbox}

  \begin{xlrbox}{tr-van-i-2}
    \begin{tikzpicture}[baseline=(current bounding box.center)]
      \node[comult, dot=black] (c) {};
      \node[mult, anchor=top] (m) at (c.bottom) {};
      \node[2Dcap, anchor=leftleg] (cap) at (c.rightleg) {};
      \node[2Dcup, anchor=leftleg] (cup) at (m.rightleg) {};
      \node[counit, down, anchor=bottom] (cu) at (cup.rightleg) {};
      \node[unit, dot=black, down, anchor=top] (u) at (cap.rightleg) {};
      \idstack[anchor=m.leftleg, direction=down]{1}
      \idstack[anchor=c.leftleg, direction=up]{1}
      \node[draw=red, dashed, fit={(cu) (u) (cu.bottom) (u.top)}]{};
    \end{tikzpicture}
  \end{xlrbox}

  \begin{xlrbox}{tr-van-tensor-1}
    \begin{tikzpicture}[baseline=(current bounding box.center), decoration={
        markings, mark=at position 0.5 with {\arrowreversed[black]{Stealth[length=1mm]}}
      }]
      \node[comult, dot=black] (c) {};
      \node[mult, anchor=top] (m) at (c.bottom) {};
      \node[2Dcap, anchor=leftleg] (cap) at (c.rightleg) {};
      \node[2Dcup, anchor=leftleg] (cup) at (m.rightleg) {};
      \node[comult, dot=black, anchor=bottom] (c') at (c.leftleg) {};
      \node[mult, anchor=top] (m') at (m.leftleg) {};
      \node[2Dcap, anchor=leftleg, span=2] (cap') at (c'.rightleg) {};
      \node[2Dcup, anchor=leftleg, span=2] (cup') at (m'.rightleg) {};
      \idstack[anchor=m'.leftleg, direction=down]{1}
      \idstack[anchor=c'.leftleg, direction=up]{1}
      \draw[postaction={decorate}] (cup.rightleg) -- (cap.rightleg);
      \draw[postaction={decorate}] (cup'.rightleg) -- (cap'.rightleg);
      \node[draw=red, dashed, fit={(c.leftleg) (m.leftleg) (cap.north) (cup.south) (cup.rightleg)}]{};
      \node[draw=blue, dashed, fit={(c.south) (c.rightleg) (c'.leftleg)}]{};
      \node[draw=blue, dashed, fit={(m.north) (m.rightleg) (m'.leftleg)}]{};
    \end{tikzpicture}
  \end{xlrbox}

  \begin{xlrbox}{tr-van-tensor-2}
    \begin{tikzpicture}[baseline=(current bounding box.center), decoration={
        markings, mark=at position 0.5 with {\arrowreversed[black]{Stealth[length=1mm]}}
      }]
      \node[comult, dot=black] (c) {};
      \node[mult, anchor=top] (m) at (c.bottom) {};
      \node[comult, dot=black, anchor=bottom] (c') at (c.rightleg) {};
      \node[mult, anchor=top] (m') at (m.rightleg) {};
      \node[identity, anchor=bottom] (it) at (c'.leftleg) {};
      \node[identity, anchor=top] (ib) at (m'.leftleg) {};
      \node[2Dcap, anchor=leftleg] (cap) at (c'.rightleg) {};
      \node[2Dcup, anchor=leftleg] (cup) at (m'.rightleg) {};
      \node[2Dcap, anchor=leftleg, span=2.5] (cap') at (it.top) {};
      \node[2Dcup, anchor=leftleg, span=2.5] (cup') at (ib.bottom) {};
      \idstack[anchor=m.leftleg, direction=down]{4}
      \idstack[anchor=c.leftleg, direction=up]{4}
      \draw[postaction={decorate}] (cup.rightleg) -- (cap.rightleg);
      \draw[postaction={decorate}] (cup'.rightleg) -- (cap'.rightleg);
    \end{tikzpicture}
  \end{xlrbox}

  \begin{xlrbox}{tr-van-tensor-3}
    \begin{tikzpicture}[baseline=(current bounding box.center)]
      \node[comult, dot=black] (c) {};
      \node[mult, anchor=top] (m) at (c.bottom) {};
      \node[2Dcup, anchor=leftleg] (cup) at (m.rightleg) {};
      \node[2Dcap, anchor=leftleg] (cap) at (c.rightleg) {};
      \node[mult, down, dot=black, anchor=top] (c') at (cap.rightleg) {};
      \node[comult, down, anchor=bottom] (m') at (cup.rightleg) {};
      \idstack[anchor=m.leftleg, direction=down]{1}
      \idstack[anchor=c.leftleg, direction=up]{1}
      \node[draw=red, dashed, fit={(c'.rightleg) (m'.leftleg) (c'.north) (m'.south)}]{};
    \end{tikzpicture}
  \end{xlrbox}

  \begin{xlrbox}{tr-sup-1}
    \begin{tikzpicture}[baseline=(current bounding box.center), decoration={
        markings, mark=at position 0.5 with {\arrowreversed[black]{Stealth[length=1mm]}}
      }]
      \node[comult, dot=black] (c) {};
      \node[mult, anchor=top] (m) at (c.bottom) {};
      \node[2Dcup, anchor=leftleg] (cup) at (m.rightleg) {};
      \node[2Dcap, anchor=leftleg] (cap) at (c.rightleg) {};
      \idstack[anchor=m.leftleg, direction=down]{1}
      \idstack[anchor=c.leftleg, direction=up]{1}
      \node[identity, xshift=-\cobwidth-\cobgap, anchor=top] (i) at (id1.top) {};
      \idstack[anchor=i.bottom, direction=down]{3}
      \draw[postaction={decorate}] (cup.rightleg) -- (cap.rightleg);
      \node[draw=red, dashed, fit={(cap.rightleg) (cap.north) (cup.south) (c.leftleg) (m.leftleg)}]{};
      \node[draw=blue, dashed, fit={(m.top) (id2.top)}]{};
    \end{tikzpicture}
  \end{xlrbox}

  \begin{xlrbox}{tr-sup-2}
    \begin{tikzpicture}[baseline=(current bounding box.center), decoration={
        markings, mark=at position 0.5 with {\arrowreversed[black]{Stealth[length=1mm]}}
      }]
      \node[comult, dot=black] (c) {};
      \node[mult, anchor=top] (m) at (c.bottom) {};
      \node[comult, dot=black, anchor=bottom] (c') at (c.rightleg) {};
      \node[mult, anchor=top] (m') at (m.rightleg) {};
      \node[2Dcap, anchor=leftleg] (cap) at (c'.rightleg) {};
      \node[2Dcup, anchor=leftleg] (cup) at (m'.rightleg) {};
      \idstack[anchor=c'.leftleg, direction=up]{1}
      \idstack[anchor=m'.leftleg, direction=down]{1}
      \idstack[anchor=m.leftleg, direction=down]{2}
      \idstack[anchor=c.leftleg, direction=up]{2}
      \draw[postaction={decorate}] (cup.rightleg) -- (cap.rightleg);
      \node[draw=red, dashed, fit={(c.south) (c.leftleg) (c'.rightleg)}]{};
      \node[draw=red, dashed, fit={(m.north) (m.leftleg) (m'.rightleg)}]{};
    \end{tikzpicture}
  \end{xlrbox}

  \begin{xlrbox}{tr-sup-3}
    \begin{tikzpicture}[baseline=(current bounding box.center), decoration={
        markings, mark=at position 0.5 with {\arrowreversed[black]{Stealth[length=1mm]}}
      }]
      \node[comult, dot=black] (c) {};
      \node[mult, anchor=top] (m) at (c.bottom) {};
      \node[2Dcap, anchor=leftleg] (cap) at (c.rightleg) {};
      \node[2Dcup, anchor=leftleg] (cup) at (m.rightleg) {};
      \idstack[anchor=c.leftleg, direction=up]{1}
      \node[comult, dot=black, anchor=bottom] (c') at (id1.top) {};
      \idstack[anchor=m.leftleg, direction=down]{1}
      \node[mult, anchor=top] (m') at (id1.bottom) {};
      \draw[postaction={decorate}] (cup.rightleg) -- (cap.rightleg);
      \node[draw=red, dashed, fit={(cap.rightleg) (cap.north) (cup.south) (c.leftleg) (m.leftleg)}]{};
    \end{tikzpicture}
  \end{xlrbox}

  The \emph{traced pseudomonoid presentation} $\mathcal{T}$ is given by the data of $\ra{\mathcal{M}}$, and additionally:
  \begin{itemize}
    \item a generating $2$-morphism:
      \begin{equation}\label{eq:tr}\tag{$\Tr$}
        \xusebox{tr-source}
        \xRightarrow{\Tr}
        \xusebox{veryverytallid}
      \end{equation}
    \item equations:

    \begin{minipage}{0.55\textwidth}
      \begin{equation}
      \begin{tikzcd}[math mode=false, sep=tiny, row sep=-35pt, ampersand replacement=\&]
        \xusebox{tr-sup-1}
        \arrow[rr, Rightarrow, "\textcolor{red}{$\Tr$}"]
        \arrow[rd, Rightarrow, "\textcolor{blue}{$\eta_\otimes$}"']
        \&\&
        \xusebox{double-verytallid}
        \arrow[rr, Rightarrow, "$\eta_\otimes$"]
        \&\&
        \xusebox{eta-prime-target}
        \\
        \&
        \xusebox{tr-sup-2}
        \arrow[rr, Rightarrow, "\textcolor{red}{$\alpha$}", "\textcolor{red}{$\alpha$}"']
        \&\&
        \xusebox{tr-sup-3}
        \arrow[ru, Rightarrow, "\textcolor{red}{$\Tr$}"', end anchor={south west}]
        \&
      \end{tikzcd}
      \label{eq:tr-superposing}\tag{$\Tr$-sup}
      \end{equation}
    \end{minipage}
    \begin{minipage}{0.39\textwidth}
        \begin{equation}
        \begin{tikzcd}[math mode=false, sep=tiny, row sep=-25pt, ampersand replacement=\&]
          \xusebox{tr-van-i-1}
          \arrow[r, phantom, "$\Rightarrow$"]
          \arrow[rrr, Rightarrow, bend right, "\textcolor{blue}{$\rho$}", "\textcolor{blue}{$\rho$}"', start anchor={south}, end anchor={south west}]
          \&
          \xusebox{tr-van-i-2}
          \arrow[r, Rightarrow, "\textcolor{red}{$\psi_I$}"]
          \&
          \xusebox{tr-source}
          \arrow[r, Rightarrow, "$\Tr$"]
          \&
          \xusebox{veryverytallid}
        \end{tikzcd}
        \label{eq:tr-vanishing-identity}\tag{$\Tr$-van-$I$}
        \end{equation}
    \end{minipage}

      \begin{equation}
        \begin{tikzcd}[math mode=false, sep=tiny, row sep=-90pt, ampersand replacement=\&]
          \&
          \xusebox{tr-van-tensor-1}
          \arrow[rr, Rightarrow, "\textcolor{red}{$\Tr$}"]
          \arrow[ld, Rightarrow, "\textcolor{blue}{$\alpha$}", "\textcolor{blue}{$\alpha$}"', shorten=-2.5pt]
          \&
          \&
          \xusebox{tr-source}
          \arrow[rr, Rightarrow, "$\Tr$"]
          \&
          \&
          \xusebox{veryverytallid}
          \\
          \xusebox{tr-van-tensor-2}
          \arrow[rr, Rightarrow, bend right]
          \&
          \&
          \xusebox{tr-van-tensor-3}
          \arrow[rr, Rightarrow, "\textcolor{red}{$\varepsilon_\otimes$}"']
          \&
          \&
          \xusebox{tr-source}
          \arrow[ru, Rightarrow, "$\Tr$"', end anchor={[yshift=10pt]south west}]
          \&
        \end{tikzcd}
        \label{eq:tr-vanishing-tensor}\tag{$\Tr$-van-$\otimes$}
      \end{equation}
  \end{itemize}
\end{definition}

The following theorem records the fact that our presentation $\mathcal{T}$ correctly encodes traced monoidal categories, up to Cauchy completeness. A nice detail of the proof is certain traced monoidal category axioms arise \enquote{for free}, simply because the 2-morphism $\mathsf{Tr}$ is a natural transformation of profunctors.
\begin{theorem}\label[theorem]{traced-presentation-interpretation}
  Interpretations of $\mathcal{T}$ in $\Prof$ correspond to Cauchy complete
  traced monoidal categories.
\end{theorem}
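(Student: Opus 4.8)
The plan is to build on the earlier lemma identifying interpretations of $\ra{\mathcal{M}}$ in $\Prof$ with Cauchy complete monoidal categories. Since an interpretation of a presentation is exactly a choice of morphism for each generator subject to the stated equations, an interpretation of $\mathcal{T}$ is precisely such a category $\cat{C}$ together with a $2$-morphism $\Tr$ in $\Prof$ of the prescribed source and target obeying \eqref{eq:tr-superposing}, \eqref{eq:tr-vanishing-identity} and \eqref{eq:tr-vanishing-tensor}. So it suffices to show that, for a fixed Cauchy complete monoidal category $\cat{C}$, such $2$-morphisms $\Tr$ correspond naturally and bijectively to trace operators on $\cat{C}$ satisfying the axioms of \textcite{joyalTracedMonoidalCategories1996}.

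The first step is to identify the source $1$-morphism of $\Tr$. Unfolding the generators of $\ra{\mathcal{M}}$ from \Cref{right-adjoint-monoid} and using that the cap and cup of the compact closed structure of $\Prof$ are Hom-profunctors, the cap/cup/feedback-wire pattern of the source computes to the profunctor sending $(A,B)$ to the coend $\int^{X\in\cat{C}}\cat{C}(A\otimes X, B\otimes X)$; in internal string diagrams this is the picture of a generic morphism $A\otimes X\to B\otimes X$ with $X$ routed around a feedback loop, the coend arising exactly as the quotient that identifies morphisms sliding through the loop's boundary circle, as in \Cref{fig:internalstring}(c). The target $1$-morphism interprets as the Hom-profunctor $\cat{C}(A,B)=\id{\cat{C}}(A,B)$. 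By the universal property of the coend, a $2$-morphism $\Tr$ of the required type is therefore the same thing as a family $\Tr^{X}_{A,B}\colon\cat{C}(A\otimes X, B\otimes X)\to\cat{C}(A,B)$ natural in $A$ and $B$ and dinatural in $X$ --- so the left and right tightening axioms hold automatically because $\Tr$ is a $2$-morphism of $\Prof$, and the sliding axiom holds automatically because the domain is a coend, which is exactly how two of the traditional axiom families, tightening and sliding, get subsumed into the technology of $\Prof$. One should fix the handedness at this point, namely whether the looped $X$ sits on the left or the right of the tensor, and keep it consistent with the orientations chosen for the cap and cup.

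It then remains to read the surviving axioms off the three equational families of $\mathcal{T}$. Evaluating both sides of each pasting diagram on a generic element with the internal string calculus, \eqref{eq:tr-superposing} unwinds to the superposing axiom, \eqref{eq:tr-vanishing-identity} to vanishing with respect to the monoidal unit, and \eqref{eq:tr-vanishing-tensor} to vanishing with respect to $\otimes$ (two successive traces over $X$ and then $Y$ agreeing with a single trace over $X\otimes Y$), with the $\alpha$, $\rho$, $\psi_I$, $\eta_\otimes$ and $\varepsilon_\otimes$ labels decorating those diagrams matching the structural isomorphisms appearing in the corresponding axioms. For the converse direction, any trace operator on $\cat{C}$ assembles through the coend into a $2$-morphism $\Tr$ satisfying the three equations, and the two assignments are visibly mutually inverse; combining this with the $\ra{\mathcal{M}}$ lemma yields the stated correspondence.

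The step I expect to be the main obstacle is the rigorous --- rather than merely pictorial --- identification of the source profunctor with $\int^{X}\cat{C}(A\otimes X, B\otimes X)$: this requires care with the several variances involved, with the interchange and coherence manipulations that are elided elsewhere in the paper, and with the appeal to Cauchy completeness (via \Cref{cauchy}) needed to guarantee that the profunctor composites in play are genuinely representable. Once that identification is in hand, the passage from $\Tr$ to a dinatural trace operator is immediate from the coend's universal property, and confirming that \eqref{eq:tr-superposing}, \eqref{eq:tr-vanishing-identity} and \eqref{eq:tr-vanishing-tensor} recover the superposing and vanishing axioms amounts to a sequence of routine internal-string-diagram chases.
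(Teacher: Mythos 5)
Your proposal is correct and follows essentially the same route as the paper: identify the source of $\Tr$ with the coend $\int^{X}\cat{C}(A\otimes X, B\otimes X)$ so that tightening comes for free from $\Tr$ being a $2$-morphism of $\Prof$ and sliding from the coend quotient (the paper's Figure~\ref{fig:internalstring}(c) observation), leaving \eqref{eq:tr-superposing}, \eqref{eq:tr-vanishing-identity} and \eqref{eq:tr-vanishing-tensor} to supply superposing and the two vanishing axioms. This matches the paper's own remark that the remaining axioms arise \enquote{for free} from the technology of $\Prof$, and your flagged obstacle (variances, representability via Cauchy completeness) is indeed where the real work lies.
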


An additional consideration is that a balanced traced monoidal category has
additional equations it must satisfy, and these are small deviations from
straightforwardly replacing $\mathcal{T}$ with its balanced version (the
presentation obtained by substituting $\mathcal{L}$ for $\mathcal{M}$ in
\Cref{traced-pseudomonoid}).

\begin{definition}
  \begin{xlrbox}{tr-yank-1}
    \begin{tikzpicture}[baseline=(current bounding box.center)]
      \node[identity] (i) {};
      \node[braid, anchor=bottomleftleg] (b) at (i.top) {};
      \node[identity, anchor=bottom] at (b.topleftleg) {};
      \node[2Dcup, anchor=leftleg] (cup) at (b.bottomrightleg) {};
      \node[2Dcap, anchor=leftleg] (cap) at (b.toprightleg) {};
      \node[identity, down, anchor=bottom] at (cup.rightleg) {};
      \node[draw=red, dashed, fit={(b.topleftleg) (b.toprightleg)}]{};
    \end{tikzpicture}
  \end{xlrbox}

  \begin{xlrbox}{tr-yank-2}
    \begin{tikzpicture}[baseline=(current bounding box.center), decoration={
        markings, mark=at position 0.5 with {\arrowreversed[black]{Stealth[length=1mm]}}
      }]
      \node[comult, dot=black] (c) {};
      \node[mult, anchor=top] (m) at (c.bottom) {};
      \node[braid, anchor=toprightleg] (b) at (m.rightleg) {};
      \node[2Dcup, anchor=leftleg] (cup) at (b.bottomrightleg) {};
      \node[2Dcap, anchor=leftleg] (cap) at (c.rightleg) {};
      \idstack[anchor=b.bottomleftleg, direction=down]{1}
      \idstack[anchor=c.leftleg, direction=up]{1}
      \draw[postaction={decorate}] (cup.rightleg) -- (cap.rightleg);
      \node[draw=red, dashed, fit={(b.bottomleftleg) (b.bottomrightleg) (m.top)}]{};
    \end{tikzpicture}
  \end{xlrbox}

  The \emph{balanced traced pseudomonoid presentation} $\mathcal{T}^\prime$ is
  given by the balanced version of $\mathcal{T}$, and additionally the
  equation:

  \begin{equation}
    \begin{tikzcd}[math mode=false, sep=tiny, row sep=-25pt]
      &
      \xusebox{tr-yank-2}
      \arrow[rr, Rightarrow, "\textcolor{red}{$\sigma$}"]
      &&
      \xusebox{tr-source}
      \arrow[dr, Rightarrow, "$\Tr$"']
      &
      \\
      \xusebox{tr-yank-1}
      \arrow[ru, Rightarrow, "\textcolor{red}{$\eta_\otimes$}"]
      \arrow[rr, Rightarrow]
      &&
      \xusebox{verytallid}
      \arrow[rr, Rightarrow, "$\theta$"']
      &&
      \xusebox{verytallid}
    \end{tikzcd}\label{eq:tr-yank}\tag{$\mathcal{T}^\prime$-yank}
  \end{equation}
\end{definition}


\begin{theorem}\label[theorem]{balanced-traced-presentation-interpretation}
  Interpretations of $\mathcal{T}^\prime$ in $\Prof$ correspond to Cauchy
  complete balanced traced monoidal categories.
\end{theorem}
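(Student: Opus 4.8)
The plan is to reuse the proof of \Cref{traced-presentation-interpretation} with its base presentation enlarged from $\ra{\mathcal{M}}$ to $\ra{\mathcal{L}}$, together with \Cref{cauchy} and the earlier lemmas identifying interpretations of $\ra{\mathcal{M}}$ and its braided variant in $\Prof$ with Cauchy complete (braided) monoidal categories. First I would note that the proof of \Cref{traced-presentation-interpretation} invokes its base presentation $\ra{\mathcal{M}}$ only through the lemma identifying interpretations of $\ra{\mathcal{M}}$ with Cauchy complete monoidal categories: the trace generator \eqref{eq:tr} and the equations \eqref{eq:tr-superposing}, \eqref{eq:tr-vanishing-identity} and \eqref{eq:tr-vanishing-tensor} are built purely from the pseudo(co)monoid and compact-closed data, and the $2$-morphism $\Tr$ acquires its naturality in $A, B$ and its dinaturality (sliding) in $X$ for free from being a $2$-morphism of profunctors --- none of which is sensitive to the base. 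Substituting $\ra{\mathcal{L}}$ for $\ra{\mathcal{M}}$ therefore leaves the whole argument intact, and since interpretations of $\ra{\mathcal{L}}$ in $\Prof$ are exactly Cauchy complete balanced monoidal categories (the balanced analogue of the braided lemma), we conclude that an interpretation of the balanced version of $\mathcal{T}$ --- the presentation obtained by substituting $\mathcal{L}$ for $\mathcal{M}$ in \Cref{traced-pseudomonoid} --- is precisely a Cauchy complete balanced monoidal category $\cat{C}$ equipped with a trace operation $\Tr^X_{A,B}\colon \cat{C}(A \otimes X, B \otimes X) \to \cat{C}(A, B)$ that is natural, dinatural, and satisfies superposing and the two vanishing axioms, with the braiding $\sigma$ and twist $\theta$ so far only serving to make $\cat{C}$ balanced.

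It then remains to interpret the single extra equation \eqref{eq:tr-yank}. Unwinding \eqref{eq:tr-yank} in the internal string diagram calculus, its source is --- after the compact-closure rewrite witnessed by $\eta_\otimes$ and the relabelling by $\sigma$ --- the braiding $\sigma_{X,X}$ closed along the traced strand, so the upper composite evaluates to $\Tr^X_{X,X}(\sigma_{X,X})$ and the lower composite evaluates to the twist $\theta_X$; hence \eqref{eq:tr-yank} asserts exactly the balanced yanking axiom $\Tr^X_{X,X}(\sigma_{X,X}) = \theta_X$. Combining this with the previous paragraph, interpretations of $\mathcal{T}^\prime$ in $\Prof$ correspond to Cauchy complete balanced monoidal categories carrying a trace that satisfies tightening, sliding, superposing, vanishing and balanced yanking --- precisely a Cauchy complete balanced traced monoidal category. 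Both directions of the correspondence are the evident ones: an interpretation sends $\cdot$ to the underlying category and, by \Cref{cauchy}, is forced to send the freely-added right-adjoint generators to (co)representable profunctors, while conversely a Cauchy complete balanced traced monoidal category determines all of the remaining generators and validates all of the equations. As a sanity check, specialising to a trivial twist and invoking \Cref{symmetry-balanced} turns \eqref{eq:tr-yank} into the classical yanking equation $\Tr^X_{X,X}(\sigma_{X,X}) = \id{X}$, recovering the usual notion of traced symmetric monoidal category.

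The step I expect to be the main obstacle is the translation of \eqref{eq:tr-yank}: one must make the compact-closed plumbing together with the $\eta_\otimes$- and $\sigma$-relabellings fully explicit in the internal string calculus and check that the equation really does collapse to $\Tr^X_{X,X}(\sigma_{X,X}) = \theta_X$ with the correct variances, rather than to some cousin involving the inverse braiding, $\theta^{-1}$, or the twist on the wrong strand. A secondary point requiring care is confirming that balanced yanking is the \emph{only} deviation needed relative to the symmetric case --- that superposing and the two vanishing equations are literally unchanged --- which holds because those three equations manifestly make no reference to the braiding or twist, and because \Cref{symmetry-balanced} pins down exactly how the balanced and symmetric presentations are related.
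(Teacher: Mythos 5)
Your proposal is correct and follows essentially the same route the paper intends: reduce to \Cref{traced-presentation-interpretation} over the balanced base $\ra{\mathcal{L}}$ (the trace generator and the superposing/vanishing equations make no reference to the braiding or twist, so the argument carries over verbatim), and then observe that the single additional equation \labelcref{eq:tr-yank} unwinds in the internal string calculus to the balanced yanking axiom $\Tr^X_{X,X}(\sigma_{X,X}) = \theta_X$, with \Cref{symmetry-balanced} recovering the symmetric case. The only point you rightly flag as delicate --- checking the variance of the braiding and twist in \labelcref{eq:tr-yank} --- is a routine internal-string computation and does not affect the structure of the argument.
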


The symmetric version is a special case of this where $\theta$ is given by the
identity $2$-morphism, as per \Cref{symmetry-balanced}.

\section{Braided autonomous categories are traced}\label{sec:autonomous-traced}

It is known that every tortile (autonomous + pivotal + balanced) category
admits a canonical trace \autocite[\S~3]{joyalTracedMonoidalCategories1996}.
This result is known to generalise to any braided autonomous category, which we
replicate in our framework. Furthermore, we justify that such a trace is not
necessarily unique.

The key observation comes from the fact that in the braided autonomous
pseudomonoid presentation, we have a chain of isomorphisms:
\begin{equation}
  \xusebox{tr-source}
  \cong
  \begin{tikzpicture}[baseline=(current bounding box.center), decoration={
          markings, mark=at position 0.5 with {\arrowreversed[black]{Stealth[length=1mm]}}
        }]
    \node[comult, dot=black] (c) {};
    \node[mult, anchor=top] (m) at (c.bottom) {};
    \idstack[anchor=m.rightleg]{1}
    \node[frobcup, autonomous, anchor=leftleg] (cup) at (id1.bottom) {};
    \node[frobcap, autonomous, anchor=leftleg] (cap) at (cup.rightleg) {};
    \node[2Dcup, anchor=leftleg] (cup') at (cap.rightleg) {};
    \node[2Dcap, anchor=leftleg, span=3] (cap') at (c.rightleg) {};
    \idstack[anchor=m.leftleg]{2}
    \idstack[anchor=c.leftleg, direction=up]{1}
    \draw[postaction={decorate}] (cup'.rightleg) -- (cap'.rightleg);
  \end{tikzpicture}
  \cong
  \begin{tikzpicture}[baseline=(current bounding box.center), decoration={
          markings, mark=at position 0.5 with {\arrowreversed[black]{Stealth[length=1mm]}}
        }]
    \node[comult, dot=black] (c) {};
    \node[mult, anchor=top] (m) at (c.bottom) {};
    \node[frobcup, autonomous, anchor=leftleg] (cup) at (m.rightleg) {};
    \node[twfrobcap, autonomous, anchor=leftleg] (cap) at (c.rightleg) {};
    \node[braid, anchor=topleftleg, span=0.5] (b) at (cap.rightleg) {};
    \node[2Dcup, anchor=leftleg, span=0.5] (cup') at (b.bottomrightleg) {};
    \node[2Dcap, anchor=leftleg, span=0.5] (cap') at (b.toprightleg) {};
    \idstack[anchor=m.leftleg]{1}
    \idstack[anchor=b.bottomleftleg]{1}
    \idstack[anchor=c.leftleg, direction=up]{2}
    \draw[postaction={decorate}] (cup'.rightleg) -- (cap'.rightleg);
  \end{tikzpicture}
  \cong
  \begin{tikzpicture}[baseline=(current bounding box.center)]
    \node[comult, dot=black] (c) {};
    \node[mult, anchor=top] (m) at (c.bottom) {};
    \node[twfrobcap, autonomous, anchor=leftleg] (cap) at (c.rightleg) {};
    \node[frobcup, autonomous, anchor=leftleg] (c') at (m.rightleg) {};
    \idstack[anchor=c'.rightleg, direction=up] {2}
    \idstack[anchor=m.leftleg, direction=down] {1}
    \idstack[anchor=c.leftleg, direction=up] {2}
    \node[draw=red, dashed, fit={(cap.rightleg) (cap.leftleg) (cap.top)}]{};
  \end{tikzpicture}
  \overset{\textcolor{red}{\sigma}}{\cong}
  \xusebox{ntr-source}
  \label{eq:trace-to-nearly}\tag{$\Tr$-$\NTr$}
\end{equation}
The first isomorphism utilises the duality generated by
$(\tinymorphism{frobcup, autonomous}, \tinymorphism{frobcap, autonomous})$ in
the autonomous pseudomonoid presentation, the second is an isotopy, the third
utilises the compact structure of $\Prof$, and the fourth utilises braiding.

Secondly, in any monoidal category, we have the following $2$-morphism.

\begin{definition}
For a monoidal category, its \emph{nearly tracing} is the following 2-morphism:
  \begin{equation}\label{eq:nearly-tracing}\tag{$\NTr$}
    \begin{tikzpicture}[baseline=(current bounding box.center)]
      \node[comult, dot=black] (c) {};
      \node[mult, anchor=top] (m) at (c.bottom) {};
      \node[frobcap, autonomous, anchor=leftleg] at (c.rightleg) {};
      \node[frobcup, autonomous, anchor=leftleg] (c') at (m.rightleg) {};
      \idstack[anchor=m.leftleg, direction=down] {1}
      \idstack[anchor=c.leftleg, direction=up] {1}
      \idstack[anchor=c'.rightleg, direction=up] {2}
      \node[draw=red, dashed, fit={(m.top) (id1.top)}]{};
    \end{tikzpicture}
    \xRightarrow{\textcolor{red}{\eta_\otimes}}
    \begin{tikzpicture}[baseline=(current bounding box.center)]
      \node[comult, dot=black] (c) {};
      \node[comult, dot=black, anchor=leftleg, span=1.5] (c') at (c.bottom) {};
      \node[mult, anchor=top, span=1.5] (m') at (c'.bottom) {};
      \node[mult, anchor=top] (m) at (m'.leftleg) {};
      \node[frobcap, autonomous, anchor=leftleg] (cap) at (c.rightleg) {};
      \node[frobcup, autonomous, anchor=leftleg] (cup) at (m.rightleg) {};
      \idstack[anchor=m.leftleg, direction=down] {1}
      \idstack[anchor=c.leftleg, direction=up] {1}
      \idstack[anchor=cup.rightleg, direction=up] {1}
      \idstack[anchor=cap.rightleg, direction=down] {1}
      \node[draw=red, dashed, fit={(c'.south) (c'.rightleg) (c.leftleg)}]{};
      \node[draw=blue, dashed, fit={(m'.north) (m'.rightleg) (m.leftleg)}]{};
    \end{tikzpicture}
    \underset{\textcolor{blue}{\alpha}}{\overset{\textcolor{red}{\alpha}}{\cong}}
    \begin{tikzpicture}[baseline=(current bounding box.center)]
      \node[comult, dot=black] (c) {};
      \node[mult, anchor=top] (m) at (c.bottom) {};
      \node[comult, dot=black, anchor=bottom] (c') at (c.rightleg) {};
      \node[mult, anchor=top] (m') at (m.rightleg) {};
      \node[frobcap, autonomous, anchor=leftleg] (cap) at (c'.leftleg) {};
      \node[frobcup, autonomous, anchor=leftleg] (cup) at (m'.leftleg) {};
      \idstack[anchor=m.leftleg, direction=down] {1}
      \idstack[anchor=c.leftleg, direction=up] {1}
      \node[draw=red, dashed, fit={(c'.south) (cap.top) (cap.rightleg) (cap.leftleg)}]{};
      \node[draw=blue, dashed, fit={(m'.north) (cup.bottom) (cup.rightleg) (cup.leftleg)}]{};
    \end{tikzpicture}
    \underset{\textcolor{blue}{\varepsilon_\otimes}}{\overset{\textcolor{red}{\varepsilon_\otimes}}{\cong}}
    \begin{tikzpicture}[baseline=(current bounding box.center)]
      \node[comult, dot=black] (c) {};
      \node[counit, anchor=bottom] (cu) at (c.rightleg) {};
      \node[mult, anchor=top] (m) at (c.bottom) {};
      \node[unit, anchor=top] (u) at (m.rightleg) {};
      \idstack[anchor=m.leftleg, direction=down] {1}
      \idstack[anchor=c.leftleg, direction=up] {1}
      \node[draw=red, dashed, fit={(c.south) (c.leftleg) (cu)}]{};
      \node[draw=blue, dashed, fit={(m.north) (m.leftleg) (u)}]{};
    \end{tikzpicture}
    \underset{\textcolor{blue}{\lambda}}{\overset{\textcolor{red}{\lambda}}{\cong}}
    \xusebox{veryverytallid}
  \end{equation}
\end{definition}

It is then reasonable to come up with analogues to the tracing axioms, and ask
if \Cref{eq:nearly-tracing} satisfies them.

\begin{proposition}
  $\NTr$ satisfies the nearly-tracing axioms
  for any Cauchy complete monoidal category.
\end{proposition}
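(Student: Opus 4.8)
The plan is to reduce the claim to a computation in the free symmetric monoidal bicategory on $\ra{\mathcal{M}}$. Since interpretations of $\ra{\mathcal{M}}$ in $\Prof$ are exactly the Cauchy complete monoidal categories, verifying the nearly-tracing axioms --- the evident $\NTr$-analogues of \eqref{eq:tr-superposing}, \eqref{eq:tr-vanishing-identity} and \eqref{eq:tr-vanishing-tensor} --- amounts to checking three equalities of pasting diagrams assembled from the generating $2$-cells of $\ra{\mathcal{M}}$ and the compact structure of $\Prof$. As the internal string diagram calculus is sound for $\ra{\mathcal{M}}$, I would discharge each equality by rewriting both composites to a common internal string diagram. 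The first step is therefore to unfold the definition \eqref{eq:nearly-tracing} of $\NTr$ into a single internal rewrite, by composing the internal-string actions of its ingredients --- the adjunction unit $\eta_\otimes$, the two associators $\alpha$, the two adjunction counits $\varepsilon_\otimes$, and the two left unitors $\lambda$ --- using the dictionary illustrated in the examples following \Cref{fig:internalstring}. The outcome should be a clean, explicit description of $\NTr$ as the natural transformation which \enquote{contracts a would-be feedback loop} using only the structure maps of the ambient category $\cat{C}$, and I would record this as a preparatory lemma.

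Granting that description, the vanishing-$I$ axiom follows because substituting the monoidal unit for the loop object makes the contraction collapse under the monoid and comonoid unit laws of $\ra{\mathcal{M}}$ --- the $\lambda$, $\rho$ coherences of $\mathcal{M}$ together with the triangle identities for the adjunction unit and counit $\varphi_I$, $\psi_I$ --- so it holds in any monoidal category; this is the $\NTr$-incarnation of the observation, noted for \Cref{traced-presentation-interpretation}, that several trace axioms hold for free simply because the relevant $2$-morphism is natural. The superposing axiom has the same character: tensoring the source by a fixed spectator strand commutes with each of $\alpha$, $\lambda$, $\eta_\otimes$ and $\varepsilon_\otimes$ by naturality, so both composites of the axiom reduce to the same internal picture once the interchangers elided by the presentation are reinstated --- and by coherence for symmetric monoidal bicategories such a reinstatement exists and is essentially unique, so no real choice is involved.

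The remaining axiom, vanishing-$\otimes$, is where I expect the genuine work. It asserts that contracting a single loop on $X \otimes Y$ agrees with contracting nested loops on $X$ and then on $Y$, and since $\NTr$ is built from $\eta_\otimes$, $\alpha$, $\varepsilon_\otimes$ and $\lambda$ this unwinds into a pasting-diagram chase whose building blocks are the pentagon and triangle equations of $\mathcal{M}$ and the two adjunction-triangle equations of $\ra{\mathcal{M}}$ --- in effect the coherence of the composites $\gamma_L$ and $\gamma_R$ of \Cref{autonomous}, which holds in $\ra{\mathcal{M}}$ even though their inverses are not available there. I would carry this out by normalising both internal string diagrams: create and cancel $\eta_\otimes$/$\varepsilon_\otimes$ pairs via the triangle identities and reassociate via the pentagon until the two sides coincide. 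The delicate point throughout is the bookkeeping of the elided interchangers; they never obstruct the argument, but one must confirm that a globally consistent choice of them is available, which coherence guarantees. Once the three axioms are established as internal-string identities, soundness of the calculus for $\ra{\mathcal{M}}$ gives the proposition for every Cauchy complete monoidal category.
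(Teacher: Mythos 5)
The paper does not print a proof of this proposition, but your strategy --- computing the internal-string action of $\NTr$ as a single rewrite and then discharging the superposing and the two vanishing axioms via naturality, the adjunction triangle identities, and pseudomonoid coherence --- is exactly the internal-string-diagram methodology the paper uses throughout and explicitly flags (trace axioms holding \enquote{for free} because the $2$-morphism is a natural transformation of profunctors), so it is the same approach and it is sound. The one caveat is that no general coherence theorem for $\ra{\mathcal{M}}$ is available to shortcut the vanishing-$\otimes$ case, so the normalisation you sketch there must genuinely be carried out by hand; you correctly isolate this as the only substantive step.
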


\begin{corollary}
Any Cauchy complete category for which the following isomorphism exists, can be equipped with a trace:
  \begin{equation*}
    \xusebox{tr-source}
    \cong
    \xusebox{ntr-source}
  \end{equation*}
\end{corollary}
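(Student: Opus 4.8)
The plan is to realise the trace as the nearly-tracing $2$-morphism \eqref{eq:nearly-tracing} precomposed with the given isomorphism. For the statement even to be meaningful the category must already carry a Cauchy complete monoidal structure --- equivalently, an interpretation of $\ra{\mathcal{M}}$ in $\Prof$ --- since the two $1$-morphisms appearing in the displayed isomorphism, as well as $\NTr$ itself, are built from exactly that structure. Write $\phi$ for the hypothesised isomorphism. By \Cref{traced-presentation-interpretation}, equipping the category with a trace is the same as extending its interpretation of $\ra{\mathcal{M}}$ to one of $\mathcal{T}$, which is to say: choosing a $2$-morphism for the generator \eqref{eq:tr} that satisfies \eqref{eq:tr-superposing}, \eqref{eq:tr-vanishing-identity} and \eqref{eq:tr-vanishing-tensor}. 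I would take that $2$-morphism to be the vertical composite
\[
  \Tr \;\coloneqq\; \xusebox{tr-source} \xRightarrow{\phi} \xusebox{ntr-source} \xRightarrow{\NTr} \xusebox{veryverytallid},
\]
which type-checks exactly because the codomain of $\phi$ is the domain of $\NTr$.

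It then remains to verify the three equations, and the key point is that each should collapse to one of the nearly-tracing axioms --- which hold by the preceding proposition --- once the domain mismatch between $\Tr$ and $\NTr$ is absorbed into copies of $\phi$. Concretely, in the pasting diagram for \eqref{eq:tr-superposing} I would expand each occurrence of $\Tr$ as $\NTr$ after $\phi$; since $\phi$ is a $2$-cell of $\Prof$, hence a natural transformation, it commutes past the structural moves ($\eta_\otimes$, $\alpha$, and the elided interchange cells) labelling the neighbouring edges, so the copies of $\phi$ can be transported to the boundary of the diagram, where the $\phi$/$\phi^{-1}$ pairs cancel by invertibility; what is left is precisely the nearly-superposing axiom. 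The same manoeuvre is meant to reduce \eqref{eq:tr-vanishing-identity} and \eqref{eq:tr-vanishing-tensor} to their nearly-tracing counterparts. The naturality and dinaturality families of the traced-monoidal-category axioms need no separate treatment: as noted before \Cref{traced-presentation-interpretation}, they hold automatically for any interpretation of $\mathcal{T}$ because $\Tr$, being a $2$-morphism, is a natural transformation of profunctors.

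The step I expect to be the main obstacle is \eqref{eq:tr-vanishing-tensor}, whose left-hand branch carries two nested copies of the trace source --- one of them superposed with an ambient strand --- and hence two sites at which a copy of $\phi$ must be inserted and then slid. One has to confirm that $\phi$ whiskered by an identity wire really is the isomorphism appropriate to the superposed site, and that the two inserted copies migrate into positions where they cancel against those produced along the other branch of the composite; more generally, one must check that conjugating the nearly-tracing axioms by $\phi$ yields exactly the axioms of $\mathcal{T}$ and nothing weaker. This is ultimately a consequence of the soundness of the graphical calculus for monoidal bicategories together with the interchange structure of $\Prof$, but it is the one place where the argument exceeds a routine substitution; carrying it out with internal string diagrams, in the style of the proof of \Cref{traced-presentation-interpretation}, is the cleanest route to rigour. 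Granting this, $\Tr$ satisfies the three axioms of $\mathcal{T}$, and \Cref{traced-presentation-interpretation} then equips the category with a trace.
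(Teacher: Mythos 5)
Your proposal is correct and matches the paper's intended argument: the corollary is stated in the paper without an explicit proof, as an immediate consequence of the preceding proposition that $\NTr$ satisfies the nearly-tracing axioms, and the implicit construction is precisely yours --- define $\Tr$ as $\NTr$ precomposed with the hypothesised isomorphism and transport the axioms across it. Your additional care over where the conjugation argument could get delicate (the vanishing-$\otimes$ equation with its whiskered second copy of the trace source) is a reasonable elaboration of what the paper leaves unstated.
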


\noindent
Thanks to \Cref{eq:trace-to-nearly}, this implies the standard result that a braided monoidal category is traced.

\begin{remark}
  This trace is not necessarily unique, because we can arbitrarily apply a
  twist $2$-morphism $\theta$ for any available twisting. By fixing a chosen
  twist (in the sense of a balanced monoidal category), the trace becomes
  canonical as in \textcite[\S~3]{joyalTracedMonoidalCategories1996}.
\end{remark}

\section{On traced \texorpdfstring{$*$}{*}-autonomous categories}\label{sec:*-autonomous-categories}


In this section we apply our traced pseudomonoid technology to study phenomena
related to $*$-autonomous categories. Firstly, we show that for a Cauchy
complete $*$-autonomous category, a right $\otimes$-trace is equivalent to a
left $\invamp$-trace. Secondly, inspired by the result of
\textcite{hajgatoTracedAutonomousCategories2013} that every traced symmetric
$*$-autonomous category is compact closed, we seek to investigate the
non-symmetric version of this statement. We use our traced pseudomonoid
approach to derive a sufficient condition for a traced
$*$-autonomous category to be autonomous.

\subsection{\texorpdfstring{$*$}{*}-Autonomous categories}

\begin{definition}
  The \emph{Frobenius pseudomonoid presentation} $\mathcal{F}$ is obtained by combining the
  pseudomonoid presentation $(\cdot, \xusebox{tinymonoid-black},
  \xusebox{tinyunit-black})$ with the pseudocomonoid presentation $(\cdot,
  \xusebox{tinycomonoid-black}, \xusebox{tinycounit-black})$ on the same object, and additionally:
  \begin{itemize}
    \item invertible generating $2$-morphisms:

    \begin{equation}\label{eq:frobenius}\tag{Frob}
      \begin{tikzpicture}[baseline=(current bounding box.center)]
        \node[mult, dot=black] (m) {};
        \node[comult, anchor=leftleg, dot=black] (c) at (m.rightleg) {};
        \node[identity, anchor=top] at (m.leftleg) {};
        \node[identity, anchor=bottom] at (c.rightleg) {};
      \end{tikzpicture}
      \cong
      \begin{tikzpicture}[baseline=(current bounding box.center)]
        \node[mult, dot=black] (m) {};
        \node[comult, dot=black, anchor=bottom] at (m.top) {};
      \end{tikzpicture}
      \cong
      \begin{tikzpicture}[baseline=(current bounding box.center)]
        \node[comult, dot=black] (c) {};
        \node[mult, anchor=leftleg, dot=black] (m) at (c.rightleg) {};
        \node[identity, anchor=top] at (m.rightleg) {};
        \node[identity, anchor=bottom] at (c.leftleg) {};
      \end{tikzpicture}
    \end{equation}
    \item equational structure making these $2$-morphisms coherent.
  \end{itemize}
\end{definition}

\noindent\textcite[Definition~1.2]{dunnCoherenceFrobeniusPseudomonoids2016} give an
explicit presentation which they prove to be coherent, with equational structure given by the so-called \enquote{swallowtail equations}, which we omit here for brevity, using coherence directly to derive our results. In this case, the coherence result states that, given two parallel 2-morphisms $P,Q$, whose common source 1-morphism is connected and acyclic as a string diagram, then $P=Q$.

As above, we are interested in the presentation obtained by freely adding right
adjoints to particular generating $1$-morphisms.

\begin{definition}
  The \emph{right-adjoint Frobenius pseudomonoid presentation} $\mathcal{F}^*$
  is obtained from $\mathcal{F}$ by adding right-adjoint generating $1$-morphisms for the
  pseudomonoid multiplication and unit: $\xusebox{tinymonoid-black} \dashv
  \xusebox{tinycomonoid-white}$ and $\xusebox{tinyunit-black} \dashv
  \xusebox{tinycounit-white}$.
\end{definition}

  This structure corresponds to Cauchy complete $*$-autonomous categories,
  which we outline next. Full details can be found in
  \textcite[\S~2.7]{dunnCoherenceFrobeniusPseudomonoids2016}.

\begin{theorem}\label[theorem]{frobenius-presentation-interpretation}
  Interpretations of $\mathcal{F}^*$ in $\Prof$ correspond to Cauchy
  complete $*$-autonomous categories, where the generating $1$-morphisms
  $\xusebox{tinymonoid-black}$ and $\xusebox{tinyunit-black}$ represent
  $\invamp$ and $\bot$ respectively, and the derived $1$-morphisms
  $\xusebox{tinymonoid-white}$ and $\xusebox{tinyunit-white}$
  \autocite[Definition~2.33]{dunnCoherenceFrobeniusPseudomonoids2016}
  represent $\otimes$ and $I$ respectively.
\end{theorem}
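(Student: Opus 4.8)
The plan is to bootstrap from the lemma identifying interpretations of $\ra{\mathcal{M}}$ with Cauchy complete monoidal categories, and to absorb the remaining Frobenius data using the coherence theorem of \textcite[Definition~1.2]{dunnCoherenceFrobeniusPseudomonoids2016} recalled above. The first observation is that $\mathcal{F}^*$ contains a copy of $\ra{\mathcal{M}}$: its black pseudomonoid $(\cdot,\xusebox{tinymonoid-black},\xusebox{tinyunit-black})$ together with the freely-added right adjoints $\xusebox{tinymonoid-black}\dashv\xusebox{tinycomonoid-white}$, $\xusebox{tinyunit-black}\dashv\xusebox{tinycounit-white}$ and their triangle data form exactly the right-adjoint pseudomonoid presentation. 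Hence any interpretation of $\mathcal{F}^*$ in $\Prof$ restricts to one of $\ra{\mathcal{M}}$, which by the earlier lemma is precisely a Cauchy complete monoidal category; call its structure $(\cat{C},\invamp,\bot)$, so that $\xusebox{tinymonoid-black}$ represents $\invamp$ and $\xusebox{tinyunit-black}$ represents $\bot$ in the sense of \Cref{cauchy}.

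Next I would account for the black pseudocomonoid $(\cdot,\xusebox{tinycomonoid-black},\xusebox{tinycounit-black})$ and the Frobenius isomorphisms~\eqref{eq:frobenius}. Following \textcite[\S~2.7, Definition~2.33]{dunnCoherenceFrobeniusPseudomonoids2016}, one uses the adjunction units and counits together with~\eqref{eq:frobenius} to write down derived white $1$-morphisms $\xusebox{tinymonoid-white}$ and $\xusebox{tinyunit-white}$, which come equipped with adjunctions $\xusebox{tinymonoid-white}\dashv\xusebox{tinycomonoid-black}$ and $\xusebox{tinyunit-white}\dashv\xusebox{tinycounit-black}$ built from the same data; since their right adjoints exist, \Cref{cauchy} exhibits them as the representations of genuine functors $\otimes\colon\cat{C}\times\cat{C}\to\cat{C}$ and an object $I$. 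Unwinding the whole package through \Cref{cauchy}, the Frobenius isomorphisms~\eqref{eq:frobenius} then become exactly the structural natural isomorphisms of a $*$-autonomous structure on $(\cat{C},\invamp,\bot)$ — $\bot$ is a dualizing object and $(\otimes,I)$ its de Morgan dual — which is the content of \textcite[\S~2.7]{dunnCoherenceFrobeniusPseudomonoids2016}.

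Conversely, given a Cauchy complete $*$-autonomous category I would run these identifications backwards: the black pseudomonoid generators and their right adjoints from $\invamp$, $\bot$ via the $\ra{\mathcal{M}}$ lemma; the black pseudocomonoid generators from the representations of the de Morgan duals $\otimes$, $I$; and the Frobenius $2$-morphisms from the dualizing-object isomorphisms. It then remains to verify all the equations of $\mathcal{F}^*$ — pentagon, triangle, adjunction-triangle, and swallowtail. Rather than compute these by hand I would appeal to the coherence theorem for $\mathcal{F}$: each is an identity between parallel $2$-morphisms whose common source $1$-morphism is connected and acyclic as a string diagram, so the two sides necessarily agree. A final check that the two passages are mutually inverse up to isomorphism is routine, using the uniqueness clauses in \Cref{cauchy} and \Cref{prof-embedding}.

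The main obstacle is the second step: making precise Dunn--Vicary's derivation of the white generators and their adjunctions from~\eqref{eq:frobenius}, and then carrying out the bookkeeping that turns the Frobenius isomorphisms into the defining natural isomorphisms of a $*$-autonomous category in the sense of \textcite{barrNonsymmetricAutonomousCategories1995}. I would not reprove this in detail but cite \textcite[\S~2.7]{dunnCoherenceFrobeniusPseudomonoids2016}; a secondary point to get right is that Cauchy completeness is a property of the single underlying category shared by the pseudomonoid and the pseudocomonoid, so that \Cref{cauchy} may legitimately be applied to $\invamp$ and to $\otimes$ simultaneously.
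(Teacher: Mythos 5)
The paper offers no proof of this theorem: the sentence immediately preceding it defers all details to \textcite[\S~2.7]{dunnCoherenceFrobeniusPseudomonoids2016}, of which the statement is a recollection. Your outline --- restricting an interpretation of $\mathcal{F}^*$ to the copy of $\ra{\mathcal{M}}$ carried by the black pseudomonoid and its freely-added right adjoints, using \Cref{cauchy} to realise $\invamp$, $\bot$, $\otimes$, $I$ as genuine functors, and citing Dunn--Vicary for the derivation of the white generators and the verification of the equations --- is correct and is essentially the same approach, merely spelled out in more detail than the paper itself provides.
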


\begin{definition}
  The \emph{traced $*$-autonomous presentation} $\mathcal{T}^*$ is obtained by
  combining $\mathcal{T}$ and $\mathcal{F}^*$, i.e.\ the presentation
  containing a right-adjoint Frobenius pseudomonoid as in $\mathcal{F}^*$,
  where the derived left-adjoint pseudomonoid representing the tensor product
  $(\cdot, \xusebox{tinymonoid-white}, \xusebox{tinyunit-white})$ is
  additionally a traced pseudomonoid.
\end{definition}

\noindent
Its constituent parts are interpreted in $\Prof$ by Cauchy complete traced and
$*$-autonomous categories respectively, so the combined presentation is
interpreted by a category which is simultaneously traced and $*$-autonomous.

\subsection{Rotations}

In this section we show that for a $*$-autonomous category, a left
$\otimes$-trace is equivalent to a right $\invamp$-trace. The idea is that
$\otimes$ and $\invamp$ are related by duality, and that tracing can be
transported through this duality. Furthermore, the dual trace obtained is
\enquote{rotated}. Before proving this, we will first try to simplify
$\mathcal{T}^*$.

A complication is that $\mathcal{T}^*$ refers to
composites containing the $1$-morphism $\xusebox{tinymonoid-white}$, which with
respect to $\mathcal{F}^*$ is a derived $1$-morphism, as opposed to a
generating $1$-morphism. To simplify this, we shall progressively
rewrite the data of $\mathcal{T}^*$ in terms of generating $1$-morphisms. First, we dispense with the compact closed assumption in the ambient
bicategory ($\Prof$).

\begin{remark}\label[remark]{rotate-right-trace}
  We utilise the Frobenius duality generated by $(\tinymorphism{frobcup},
  \tinymorphism{frobcap})$, similarly to \Cref{eq:trace-to-nearly}, obtaining the
  isomorphism:
  \begin{equation*}
    \xusebox{tr-source}
    \cong
    \begin{tikzpicture}[baseline=(current bounding box.center), decoration={
            markings, mark=at position 0.5 with {\arrowreversed[black]{Stealth[length=1mm]}}
          }]
      \node[comult, dot=black] (c) {};
      \node[mult, anchor=top] (m) at (c.bottom) {};
      \idstack[anchor=c.rightleg, direction=up]{1}
      \node[frobcap, anchor=leftleg] (cap) at (id1.top) {};
      \node[frobcup, anchor=leftleg] (cup) at (cap.rightleg) {};
      \node[2Dcup, anchor=leftleg, span=3] (cup') at (m.rightleg) {};
      \node[2Dcap, anchor=leftleg] (cap') at (cup.rightleg) {};
      \idstack[anchor=m.leftleg, direction=down]{1}
      \idstack[anchor=c.leftleg, direction=up]{2}
      \draw[postaction={decorate}] (cup'.rightleg) -- (cap'.rightleg);
    \end{tikzpicture}
    \cong
    \begin{tikzpicture}[baseline=(current bounding box.center), decoration={
            markings, mark=at position 0.5 with {\arrowreversed[black]{Stealth[length=1mm]}}
          }]
      \node[comult, dot=black] (c) {};
      \node[mult, anchor=top] (m) at (c.bottom) {};
      \node[twfrobcup, anchor=leftleg] (cup) at (m.rightleg) {};
      \node[frobcap, anchor=leftleg] (cap) at (c.rightleg) {};
      \node[braid, anchor=bottomleftleg, span=0.5] (b) at (cup.rightleg) {};
      \node[2Dcup, anchor=leftleg, span=0.5] (cup') at (b.bottomrightleg) {};
      \node[2Dcap, anchor=leftleg, span=0.5] (cap') at (b.toprightleg) {};
      \idstack[anchor=m.leftleg]{2}
      \idstack[anchor=b.topleftleg, direction=up]{1}
      \idstack[anchor=c.leftleg, direction=up]{1}
      \draw[postaction={decorate}] (cup'.rightleg) -- (cap'.rightleg);
    \end{tikzpicture}
    \cong
    \begin{tikzpicture}[baseline=(current bounding box.center)]
      \node[comult, dot=black] (c) {};
      \node[mult, anchor=top] (m) at (c.bottom) {};
      \node[frobcap, anchor=leftleg] (m') at (c.rightleg) {};
      \node[twfrobcup, anchor=leftleg] at (m.rightleg) {};
      \idstack[anchor=m'.rightleg, direction=down] {2}
      \idstack[anchor=m.leftleg, direction=down] {2}
      \idstack[anchor=c.leftleg, direction=up] {1}
      \node[draw=red, dashed, fit={(c.south) (c.leftleg) (m'.top) (m'.rightleg)}]{};
    \end{tikzpicture}
    \overset{\textcolor{red}{\labelcref{eq:frobenius}}}{\cong}
    \xusebox{rtr-source}
  \end{equation*}
  Notice that such a $1$-morphism may be interpreted in any monoidal bicategory
  with duals, as opposed to the stronger requirement of compactness.
  Henceforth, we will derive a tracing presentation with respect to this
  $1$-morphism.
\end{remark}

\begin{definition}
  \begin{xlrbox}{rtr-van-i-1}
    \begin{tikzpicture}[baseline=(current bounding box.center)]
      \node[mult, dot=black] (c) {};
      \node[mult, anchor=top] (m) at (c.leftleg) {};
      \node[unit, dot=black, anchor=top] (u) at (c.rightleg) {};
      \node[unit, anchor=top] (u') at (m.rightleg) {};
      \idstack[anchor=m.leftleg, direction=down]{1}
      \node[draw=blue, dashed, fit={(u') (m.north) (m.leftleg)}]{};
      \node[draw=blue, dashed, fit={(u) (c.south) (c.leftleg)}]{};
    \end{tikzpicture}
  \end{xlrbox}

  \begin{xlrbox}{rtr-van-i-2}
    \begin{tikzpicture}[baseline=(current bounding box.center)]
      \node[mult, dot=black, span=1.5] (c) {};
      \node[mult, anchor=top] (m) at (c.leftleg) {};
      \idstack[anchor=c.rightleg] {3}
      \node[twfrobcup, dot=black, anchor=rightleg] (cup) at (id3.bottom) {};
      \node[unit, anchor=top] (u) at (m.rightleg) {};
      \node[counit, dot=black, anchor=bottom] (cu) at (cup.leftleg) {};
      \idstack[anchor=m.leftleg, direction=down]{4}
      \node[draw=red, dashed, fit={(cu) (u)}]{};
    \end{tikzpicture}
  \end{xlrbox}

  \begin{xlrbox}{rtr-van-tensor-1}
    \begin{tikzpicture}[baseline=(current bounding box.center)]
      \node[mult] (m) {};
      \node[mult, dot=black, anchor=leftleg, span=1.5] (m') at (m.top) {};
      \node[twfrobcup, anchor=leftleg] (cup) at (m.rightleg) {};
      \node[identity, anchor=top] at (m'.rightleg) {};
      \idstack[anchor=m.leftleg]{1}
      \node[mult, anchor=top] (m'') at (id1.bottom) {};
      \node[mult, dot=black, anchor=leftleg] (m''') at (m'.top) {};
      \idstack[anchor=m'''.rightleg]{4}
      \node[twfrobcup, anchor=leftleg, span=1.75] (cup') at (m''.rightleg) {};
      \draw (cup'.rightleg) -- (id4.bottom);
      \idstack[anchor=m''.leftleg]{2}
      \node[draw=red, dashed, fit={(m.leftleg) (m'.top) (cup.bottom) (cup.rightleg)}]{};
      \node[draw=blue, dashed, fit={(m.north) (m.rightleg) (m''.leftleg)}]{};
      \node[draw=blue, dashed, fit={(m'''.north) (m'.leftleg) (m'''.rightleg)}]{};
    \end{tikzpicture}
  \end{xlrbox}

  \begin{xlrbox}{rtr-van-tensor-2}
    \begin{tikzpicture}[baseline=(current bounding box.center)]
      \node[mult] (m) {};
      \node[mult, dot=black, anchor=leftleg, span=2.5] (m') at (m.top) {};
      \node[mult, anchor=top] (m'') at (m.rightleg) {};
      \idstack[anchor=m'.rightleg]{1}
      \node[mult, dot=black, anchor=top] (m''') at (id1.bottom) {};
      \node[twfrobcup, anchor=leftleg] at (m''.rightleg) {};
      \node[identity, anchor=top] (id) at (m''.leftleg) {};
      \node[swishr, anchor=top] (swish) at (id.bottom) {};
      \node[twfrobcup, anchor=leftleg, span=2] (cup') at (swish.bottom) {};
      \idstack[anchor=m.leftleg]{5}
      \idstack[direction=up, anchor=cup'.rightleg]{2}
    \end{tikzpicture}
  \end{xlrbox}

  \begin{xlrbox}{rtr-van-tensor-3}
    \begin{tikzpicture}[baseline=(current bounding box.center)]
      \node[mult] (m) {};
      \node[mult, dot=black, anchor=leftleg, span=1.5] (m') at (m.top) {};
      \node[mult, anchor=top] (m'') at (m.rightleg) {};
      \node[comult, dot=black, anchor=leftleg] (c) at (m''.leftleg) {};
      \idstack[anchor=m'.rightleg]{3}
      \node[twfrobcup, anchor=leftleg] at (c.bottom) {};
      \idstack[anchor=m.leftleg]{4}
      \node[draw=red, dashed, fit={(m''.rightleg) (m''.leftleg) (c.south) (m''.north)}]{};
    \end{tikzpicture}
  \end{xlrbox}

  \begin{xlrbox}{rtr-sup-1}
    \begin{tikzpicture}[baseline=(current bounding box.center)]
      \node[mult] (m) {};
      \node[mult, dot=black, anchor=leftleg, span=1.5] (m') at (m.top) {};
      \node[twfrobcup, anchor=leftleg] (cup) at (m.rightleg) {};
      \node[identity, anchor=top] at (m'.rightleg) {};
      \idstack[anchor=m.leftleg]{2}
      \node[identity, xshift=-2\cobwidth-2\cobgap, anchor=top] (i) at (m'.top) {};
      \node[identity, xshift=-0.75\cobwidth-0.75\cobgap, anchor=bottom] (i') at (id2.bottom) {};
      \draw (i.bottom) -- (i'.top);
      \node[draw=red, dashed, fit={(m'.rightleg) (cup.bottom) (m.leftleg) (m'.north)}]{};
      \node[draw=blue, dashed, fit={(m.top) (i.bottom)}]{};
    \end{tikzpicture}
  \end{xlrbox}

  \begin{xlrbox}{rtr-sup-2}
    \begin{tikzpicture}[baseline=(current bounding box.center)]
      \node[mult] (m) {};
      \node[mult, anchor=rightleg] (m'') at (m.top) {};
      \node[comult, dot=black, anchor=bottom] (c) at (m''.top) {};
      \node[mult, dot=black, anchor=leftleg, span=1.5] (m') at (c.rightleg) {};
      \node[twfrobcup, anchor=leftleg] (cup) at (m.rightleg) {};
      \idstack[anchor=m'.rightleg]{3}
      \idstack[anchor=m.leftleg]{2}
      \idstack[anchor=m''.leftleg]{3}
      \node[identity, xshift=-1.75\cobwidth-1.75\cobgap, anchor=top] (i) at (m'.top) {};
      \draw (i.bottom) -- (c.leftleg);
      \node[draw=red, dashed, fit={(c.south) (c.leftleg) (m'.rightleg) (m'.north)}]{};
      \node[draw=blue, dashed, fit={(m''.north) (m''.leftleg) (m.rightleg)}]{};
    \end{tikzpicture}
  \end{xlrbox}

  \begin{xlrbox}{rtr-sup-3}
    \begin{tikzpicture}[baseline=(current bounding box.center)]
      \node[mult] (m) {};
      \node[mult, dot=black, anchor=leftleg, span=1.5] (m') at (m.top) {};
      \node[twfrobcup, anchor=leftleg] (cup) at (m.rightleg) {};
      \node[identity, anchor=top] at (m'.rightleg) {};
      \idstack[anchor=m.leftleg]{2}
      \node[mult, anchor=top] (m'') at (id2.bottom) {};
      \node[comult, dot=black, anchor=bottom] (c) at (m'.top) {};
      \node[draw=red, dashed, fit={(m'.north) (m'.rightleg) (m.leftleg) (cup.bottom)}]{};
    \end{tikzpicture}
  \end{xlrbox}

  The \emph{rotational right $\otimes$-traced $*$-autonomous pseudomonoid
  presentation} is given by the data of $\mathcal{F}^*$, and additionally:
  \begin{itemize}
    \item a generating $2$-morphism:

      \begin{equation}\label{eq:rtr}\tag{$\RTr$}
        \xusebox{rtr-source}
        \xRightarrow{\RTr}
        \xusebox{veryverytallid}
      \end{equation}
    \item equations witnessing the axioms of traced monoidal categories:

    \begin{minipage}{0.5\textwidth}
        \begin{equation}
        \begin{tikzcd}[math mode=false, sep=0pt, row sep=-35pt, ampersand replacement=\&]
          \xusebox{rtr-sup-1}
          \arrow[rd, Rightarrow, "\textcolor{blue}{$\eta_\otimes$}"', start anchor={south}, end anchor={west}]
          \arrow[rr, Rightarrow, "\textcolor{red}{$\RTr$}"]
          \&\&
          \xusebox{double-tallid}
          \arrow[rr, Rightarrow, "$\eta_\otimes$"]
          \&\&
          \xusebox{eta-prime-target}
          \\
          \&
          \xusebox{rtr-sup-2}
          \arrow[rr, Rightarrow, "\textcolor{red}{$\labelcref{eq:frobenius}$}", "\textcolor{blue}{$\alpha$}"']
          \&\&
          \xusebox{rtr-sup-3}
          \arrow[ru, Rightarrow, "\textcolor{red}{$\RTr$}"', start anchor={east}, end anchor={south}]
          \&
        \end{tikzcd}
        \label{eq:rtr-superposing}\tag{$\RTr$-sup}
        \end{equation}
        \end{minipage}
        \begin{minipage}{0.49\textwidth}
      \begin{equation}
        \begin{tikzcd}[math mode=false, sep=tiny, row sep=-25pt, ampersand replacement=\&]
          \xusebox{rtr-van-i-1}
          \arrow[r, Rightarrow]
          \arrow[rrr, bend right=65, Rightarrow, "\textcolor{blue}{$\rho$}", "\textcolor{blue}{$\rho$}"', start anchor={south}, end anchor={south west}]
          \&
          \xusebox{rtr-van-i-2}
          \arrow[r, Rightarrow, "\textcolor{red}{$\psi_I$}"]
          \&
          \xusebox{rtr-source}
          \arrow[r, Rightarrow, "$\RTr$"]
          \&
          \xusebox{tallid}
        \end{tikzcd}
        \label{eq:rtr-vanishing-identity}\tag{$\RTr$-van-$I$}
      \end{equation}
      \end{minipage}
      \begin{equation}
        \begin{tikzcd}[math mode=false, sep=tiny, row sep=-55pt, ampersand replacement=\&]
          \&
          \xusebox{rtr-van-tensor-1}
          \arrow[rr, Rightarrow, "\textcolor{red}{$\RTr$}"]
          \arrow[ld, Rightarrow, "\textcolor{blue}{$\alpha$}", "\textcolor{blue}{$\alpha$}"', shorten >=-10pt]
          \&
          \&
          \xusebox{rtr-source}
          \arrow[rr, Rightarrow, "$\RTr$"]
          \&
          \&
          \xusebox{tallid}
          \\
          \xusebox{rtr-van-tensor-2}
          \arrow[rr, Rightarrow]
          \&
          \&
          \xusebox{rtr-van-tensor-3}
          \arrow[rr, Rightarrow, "\textcolor{red}{$\varepsilon_\otimes$}"']
          \&
          \&
          \xusebox{rtr-source}
          \arrow[ru, Rightarrow, "$\RTr$"']
          \&
        \end{tikzcd}
        \label{eq:rtr-vanishing-tensor}\tag{$\RTr$-van-$\otimes$}
      \end{equation}
  \end{itemize}
\end{definition}

\begin{proposition}\label[theorem]{rotational-right-traced-presentation-interpretation}
  Interpretations of the rotational right $\otimes$-traced $*$-autonomous
  pseudomonoid presentation are Cauchy complete traced $*$-autonomous
  categories, where $\otimes$ is traced on the right.
\end{proposition}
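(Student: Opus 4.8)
The plan is to exhibit a correspondence, bijective up to isomorphism, between interpretations of the rotational presentation and interpretations of $\mathcal{T}^*$, and then to conclude: an interpretation of $\mathcal{T}^*$ is precisely an interpretation of $\mathcal{F}^*$ --- that is, a Cauchy complete $*$-autonomous category, by \Cref{frobenius-presentation-interpretation} --- whose derived $\otimes$-pseudomonoid $(\cdot, \xusebox{tinymonoid-white}, \xusebox{tinyunit-white})$ additionally carries the structure of a traced pseudomonoid, which by \Cref{traced-presentation-interpretation} is exactly a trace on $\otimes$; so interpretations of $\mathcal{T}^*$ are exactly Cauchy complete traced $*$-autonomous categories. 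It is worth noting that, because $\mathcal{T}^*$ identifies the $\ra{\mathcal{M}}$-structure underlying $\mathcal{T}$ with the derived $\otimes$-structure of $\mathcal{F}^*$, the adjunction cells $\eta_\otimes,\varepsilon_\otimes,\varphi_I,\psi_I$ and the associator $\alpha$ occurring in the traced axioms are literally the derived ones of $\mathcal{F}^*$, hence are available in the rotational presentation too.

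First I would record that, since both presentations share the data of $\mathcal{F}^*$, the chain of isomorphisms of \Cref{rotate-right-trace} furnishes, in the free monoidal bicategory on $\mathcal{F}^*$, a canonical invertible $2$-morphism
\[
  \iota\colon\ \xusebox{tr-source}\ \Longrightarrow\ \xusebox{rtr-source},
\]
built from the Frobenius duality $(\tinymorphism{frobcup},\tinymorphism{frobcap})$, an isotopy, the compact structure of $\Prof$, and the Frobenius $2$-morphism \labelcref{eq:frobenius}. Pre-composition with $\iota^{-1}$, respectively $\iota$, then sets up a bijection between choices of a generating $2$-morphism \labelcref{eq:tr} and choices of a generating $2$-morphism \labelcref{eq:rtr}, via $\RTr = \Tr\circ\iota^{-1}$ and $\Tr = \RTr\circ\iota$. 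So what remains is to verify that, under this substitution, the rotational equational families \labelcref{eq:rtr-superposing}, \labelcref{eq:rtr-vanishing-identity}, and \labelcref{eq:rtr-vanishing-tensor} hold if and only if the three families \labelcref{eq:tr-superposing}, \labelcref{eq:tr-vanishing-identity}, and \labelcref{eq:tr-vanishing-tensor} of \Cref{traced-pseudomonoid} hold.

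This axiom-by-axiom check is the substance of the argument. For each rotational axiom I would whisker its pasting diagram with the appropriate structural cell --- a single copy of $\iota$ for \labelcref{eq:rtr-vanishing-identity}; $\iota$ superposed with an identity $1$-morphism for \labelcref{eq:rtr-superposing}; and two interacting copies of $\iota$ together with the tensor-compatibility associator for \labelcref{eq:rtr-vanishing-tensor} --- and observe that the source and target $1$-morphisms of the rotational axiom then differ from those of the matching $\Tr$-axiom only by whiskered or tensored copies of the building blocks of $\iota$ (Frobenius cups and caps, associators, braidings, and the Frobenius isomorphism). Once those structural cells are coherently cancelled, the two pasting diagrams coincide; that the cancellation is automatic is exactly what coherence for the Frobenius pseudomonoid \autocite[Definition~1.2]{dunnCoherenceFrobeniusPseudomonoids2016} gives (any two parallel $2$-morphisms with connected acyclic common source agree), used together with the triangle equations for the $\otimes$-adjunctions and the compact-closed coherence of $\Prof$. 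In particular no new equation is imposed and none is lost, so the correspondence of interpretations is exact.

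The hard part will be the bookkeeping for \labelcref{eq:rtr-vanishing-tensor}, where two copies of $\iota$ must be threaded past one another and reconciled with the associator witnessing compatibility of the trace with $\otimes$, and where one has to arrange that all the Frobenius caps and braidings brought in by the two copies of $\iota$ sit inside a connected acyclic diagram so that coherence applies; \labelcref{eq:rtr-vanishing-identity} and \labelcref{eq:rtr-superposing} reduce almost immediately to their $\Tr$-counterparts. Once the correspondence is established the remaining assertions are immediate: Cauchy completeness is inherited from \Cref{frobenius-presentation-interpretation}, and that the induced trace is a \emph{right} $\otimes$-trace (rather than a left one, or a $\invamp$-trace) is fixed by the shape of \labelcref{eq:rtr}, whose $\otimes$-feedback wire re-enters on the right in the sense of \autocite[\S~5]{selingerSurveyGraphicalLanguages2009}, just as for the unrotated $\Tr$ of \Cref{traced-pseudomonoid}.
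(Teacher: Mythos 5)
Your proposal is correct and follows essentially the same route as the paper: the paper's proof likewise reduces to \Cref{traced-presentation-interpretation} by transporting the trace data along the isomorphism of \Cref{rotate-right-trace}, though it states this in two sentences where you spell out the axiom-by-axiom transport and the role of Frobenius coherence in cancelling the structural cells.
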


\begin{proof}
  From \Cref{traced-presentation-interpretation}, it suffices to show that
  we can recover all the data of $\mathcal{T}$ from this presentation.
  This holds by transporting along the isomorphism defined in
  \Cref{rotate-right-trace}.
\end{proof}

We have weakened our setting to a symmetric monoidal bicategory with duals,
rather than a compact closed bicategory (notice the lack of cups and caps in
our string diagrams). However, they still mention $\xusebox{tinymonoid-white}$,
as we would like to discuss a traced monoidal category where the trace is with
respect to $\otimes$ (as opposed to the other tensor product $\invamp$).

\begin{definition}
  The \emph{rotational left $\invamp$-traced $*$-autonomous pseudomonoid
  presentation} is given by the data of $\mathcal{F}^*$, and additionally:
  \begin{itemize}
    \item a generating $2$-morphism:

      \begin{equation}\label{eq:ltr}\tag{$\LTr$}
        \xusebox{ltr-source}
        \xRightarrow{\LTr}
        \xusebox{veryverytallid}
      \end{equation}
    \item equations witnessing the axioms of traced monoidal categories,
      analogous to \ref{eq:rtr}.
  \end{itemize}
\end{definition}

\begin{proposition}
  Interpretations of the rotational left $\invamp$-traced $*$-autonomous
  pseudomonoid presentation are Cauchy complete traced $*$-autonomous
  categories, where
  $\invamp$ is traced on the left.
\end{proposition}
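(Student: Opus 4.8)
The plan is to mirror the proof of \Cref{rotational-right-traced-presentation-interpretation}, exchanging the roles of $\otimes$ and $\invamp$ and of \enquote{left} and \enquote{right} throughout. By \Cref{frobenius-presentation-interpretation} an interpretation of $\mathcal{F}^*$ is a Cauchy complete $*$-autonomous category in which the black pseudomonoid $\xusebox{tinymonoid-black}$ is $\invamp$, so it suffices to check that the extra data of this presentation is equivalent to equipping that $\invamp$-pseudomonoid with a left trace, in a form that feeds into \Cref{traced-presentation-interpretation}. The one structural input beyond that theorem is its left-handed variant: running the proof of \Cref{traced-presentation-interpretation} with every string diagram reflected left-to-right (which sends a pseudomonoid to its reverse and interchanges left and right traces) shows that a \emph{left}-traced analogue of $\mathcal{T}$ has, as its $\Prof$-interpretations, exactly the Cauchy complete monoidal categories carrying a left trace.

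Next I would construct the left-handed, $\invamp$-flavoured counterpart of the chain of isomorphisms in \Cref{rotate-right-trace}. Starting from the source $\xusebox{ltr-source}$ of \ref{eq:ltr}, I would use the Frobenius duality generated by $(\tinymorphism{frobcup}, \tinymorphism{frobcap})$ on the black comonoid side, a planar isotopy, the braiding, and \ref{eq:frobenius} to produce an isomorphism between $\xusebox{ltr-source}$ and the standard left-trace source for $\invamp$, built from $\xusebox{tinycomonoid-black}$ and the right adjoint $\xusebox{tinycomonoid-white}$ of the $\invamp$-multiplication. This is the mirror image of the chain in \Cref{rotate-right-trace}, with cups and caps reflected, the twist on the Frobenius cap/cup swapped accordingly, and the braid reversed; exactly as there, the resulting $1$-morphism only uses duals, so it stays inside $\mathcal{F}^*$ and needs no compact structure.

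With that isomorphism fixed, \ref{eq:ltr} transports to a left trace on the $\invamp$-pseudomonoid, and the equations left implicit as \enquote{analogous to \ref{eq:rtr}} --- the left-handed superposing, vanishing-$I$, and vanishing-$\invamp$ squares --- transport onto the left-traced pseudomonoid axioms supplied by the reflected \Cref{traced-presentation-interpretation}; conversely any left trace on $\invamp$ in a Cauchy complete $*$-autonomous category yields, through the same isomorphism, data satisfying \ref{eq:ltr} and these axioms, so the correspondence is an equivalence. I expect the main obstacle to be purely a matter of handedness bookkeeping: one must verify that the Frobenius-plus-braiding rotation lands on a \emph{left} trace of $\invamp$ and not on a right trace, or on a trace of $\otimes$, and that the \enquote{analogous to \ref{eq:rtr}} equations are the correctly mirrored versions --- in particular that the associator and counit moves occur with the right orientations --- since in the non-symmetric setting these four options are genuinely distinct. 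Apart from that, the argument is a diagram-by-diagram replay of \Cref{rotational-right-traced-presentation-interpretation}.
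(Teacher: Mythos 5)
Your proposal is correct and follows essentially the same route as the paper, whose entire proof is the single line that the argument is symmetric to that of \Cref{rotational-right-traced-presentation-interpretation}; you have simply spelled out what that symmetry amounts to (mirroring the isomorphism chain of \Cref{rotate-right-trace}, invoking the left-handed variant of \Cref{traced-presentation-interpretation}, and transporting the trace data). The handedness bookkeeping you flag is a fair concern, but it is exactly the content the paper delegates to the word \enquote{symmetric}.
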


\begin{proof}
  Symmetric to the proof of
  \Cref{rotational-right-traced-presentation-interpretation}.
\end{proof}

\noindent
We can now state the main result of this section.

\begin{theorem}\label[theorem]{ltr-rtr}
  For a Cauchy complete $*$-autonomous category, a right $\otimes$-trace and a left $\invamp$-trace are equivalent.
\end{theorem}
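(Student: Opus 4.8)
The plan is to show that the rotational right $\otimes$-traced and rotational left $\invamp$-traced $*$-autonomous presentations are mutually derivable relative to $\mathcal{F}^*$; by \Cref{rotational-right-traced-presentation-interpretation} and its left-handed analogue this will yield a bijective correspondence between right $\otimes$-traces and left $\invamp$-traces on any fixed Cauchy complete $*$-autonomous category. The geometric content is the De Morgan duality of a $*$-autonomous category: since $(A \otimes B)^* \cong B^* \invamp A^*$, the dualising functor gives $\cat{C}(A \otimes X, B \otimes X) \cong \cat{C}(X^* \invamp B^*, X^* \invamp A^*)$, so after renaming the bound variable a right $\otimes$-trace is literally a left $\invamp$-trace, the switch of handedness being exactly the order-reversal in the formula for $\invamp$.

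First I would exhibit a canonical isomorphism $\xusebox{rtr-source} \cong \xusebox{ltr-source}$ in the free symmetric monoidal bicategory on $\mathcal{F}^*$, constructed by the same moves used in \Cref{rotate-right-trace} and \Cref{eq:trace-to-nearly}: the Frobenius self-dualities $(\tinymorphism{frobcup}, \tinymorphism{frobcap})$ rotate the black multiplication past the white comultiplication, the symmetry reorders the freed legs, and the Frobenius isomorphisms \ref{eq:frobenius} translate between the white and black generators. Transporting the generating $2$-morphism $\RTr$ across this isomorphism produces a $2$-morphism of exactly the shape of $\LTr$, and since the isomorphism is invertible, $\RTr$ and the transported morphism are interderivable; symmetrically, $\LTr$ transports back to $\RTr$.

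It then remains to check that, under this transport, the three axiom families \ref{eq:rtr-superposing}, \ref{eq:rtr-vanishing-identity}, and \ref{eq:rtr-vanishing-tensor} become the corresponding $\LTr$-axioms. Here the coherence theorem for Frobenius pseudomonoids quoted after \Cref{eq:frobenius} does the work: every $2$-morphism appearing in those diagrams apart from the trace itself is assembled from structural data (associators, the Frobenius isomorphisms, the adjunction unit and counit), and the isomorphism $\xusebox{rtr-source} \cong \xusebox{ltr-source}$ is likewise structural, so each transported composite is again structural with a connected acyclic source $1$-morphism and is therefore uniquely determined by its boundary. Consequently a transported axiom will coincide with the corresponding $\LTr$-axiom as soon as the outer boundary $1$-morphisms agree, which is a finite check of string-diagram shapes rather than an equality between composites; the two transports being mutually inverse, the presentations are equivalent.

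The hard part will be exactly this bookkeeping: one must confirm that the superposing axiom \ref{eq:rtr-superposing}, whose extra wire has to be threaded consistently past the rotation, and the vanishing-$\otimes$ axiom \ref{eq:rtr-vanishing-tensor}, which composes two traces, rotate to the superposing and \emph{left}-handed vanishing axioms for $\invamp$ without acquiring a stray braiding or a mismatched associator. Once the boundary shapes are confirmed, Frobenius coherence closes each case automatically.
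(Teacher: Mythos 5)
Your proposal matches the paper's approach: the theorem is obtained by relating the two ``rotational'' presentations over $\mathcal{F}^*$, exhibiting the source $1$-morphisms of $\RTr$ and $\LTr$ as canonically isomorphic via the Frobenius duality, isotopy, and \labelcref{eq:frobenius} moves of \Cref{rotate-right-trace}, transporting the generating $2$-morphism across this isomorphism, and discharging the axiom correspondences by the coherence theorem for Frobenius pseudomonoids applied to the purely structural segments of each composite. Your reading of the handedness switch as the order reversal in $(B\otimes X)^{*}\cong X^{*}\invamp B^{*}$, and your identification of the superposing and vanishing-$\otimes$ bookkeeping as the substantive check, are both consistent with the paper's argument.
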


\subsection{Invertible linear distributivity}

\begin{definition}
  A $*$-autonomous category has distinguished maps called \emph{linear
  distributors}; for all objects $A$, $B$, and $C$:
  \begin{equation*}
    A \otimes (B \invamp C) \xrightarrow{\delta_L} (A \otimes B) \invamp C, \quad
    (A \invamp B) \otimes C \xrightarrow{\delta_R} A \invamp (B \otimes C).
  \end{equation*}
  With respect to $\mathcal{F}^*$, these are corepresented by the composite
  $2$-morphisms:
  \begin{equation*}
    \delta_L \coloneqq
    \begin{tikzpicture}[baseline=(current bounding box.center)]
      \node[comult] (c) {};
      \node[comult, dot=black, anchor=bottom] (m) at (c.leftleg) {};
      \node[identity, anchor=bottom] (i) at (c.rightleg) {};
      \node[draw=red, dashed, fit={(m.rightleg) (i.top)}]{};
    \end{tikzpicture}
    \xRightarrow{\textcolor{red}{\eta_\invamp}}
    \begin{tikzpicture}[baseline=(current bounding box.center)]
      \node[comult, span=1.5] (c) {};
      \node[comult, dot=black, anchor=bottom] (m) at (c.leftleg) {};
      \node[identity, anchor=bottom] at (c.rightleg) {};
      \node[mult, dot=black, anchor=leftleg] (m') at (m.rightleg) {};
      \node[comult, anchor=bottom] (c') at (m'.top) {};
      \idstack[anchor=m.leftleg, direction=up]{2}
      \node[draw=red, dashed, fit={(m'.north) (m.leftleg) (m'.rightleg) (m.bottom)}]{};
    \end{tikzpicture}
    \overset{\textcolor{red}{\labelcref{eq:frobenius}}}{\cong}
    \begin{tikzpicture}[baseline=(current bounding box.center)]
      \node[comult, dot=black] (c) {};
      \node[comult, anchor=bottom] (m) at (c.rightleg) {};
      \node[mult, dot=black, anchor=top] (m') at (c.bottom) {};
      \node[comult, anchor=leftleg] (c') at (m'.leftleg) {};
      \idstack[anchor=c.leftleg, direction=up]{1}
      \node[draw=red, dashed, fit={(m'.north) (m'.leftleg) (m'.rightleg) (c'.bottom)}]{};
    \end{tikzpicture}
    \xRightarrow{\textcolor{red}{\varepsilon_\invamp}}
    \begin{tikzpicture}[baseline=(current bounding box.center)]
      \node[comult, dot=black] (c) {};
      \node[comult, anchor=bottom] (m) at (c.rightleg) {};
      \idstack[anchor=c.leftleg, direction=up]{1}
    \end{tikzpicture}
    \quad
    \delta_R \coloneqq
    \begin{tikzpicture}[baseline=(current bounding box.center)]
      \node[comult] (c) {};
      \node[comult, dot=black, anchor=bottom] (m) at (c.rightleg) {};
      \node[identity, anchor=bottom] (i) at (c.leftleg) {};
      \node[draw=red, dashed, fit={(m.leftleg) (i.top)}]{};
    \end{tikzpicture}
    \xRightarrow{\textcolor{red}{\eta_\invamp}}
    \begin{tikzpicture}[baseline=(current bounding box.center)]
      \node[comult, span=1.5] (c) {};
      \node[comult, dot=black, anchor=bottom] (m) at (c.rightleg) {};
      \node[identity, anchor=bottom] at (c.leftleg) {};
      \node[mult, dot=black, anchor=rightleg] (m') at (m.leftleg) {};
      \node[comult, anchor=bottom] (c') at (m'.top) {};
      \idstack[anchor=m.rightleg, direction=up]{2}
      \node[draw=red, dashed, fit={(m'.north) (m.rightleg) (m'.leftleg) (m.bottom)}]{};
    \end{tikzpicture}
    \overset{\textcolor{red}{\labelcref{eq:frobenius}}}{\cong}
    \begin{tikzpicture}[baseline=(current bounding box.center)]
      \node[comult, dot=black] (c) {};
      \node[comult, anchor=bottom] (m) at (c.leftleg) {};
      \node[mult, dot=black, anchor=top] (m') at (c.bottom) {};
      \node[comult, anchor=rightleg] (c') at (m'.rightleg) {};
      \idstack[anchor=c.rightleg, direction=up]{1}
      \node[draw=red, dashed, fit={(m'.north) (m'.rightleg) (m'.leftleg) (c'.bottom)}]{};
    \end{tikzpicture}
    \xRightarrow{\textcolor{red}{\varepsilon_\invamp}}
    \begin{tikzpicture}[baseline=(current bounding box.center)]
      \node[comult, dot=black] (c) {};
      \node[comult, anchor=bottom] (m) at (c.leftleg) {};
      \idstack[anchor=c.rightleg, direction=up]{1}
    \end{tikzpicture}
  \end{equation*}
\end{definition}

\begin{definition}
  The \emph{invertibly linear distributive presentation} $\mathcal{D}$ is
  obtained by adding inverses to the linear distributor $2$-morphisms in
  $\mathcal{F}^*$.
\end{definition}

\noindent
This is simpler to work with, and is equivalent to the data of
\Cref{autonomous} by bending the open leg of $\xusebox{tinymonoid-black}$
with $\tinymorphism{frobcup}$.

Recall that an autonomous category is precisely a $*$-autonomous category
which has invertible linear distributors.
Here we derive a white \enquote{Frobenius} $2$-morphism, from $\RTr$ and
$\RTrPar$ (equivalently, $\LTrTensor$), and find two equations we would like it
to satisfy. For brevity, our aim is to show that $\delta_R$ inverts, but for
$\delta_L$ the symmetric \enquote{Frobenius} $2$-morphism and associated
conditions are required.

\begin{definition}
  \begin{equation}\label{eq:white-frobenius}\tag{$\xusebox{tinymonoid-white}$-Frob}
    \resizebox{\textwidth}{!}{$
        \begin{tikzpicture}[baseline=(current bounding box.center)]
          \node[comult] (c) {};
          \node[mult, anchor=leftleg] (m) at (c.rightleg) {};
          \idstack[anchor=c.leftleg, direction=up]{1}
          \idstack[anchor=m.rightleg, direction=down]{1}
          \node[draw=red, dashed, fit={(id1.top) (id1.bottom)}]{};
        \end{tikzpicture}
        \cong
        \begin{tikzpicture}[baseline=(current bounding box.center)]
          \node[comult] (c) {};
          \idstack[anchor=c.bottom, direction=down]{1}
          \node[mult, anchor=leftleg] (m) at (c.rightleg) {};
          \idstack[anchor=m.top, direction=up]{2}
          \idstack[anchor=c.leftleg, direction=up]{3}
          \node[twfrobcup, anchor=leftleg] (cup) at (m.rightleg) {};
          \idstack[anchor=cup.rightleg, direction=up]{1}
          \node[twfrobcap, anchor=leftleg] (cap) at (id1.top) {};
          \node[draw=red, dashed, fit={(m.top) (id1.top)}]{};
          \idstack[anchor=cap.rightleg, direction=down]{3}
        \end{tikzpicture}
        \xRightarrow{\textcolor{red}{\eta_\invamp}}
        \begin{tikzpicture}[baseline=(current bounding box.center)]
          \node[comult] (c) {};
          \idstack[anchor=c.bottom, direction=down]{1}
          \node[mult, anchor=leftleg] (m) at (c.rightleg) {};
          \node[mult, dot=black, anchor=leftleg, span=1.5] (m') at (m.top) {};
          \node[comult, anchor=bottom] (c') at (m'.top) {};
          \idstack[anchor=c'.leftleg, direction=up]{2}
          \draw (c.leftleg) -- (c.leftleg |- id2.top);
          \node[twfrobcup, anchor=leftleg] (cup) at (m.rightleg) {};
          \idstack[anchor=cup.rightleg, direction=up]{1}
          \node[twfrobcap, anchor=leftleg] (cap) at (c'.rightleg) {};
          \node[draw=red, dashed, fit={(m'.north) (m'.rightleg) (m.leftleg) (cup.bottom)}]{};
          \idstack[anchor=cap.rightleg, direction=down]{5}
        \end{tikzpicture}
        \xRightarrow{\textcolor{red}{\RTr}}
        \begin{tikzpicture}[baseline=(current bounding box.center)]
          \node[comult] (c) {};
          \node[comult, anchor=bottom] (c') at (c.rightleg) {};
          \node[twfrobcap, anchor=leftleg] (cap) at (c'.rightleg) {};
          \idstack[anchor=c.leftleg, direction=up]{3}
          \idstack[anchor=c'.leftleg, direction=up]{2}
          \idstack[anchor=cap.rightleg, direction=down]{2}
          \node[draw=red, dashed, fit={(c.leftleg) (c.bottom) (c'.rightleg)}]{};
        \end{tikzpicture}
        \overset{\textcolor{red}{\alpha}}{\cong}
        \begin{tikzpicture}[baseline=(current bounding box.center)]
          \node[comult] (c) {};
          \idstack[anchor=c.bottom, direction=down]{1}
          \node[comult, anchor=leftleg] (c') at (id1.bottom) {};
          \node[twfrobcap, anchor=leftleg] (cap) at (c'.rightleg) {};
          \idstack[anchor=cap.rightleg, direction=down]{1}
          \node[draw=red, dashed, fit={(c'.bottom) (id1.bottom)}]{};
        \end{tikzpicture}
        \xRightarrow{\textcolor{red}{\eta_\otimes}}
        \begin{tikzpicture}[baseline=(current bounding box.center)]
          \node[comult] (c) {};
          \idstack[anchor=c.bottom, direction=down]{1}
          \node[comult, anchor=leftleg] (c') at (id1.bottom) {};
          \node[twfrobcap, anchor=leftleg] (cap) at (c'.rightleg) {};
          \idstack[anchor=cap.rightleg, direction=down]{1}
          \node[comult, dot=black, anchor=leftleg, span=1.5] (bc) at (c'.bottom) {};
          \node[mult, anchor=top] (bm) at (bc.bottom) {};
          \node[draw=red, dashed, fit={(c'.leftleg) (cap.top) (cup.rightleg) (bc)}]{};
        \end{tikzpicture}
        \xRightarrow{\textcolor{red}{\RTrPar}}
        \begin{tikzpicture}[baseline=(current bounding box.center)]
          \node[mult] (m) {};
          \node[comult, anchor=bottom] (c) at (m.top) {};
        \end{tikzpicture}
      $}
  \end{equation}
\end{definition}

This is a first step towards a non-symmetric version of the result of
\textcite{hajgatoTracedAutonomousCategories2013}, that every traced symmetric
$*$-autonomous category is autonomous.

  \begin{xlrbox}{epsilon-sup-1}
    \begin{tikzpicture}[baseline=(current bounding box.center)]
      \node[comult] (c) {};
      \node[mult, dot=black, anchor=leftleg] (m) at (c.leftleg) {};
      \node[identity, right=\cobwidth+\cobgap of c] (i) {};
      \node[identity, anchor=bottom] at (i.top) {};
      \node[draw=red, dashed, fit={(c.leftleg) (m.rightleg) (m.top) (c.bottom)}]{};
      \node[draw=blue, dashed, fit={(c.rightleg) (i.top)}]{};
    \end{tikzpicture}
  \end{xlrbox}

  \begin{xlrbox}{epsilon-sup-2}
    \begin{tikzpicture}[baseline=(current bounding box.center)]
      \node[comult] (c) {};
      \node[mult, anchor=leftleg] (m') at (c.rightleg) {};
      \node[comult, dot=black, anchor=bottom] (c') at (m'.top) {};
      \node[mult, dot=black, anchor=rightleg] (m) at (c'.leftleg) {};
      \idstack[anchor=m'.rightleg, direction=down]{1}
      \idstack[anchor=c'.rightleg, direction=up]{1}
      \node[draw=red, dashed, fit={(c'.south) (c'.rightleg) (m.leftleg) (m.top)}]{};
      \node[draw=blue, dashed, fit={(c.bottom) (c.leftleg) (m'.rightleg) (m'.north)}]{};
      \idstack[anchor=m.leftleg, direction=down]{2}
    \end{tikzpicture}
  \end{xlrbox}

  \begin{xlrbox}{epsilon-sup-3}
    \begin{tikzpicture}[baseline=(current bounding box.center)]
      \node[comult] (c) {};
      \node[mult, dot=black, anchor=leftleg] (m) at (c.leftleg) {};
      \node[comult, anchor=bottom] (c') at (m.top) {};
      \node[mult, dot=black, anchor=top] (m') at (c.bottom) {};
      \node[draw=red, dashed, fit={(c.leftleg) (m.rightleg) (m.north) (c.south)}]{};
    \end{tikzpicture}
  \end{xlrbox}

  \begin{xlrbox}{epsilon-prime-sup-1}
    \begin{tikzpicture}[baseline=(current bounding box.center)]
      \node[comult, dot=black] (c) {};
      \node[mult, anchor=leftleg] (m) at (c.leftleg) {};
      \node[identity, left=\cobwidth+\cobgap of c] (i) {};
      \node[identity, anchor=bottom] at (i.top) {};
      \node[draw=red, dashed, fit={(c.leftleg) (m.rightleg) (m.top) (c.bottom)}]{};
      \node[draw=blue, dashed, fit={(c.leftleg) (i.top)}]{};
    \end{tikzpicture}
  \end{xlrbox}

  \begin{xlrbox}{epsilon-prime-sup-2}
    \begin{tikzpicture}[baseline=(current bounding box.center)]
      \node[comult, dot=black] (c) {};
      \node[mult, dot=black, anchor=rightleg] (m') at (c.leftleg) {};
      \node[comult, anchor=bottom] (c') at (m'.top) {};
      \node[mult, anchor=leftleg] (m) at (c'.rightleg) {};
      \idstack[anchor=m'.leftleg, direction=down]{1}
      \idstack[anchor=c'.leftleg, direction=up]{1}
      \node[draw=red, dashed, fit={(c'.south) (c'.leftleg) (m.rightleg) (m.top)}]{};
      \node[draw=blue, dashed, fit={(c.bottom) (c.rightleg) (m'.leftleg) (m'.north)}]{};
      \idstack[anchor=m.rightleg, direction=down]{2}
    \end{tikzpicture}
  \end{xlrbox}

  \begin{xlrbox}{epsilon-prime-sup-3}
    \begin{tikzpicture}[baseline=(current bounding box.center)]
      \node[comult, dot=black] (c) {};
      \node[mult, anchor=leftleg] (m) at (c.leftleg) {};
      \node[comult, dot=black, anchor=bottom] (c') at (m.top) {};
      \node[mult, anchor=top] (m') at (c.bottom) {};
      \node[draw=red, dashed, fit={(c.leftleg) (m.rightleg) (m.north) (c.south)}]{};
    \end{tikzpicture}
  \end{xlrbox}

\begin{proposition}\label[proposition]{delta-inverts}
  Any Cauchy complete left and right $\otimes$-traced $*$-autonomous category
  for which the following equations, along with their symmetric analogues, hold
  is autonomous:

  \begin{minipage}{0.5\textwidth}
    \begin{equation}
        \begin{tikzcd}[math mode=false, sep=tiny, row sep=-25pt, ampersand replacement=\&]
          \xusebox{epsilon-prime-sup-1}
          \arrow[rr, Rightarrow, "\textcolor{red}{$\varepsilon_\otimes$}"]
          \arrow[rd, Rightarrow, "\textcolor{blue}{$\eta_\invamp$}"']
          \&\&
          \xusebox{double-tallid}
          \arrow[rr, Rightarrow, "$\eta_\otimes$"]
          \&\&
          \xusebox{eta-prime-target}
          \\
          \&
          \xusebox{epsilon-prime-sup-2}
          \arrow[rr, Rightarrow, "\textcolor{red}{\labelcref{eq:white-frobenius}}", "\textcolor{blue}{\labelcref{eq:frobenius}}"']
          \&\&
          \xusebox{epsilon-prime-sup-3}
          \arrow[ru, Rightarrow, "\textcolor{red}{$\varepsilon_\otimes$}"', end anchor={south west}]
          \&
        \end{tikzcd}
        \label{eq:epsilon-prime-superposing}\tag{$\varepsilon_\otimes$-sup}
    \end{equation}
    \end{minipage}
    \begin{minipage}{0.49\textwidth}
    \begin{equation}
        \begin{tikzcd}[math mode=false, sep=tiny, row sep=-25pt, ampersand replacement=\&]
          \xusebox{epsilon-sup-1}
          \arrow[rr, Rightarrow, "\textcolor{red}{$\varepsilon_\invamp$}"]
          \arrow[rd, Rightarrow, "\textcolor{blue}{$\eta_\otimes$}"']
          \&\&
          \xusebox{double-tallid}
          \arrow[rr, Rightarrow, "$\eta_\invamp$"]
          \&\&
          \xusebox{eta-target}
          \\
          \&
          \xusebox{epsilon-sup-2}
          \arrow[rr, Rightarrow, "\textcolor{red}{\labelcref{eq:frobenius}}", "\textcolor{blue}{\labelcref{eq:white-frobenius}}"']
          \&\&
          \xusebox{epsilon-sup-3}
          \arrow[ru, Rightarrow, "\textcolor{red}{$\varepsilon_\invamp$}"', end anchor={south west}]
          \&
        \end{tikzcd}
        \label{eq:epsilon-superposing}\tag{$\varepsilon_\invamp$-sup}
    \end{equation}
    \end{minipage}
\end{proposition}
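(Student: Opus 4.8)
The plan is to reduce the statement to invertibility of the linear distributors and then to manufacture their inverses from the trace. By the discussion following the definition of $\mathcal{D}$, the invertibly linear distributive presentation $\mathcal{D}$ is equivalent to the autonomous presentation $\mathcal{A}$ of \Cref{autonomous}, whose interpretations in $\Prof$ are exactly Cauchy complete autonomous categories; so it suffices to show that, under the stated hypotheses, the linear distributors $\delta_L$ and $\delta_R$ of the given traced $*$-autonomous category are invertible. I would treat $\delta_R$ in detail, $\delta_L$ being handled verbatim by the symmetric analogues referred to in the statement. Observe first that, since the category is both left and right $\otimes$-traced, \Cref{ltr-rtr} supplies a right $\invamp$-trace $\RTrPar$ alongside the right $\otimes$-trace $\RTr$, and these are exactly the data feeding the derived white \enquote{Frobenius} $2$-morphism \eqref{eq:white-frobenius}.

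The candidate inverse $\delta_R^{-1}$ is read off from \eqref{eq:white-frobenius}: bending the relevant free leg with the Frobenius cup and cap, just as in \Cref{rotate-right-trace} and in the identification of $\mathcal{D}$ with $\mathcal{A}$, turns the white Frobenius map into a $2$-morphism with exactly the source and target needed to invert $\delta_R$. Its well-definedness --- in particular independence of the order in which $\RTr$ and $\RTrPar$ are applied inside \eqref{eq:white-frobenius} --- is automatic from coherence for $\mathcal{F}^*$: the $1$-morphisms occurring as sources there are connected and acyclic as string diagrams, so any two parallel $2$-morphisms between them agree.

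It then remains to check that the two composites $\delta_R \circ \delta_R^{-1}$ and $\delta_R^{-1}\circ\delta_R$ each equal the appropriate identity $2$-morphism. I would expand each as an internal string diagram; each contains two occurrences of the trace, one from $\delta_R^{-1}$ itself and one nested inside the white Frobenius map. Applying the superposing axiom \eqref{eq:rtr-superposing} to slide a trace past a tensor factor, the vanishing axioms \eqref{eq:rtr-vanishing-identity} and \eqref{eq:rtr-vanishing-tensor} to contract the resulting doubled feedback loop, and the naturality of $\Tr$ as a $2$-morphism of profunctors --- which, as the paper notes, already discharges the sliding and dinaturality axioms --- the two loops collapse to one and then disappear; every remaining rearrangement of black multiplications, comultiplications, units, counits, and of the Frobenius isomorphism \eqref{eq:frobenius}, is an equality of parallel $2$-morphisms with connected acyclic source, hence holds by coherence for $\mathcal{F}^*$. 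After this normalisation one composite reduces to precisely the diagram asserted by \eqref{eq:epsilon-prime-superposing}, the compatibility of $\varepsilon_\otimes$, \eqref{eq:frobenius}, and the white Frobenius map, and the other reduces to \eqref{eq:epsilon-superposing}, its counterpart for $\varepsilon_\invamp$. Thus the two hypothesised equations (and their symmetric analogues, which handle $\delta_L$) are exactly what the argument needs; with $\delta_L$ and $\delta_R$ now invertible, the interpretation of $\mathcal{F}^*$ extends along $\mathcal{D}\simeq\mathcal{A}$, so the category is autonomous.

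The step I expect to be the real obstacle is the middle one: checking that, once the doubled trace has been contracted, the leftover diagram genuinely coincides with the source/target configuration of \eqref{eq:epsilon-prime-superposing} (resp.\ \eqref{eq:epsilon-superposing}), rather than with something merely coherently isomorphic to it through a $2$-morphism that secretly reintroduces a non-trivial distributor. Getting the string orientations, the placement of the braiding appearing in \eqref{eq:white-frobenius}, and the direction of each counit to line up is the delicate part; I would carry it out entirely in the internal string diagram calculus, normalising with the \enquote{morphisms move freely through boundary circles} rule (\Cref{fig:internalstring}(c)) and relying on coherence for $\mathcal{F}^*$ to avoid tracking associators and unitors by hand.
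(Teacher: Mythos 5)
Your overall strategy is the one the paper sets up: reduce autonomy to invertibility of the linear distributors, take the white Frobenius $2$-morphism \eqref{eq:white-frobenius} as the germ of the candidate inverse to $\delta_R$ (which is exactly why both a left and a right $\otimes$-trace are hypothesised, since $\RTrPar$ comes from $\LTrTensor$ via the symmetric analogue of \Cref{ltr-rtr}), and use the two hypothesised superposing equations to verify the two inverse laws. But there is a genuine gap in how you propose to discharge the verification, and it concerns coherence. The coherence theorem quoted for $\mathcal{F}^*$ applies to parallel $2$-morphisms of the Frobenius presentation whose common source $1$-morphism is connected and \emph{acyclic}. The diagrams at the heart of your computation contain the generators $\RTr$ and $\RTrPar$, which are not part of $\mathcal{F}^*$ at all, and their sources contain feedback loops, which are cycles; so you cannot lean on coherence either to establish \enquote{well-definedness} of \eqref{eq:white-frobenius} (a non-issue anyway, since it is a definition --- a single specified composite) or to absorb the \enquote{remaining rearrangements} mid-computation. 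Every step touching a traced loop must be an explicit instance of \eqref{eq:rtr-superposing}, \eqref{eq:rtr-vanishing-identity}, \eqref{eq:rtr-vanishing-tensor}, naturality of the trace, or one of the hypothesised equations; coherence may only be applied to acyclic Frobenius-only subdiagrams.

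The second problem is that the decisive computation is asserted rather than carried out. Your claim that $\delta_R \circ \delta_R^{-1}$ normalises to an instance of \eqref{eq:epsilon-prime-superposing} and $\delta_R^{-1} \circ \delta_R$ to one of \eqref{eq:epsilon-superposing} is a guess about how the two hypotheses are consumed. What those equations actually assert is compatibility of the adjunction counits $\varepsilon_\otimes$ and $\varepsilon_\invamp$ with \eqref{eq:white-frobenius} and \eqref{eq:frobenius}; since $\delta_R$ is itself the composite $\eta_\invamp$ then \eqref{eq:frobenius} then $\varepsilon_\invamp$, the verification has to thread the white Frobenius map through each of those three stages, and it is precisely at that interchange that the hypotheses enter. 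You have not shown that after contracting the traced loops the residue lands on the nose in the source/target configuration of those equations, rather than somewhere merely coherently related to it --- a worry you flag yourself. As it stands this is a correct plan with the central diagram chase missing.
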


\clearpage
\printbibliography
\label{sec:bibliography}

\clearpage

\end{document}